\documentclass[a4paper,12pt,reqno]{amsart}

\usepackage{amsthm}
\usepackage{amsmath}
\usepackage{amssymb}
\usepackage{mathrsfs}
\usepackage{latexsym}
\usepackage{exscale}
\usepackage{geometry} 
\usepackage{graphicx} 
\usepackage[utf8]{inputenc}
\usepackage{verbatim}
\usepackage{enumerate} 
\usepackage{ esint } 
\usepackage{fancyhdr}

\usepackage{color}
\definecolor{red}{rgb}{1,0.1,0.1}
\definecolor{blue}{rgb}{0.1,0.1,1}
\definecolor{vb}{RGB}{160,32,240}

\usepackage[colorlinks=true, pdfstartview=FitV, linkcolor=blue, citecolor=red, urlcolor=blue]{hyperref}

\theoremstyle{plain}
\DeclareMathOperator{\esssup}{ess sup}

\numberwithin{equation}{section}
\newtheorem{teo}{Theorem}[section]
\newtheorem{lemma}[teo]{Lemma}
\newtheorem{cor}[teo]{Corollary}
\newtheorem{prop}[teo]{Proposition}

\theoremstyle{remark}
\newtheorem{remark}[teo]{Remark}

\theoremstyle{definition}

\newtheorem*{mydef*}{Definition}

\headheight=8pt
\topmargin=0pt
\textheight=659pt
\textwidth=436pt
\oddsidemargin=10pt
\evensidemargin=10pt

\calclayout

\allowdisplaybreaks

\newcommand{\R}{\mathbb{R}^n}

\begin{document}

\title[Mixed weak type inequalities]{Mixed weak type inequalities in euclidean spaces and in spaces of homogeneous type}

\author[Gonzalo~Iba\~{n}ez-Firnkorn]{Gonzalo Iba\~{n}ez-Firnkorn}
\address{(Gonzalo~Iba\~{n}ez-Firnkorn) INMABB (CONICET) \\ Bahía Blanca, Argentina}
\email{gonzaibafirn@gmail.com}

\author{Israel P. Rivera-Ríos}
\address{(Israel P. Rivera-Ríos) Departamento de Análisis Matemático, Estadística e Investigación Operativa
y Matemática Aplicada. Facultad de Ciencias. Universidad de Málaga
\\ Málaga, Spain. Departamento de Matemática. Universidad Nacional
del Sur \\ Bahía Blanca, Argentina}
\email{israelpriverarios@uma.es}

\thanks{The first author was partially supported by CONICET and SECYT-UNC.
The second author was partially supported by FONCyT PICT 2018-02501 and PICT 2019-00018 and by Junta de Andaluc\'ia UMA18FEDERJA002.}

\subjclass[2010]{42B20, 42B25}

\keywords{Commutators, H\"ormander
operators, Mixed weighted inequalities.
}

\begin{abstract}
In this paper we provide mixed weak type inequalities generalizing previous results in an earlier work by Caldarelli and the second author \cite{CRR20} and also in the spirit of earlier results by Lorente, Martell, P\'erez and Riveros \cite{LMPR}. One of the main novelties is that, besides obtaining estimates in the euclidean setting, results are provided as well in spaces of homogeneous type, being the first mixed weak type estimates that we are aware of in that setting.\end{abstract}

\maketitle
\section{Introduction}

In \cite{MW77}, Muckenhoupt and Wheeden, introduced a new type of weak type inequality, that consists in considering a perturbation of the Hardy-Littlewood maximal operator, $M$, with an $A_p$ weight. The result was the following, if $w\in A_1$ then
\begin{equation*} 
\left|\left\{ x \in \mathbb{R}^n : w(x)Mf(x)>\lambda\right\}\right|
\leq c_{w} \int_{\mathbb{R}^n} \frac{|f(x)|}{\lambda} w(x)dx.
\end{equation*}

Inequalities with this kind of perturbation, and some further ones that we describe in the following lines, are known  in the literature as mixed weak-type inequalities. 

In \cite{S85}, having as an application an alternative proof to Muckenhoupt's $A_p$ theorem, Sawyer settled the following result. If $u,v\in A_1$ then

\begin{equation}
uv\left(\left\{ x \in \mathbb{R}^n : \frac{M(fv)(x)}{v(x)}>\lambda\right\}\right)
\leq  c_{u,v} \int_{\mathbb{R}^n} \frac{|f(x)|}{\lambda} u(x)v(x)dx.
\end{equation}

A further extension of this inequality was provided in \cite{CUMP05}. More precisely it was shown that is $u$ and $v$ satisfy either $u,v\in A_1$ or $u\in A_1$ and $v\in A_{\infty}(u)$, then the inequality
\begin{equation}
uv\left(\left\{ x \in \mathbb{R}^n : \frac{\mathcal{T}(fv)(x)}{v(x)}>\lambda\right\}\right)
\leq c_n c_{u,v} \int_{\mathbb{R}^n} \frac{|f(x)|}{\lambda} u(x)v(x)dx
\end{equation}
holds for every $t$ where $\mathcal{T}$ is either $M$ or a Calderón-Zygmund operator. Note that the assumption $u,v\in A_1$ is weaker, in the sense that the product $uv$ does not necessarily have any regularity, in contrast to what happens with the assumption $u\in A_1$ and $v\in A_\infty(u)$ for which $uv\in A_\infty$. It was also conjectured in \cite{CUMP05} that the assumption $u,v\in A_1$ could be further weakened to $u\in A_1$ and $v\in A_\infty$. That conjecture was solved in the positive in \cite{LOP19}.

Over the past few years, there have been some contributions on mixed weak-type inequalities such as \cite{BCP19} for the case of fractional integral and related operators, \cite{B19} for generalized maximal functions, \cite{OP16, OPR16, CRR20} for related quantitative estimates and \cite{LOPi19} for multilinear extensions. In \cite{BCP19} results for the commutator of Calder\'on Zygmund operators and H\"ormander type operators were proved and in \cite{CRR20} the authors provided quantitative estimates for commutators of Calder\'on-Zygmund operators and rough singular integral operators via sparse domination. 

In this paper we aim to provide some mixed weak type estimates both in the euclidean setting and in spaces of homogeneous type being the results in the latter setting the first ones in such a degree of generality that we are aware of. 
Also our results can be regarded as a  revisit of certain results in \cite{LMPR}.
In both cases, our approach will rely upon sparse domination, pushing forward ideas in  \cite{CRR20}. We describe our contribution in the following subsections. 

\subsection{Results in the euclidean setting}

Our first result is concerned with operators satisfying a bilinear sparse domination result.
Given $A$ a Young function (see Subsection \ref{subsec:Young} for the precise definition), we assume that $T$ admits the following bilinear sparse bound
\begin{equation}\label{bilinearsparseT}
\int T(f)g \leq c_n \sum_{j=1}^{3^n}\sum_{Q\in \mathcal{S}_j} \|f\|_{1,Q}\|g\|_{A,Q}|Q|
\end{equation}
where $\mathcal{S}_j$ are dyadic sparse families.

\begin{teo}\label{MixtaT} Let $1\leq p, r <\infty$. Let $u\in A_1\cap RH_{q}$ with $q=2r-1$  and $v\in A_p(u)$. Let $A$ be a Young function such that  $A \in B_{\rho}$ for all $\rho> r$ and $T$ be an operator that satisfies \eqref{bilinearsparseT}.
Then,

$$uv\left(\left\{x \in \R : \frac{T(fv)(x)}{v(x)}>\lambda\right\}\right)\leq c_{n,T} C_{u,v} \int_{\R} \frac{|f(x)|}{\lambda} u(x)v(x) dx, $$
where 
\begin{align*}
C_{uv} =  \kappa_{u} [u]_{RH_q}^{1+\frac{q}{4r}} [u]_{A_1}[uv]_{A_{\infty}}\log(e+\kappa_{u} [u]_{RH_q}^{1+\frac{q}{4r}} [u]_{A_1}[uv]_{A_{\infty}}[v]_{A_p(u)})
\end{align*} 

In the case of $A(t)=t^{r}(1 + \log^+t)^{\gamma}$ we have
$\kappa_u=[u^r]_{A_{\infty}}^{\frac{\gamma}{r}}\leq [u]_{RH_q}^{\frac{\gamma}{r}}[u]_{A_1}^{\frac{\gamma}{r}}$. 
\end{teo}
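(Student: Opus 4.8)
The plan is to reduce the mixed weak type estimate for $T$ to the corresponding estimate for the maximal operator $M_A$ (or the dyadic sparse operator associated with the Young function $A$) by exploiting the bilinear sparse bound \eqref{bilinearsparseT}, and then to run a Sawyer-type argument on the sparse side. First I would fix $\lambda>0$; by homogeneity it suffices to treat $\lambda=1$, and by the usual reduction I may assume $f\geq 0$. Writing $h=uv\cdot\chi_{E}$ for the level set $E=\{T(fv)/v>1\}$ and dualizing the weak type norm (using that $(uv)(E)=\int h\,(uv)^{-1}\,d(uv)$ after an appropriate normalization, or more directly testing against functions $g$ with $\|g\|_{L^\infty}\leq 1$ supported on $E$), I would apply \eqref{bilinearsparseT} to $fv$ and $g$ to get
\[
\int T(fv)\,g\;\leq\; c_n\sum_{j=1}^{3^n}\sum_{Q\in\mathcal S_j}\|fv\|_{1,Q}\,\|g\|_{A,Q}\,|Q|.
\]
The key point is then to absorb the factor $v$: one splits $\|fv\|_{1,Q}=\langle fv\rangle_Q$ and, on each sparse cube, compares $\langle fv\rangle_Q$ with $v(x)$ times a maximal average of $f$, while $\|g\|_{A,Q}$ is controlled using that $g$ is supported on $E$ and $A\in B_\rho$ for $\rho>r$.

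The heart of the matter is the weighted estimate on the sparse side. Following \cite{CRR20} and the Sawyer/CUMP/LOP circle of ideas, I would introduce the auxiliary operator $S f = (T(fv))/v$ and show that for $g\geq 0$ supported on $E$,
\[
\int_E (uv)\;\lesssim\; C_{u,v}\int_{\R}\frac{|f|}{\lambda}\,uv,
\]
by expanding $\langle fv\rangle_Q\,|Q|$ and using the $A_1$ property of $u$ together with the $RH_q$ condition, $q=2r-1$, to pass from averages of $v$ to pointwise values. The reverse Hölder exponent $q=2r-1$ is exactly what makes $u^r\in A_\infty$ with the stated constant $[u^r]_{A_\infty}\leq [u]_{RH_q}[u]_{A_1}$, which is needed to handle the $\|g\|_{A,Q}$-norm when $A(t)=t^r(1+\log^+t)^\gamma$, yielding the factor $\kappa_u=[u^r]_{A_\infty}^{\gamma/r}$. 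The $B_\rho$ hypothesis for every $\rho>r$ guarantees that $M_A$ is bounded on the relevant $L^\rho$ spaces with a controlled constant, which is what produces the power $1+\tfrac{q}{4r}$ on $[u]_{RH_q}$. Throughout, the $A_\infty$ constant $[uv]_{A_\infty}$ enters via a Carleson-embedding / sparse-summation lemma, and $[v]_{A_p(u)}$ only appears inside the logarithm, as in the statement.

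The main obstacle, as I see it, is the bookkeeping of constants: one must track how $[u]_{A_1}$, $[u]_{RH_q}$, $[uv]_{A_\infty}$ and $[v]_{A_p(u)}$ combine through each inequality so that the final dependence is exactly
\[
C_{u,v}=\kappa_u\,[u]_{RH_q}^{1+\frac{q}{4r}}[u]_{A_1}[uv]_{A_\infty}\log\!\big(e+\kappa_u[u]_{RH_q}^{1+\frac{q}{4r}}[u]_{A_1}[uv]_{A_\infty}[v]_{A_p(u)}\big),
\]
and in particular one needs the self-improvement of the reverse Hölder inequality (to know $q=2r-1$ suffices) and a precise quantitative form of the $L^\rho$ bound for $M_A$ under $A\in B_\rho$, $\rho>r$, with the blow-up as $\rho\to r^+$ calibrated so that optimizing $\rho$ (roughly $\rho\approx r+1/\log(\cdots)$, the standard trick behind the logarithmic factor) gives the claimed power. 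The passage from the euclidean statement to a space of homogeneous type is purely notational here, since everything is phrased via dyadic sparse families and Young-function averages. The final claim $\kappa_u=[u^r]_{A_\infty}^{\gamma/r}\leq[u]_{RH_q}^{\gamma/r}[u]_{A_1}^{\gamma/r}$ for $A(t)=t^r(1+\log^+t)^\gamma$ is then a direct consequence of the identity $M_A f\approx (M(f^r(1+\log^+f^{?})^{?}))^{1/r}$-type comparison together with the elementary estimate $[u^r]_{A_\infty}\leq[u]_{RH_q}[u]_{A_1}$, which I would prove by writing $\langle u^r\rangle_Q\leq[u]_{RH_q}\langle u\rangle_Q^r\leq[u]_{RH_q}[u]_{A_1}^r(\operatorname{ess\,inf}_Q u)^r$.
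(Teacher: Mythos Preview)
Your proposal has the right opening move---pair with a characteristic function and feed the pair into the bilinear sparse bound---but two of the mechanisms you describe are not the ones that actually make the argument close, and as written there are genuine gaps.

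First, the dualization. You test against $g$ supported on $E=\{T(fv)/v>1\}$ and apply \eqref{bilinearsparseT} to the pair $(fv,g)$. But $\int T(fv)\,g$ only dominates $v(E)$, not $uv(E)$. The paper pairs with $gu$, where $g=\chi_G$, so that on $G$ one has $T(fv)u>uv$ and hence $uv(G)\le \int T(fv)\,gu$; the sparse bound then produces $\|gu\|_{A,Q}$ on each cube, and it is this quantity that Lemma~\ref{promA} converts into $\kappa_u[u]_{RH_q}^{1+q/(4r)}\langle u\rangle_{Q}\langle \chi_G\rangle^u_{Q,s}$ with $s\sim r$. Your sketch never names this step. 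More importantly, the paper does \emph{not} work on the full level set $E$: it sets $G=E\setminus\{M_{uv}f>\tfrac12\}$ (the ``scheme'' in Section~\ref{sec:proofmain}). This is essential, because the endgame is a reabsorption: one proves $uv(G)\le C_{u,v}\int|f|uv+\tfrac12 uv(G)$ and then subtracts. The maximal set removed is handled directly by the weak type of $M_{uv}$. Without this splitting there is nothing to absorb and the argument does not terminate.

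Second, the logarithm. You attribute it to optimizing $\rho\approx r+1/\log(\cdots)$ in the $B_\rho$ bound for $M_A$. That is not the source here. The choice of exponent in Lemma~\ref{promA} is a \emph{fixed} $1+\varepsilon$ with $\varepsilon\simeq[u^r]_{A_\infty}^{-1}$, and it produces the power $[u]_{RH_q}^{1+q/(4r)}$, not a log. The logarithm comes from Lemma~\ref{cuentaSjk}: one partitions the sparse family into sets $\mathcal S_{j,k}$ according to the dyadic size of $\|f\|_{L^1(uv),Q}$ and of $\langle\chi_G\rangle^u_{Q,s}$, proves two competing bounds for $s_{j,k}$ (one $\lesssim 2^{-k}[uv]_{A_\infty}\int|f|uv$ via Lemma~\ref{CRRlema2}, the other $\lesssim 2^{-j}2^{k(sp-1)}[v]_{A_p(u)}uv(G)$ via $v\in A_p(u)$ and the weak type of $M_{uv}$), and then sums the minimum using Lemma~\ref{CRRlema1}. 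This min-trick is exactly where $[v]_{A_p(u)}$ enters inside the log and where the $\tfrac12 uv(G)$ reabsorption term appears. Your account of $v\in A_p(u)$ (``only appears inside the logarithm'') is correct as a conclusion but you have no mechanism producing it; in the paper it enters through the inequality $\frac{1}{u(Q)}\int_Q gu\le\bigl([v]_{A_p(u)}\tfrac{1}{uv(Q)}\int_Q guv\bigr)^{1/p}$ used in the second estimate for $s_{j,k}$.
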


Before presenting our next result we need some notation. Let ${\bf b}=(b_1,b_2, \dots, b_m)$ be a set of symbols with $b_i\in \text{Osc}_{\exp L^{r_i}}$, $i=1,\dots, m$. Let ${\bf b}=\sigma \cup \sigma'$ where $\sigma$ and $\sigma'$ are pairwise disjoint sets. We introduce the following notation
\[\begin{split}(b - {\bf{\lambda}})_{\sigma}&= \prod_{i\in \sigma} (b_i(x)-\lambda_i)\\
|b - {\bf{\lambda}}|_{\sigma}&= \prod_{i\in \sigma} |b_i(x)-\lambda_i|
\end{split}\]
where ${\bf \lambda}=(\lambda_1,\lambda_2, \dots, \lambda_m)$. 
Let $C_j(b)$ the family of all the subsets $\sigma$ of $b$ such that $\# \sigma =j$.  Observe that the cardinality of $C_j(b)$ is ${m}\choose{j}$. Also we denote 
$$\|{\bf b}\| = \prod_{i=1}^m \|b_i\|_{\text{Osc}_{\exp L^{r_i}}}.$$
We remit the reader to Subsection \ref{subsec:Young} for the definition of the spaces $\text{Osc}_{\exp L^{r_i}}$.

Having the notation above at our disposal we define $T_{\bf b}$ as follows
\[T_{\bf b}f(x)=[b_m,\dots[b_2,[b_1,T]]]f(x)\]
At this point we are in the position to present our next result. Let $A$ be a Young function. We consider $T$ an operator such that $T_{\bf b}$ satisfies the following bilinear sparse bound
\begin{equation}\label{bilinearsparseTb}
\int T_{\bf b}(f)g \leq c_n \sum_{j=1}^{3^n}\sum_{h=0}^m \sum_{\sigma \in C_h(b)}\sum_{Q\in \mathcal{S}_j} \|f|b-b_Q|_{\sigma}\|_{1,Q}\|g|b(x)-b_Q|_{\sigma'}\|_{A,Q}|Q|
\end{equation}
where $\mathcal{S}_j$ are dyadic sparse families. For such operators we have the following result.
\begin{teo}\label{multiSimb} Let $1\leq p, s <\infty$. Let $u\in A_1\cap RH_{q}$  with $q=2s-1$ and $v\in A_p(u)$. 
Let $m\in \mathbb{N}$, $r_i\geq 1$ for every $1\leq i \leq m$, $\frac1{r}=\sum_{i=1}^m\frac1{r_i}$ and ${\bf b}=(b_1,\dots, b_m)$ where $b_i\in \text{Osc}_{\exp L^{r_i}}$ for $1\leq i \leq m$.
Let $A$ and $B$ be Young functions such that $B^{-1}(t)\log (t)^{1/r}\lesssim {A}^{-1}(t)$ for all $t\geq e$ and  $B \in B_{\rho}$ for all $\rho > s$ and $T$ be an operator that satisfies \eqref{bilinearsparseTb}. 
  Then, 
$$uv\left(\left\{x \in \R : \frac{T_{\bf b}(fv)(x)}{v(x)}>\lambda\right\}\right)\leq c_{n,T} C_{u,v} \int_{\R} \varphi_{\frac{1}{r}}\left(\frac{|f(x)|\| {\bf b}\|}{\lambda}\right) u(x)v(x) dx, $$
where $\varphi_{\frac{1}{r}}(t)=t(1 + \log^+t)^{\frac{1}{r}}$  and
\begin{align*}
C_{uv} =  \sum_{h=0}^m \kappa_u [u]_{RH_{q}}^{1+\frac{q}{4s}} [u]_{A_1} [uv]_{A_{\infty}}^{1+\frac{m-h}{r}}\log(e+[uv]_{A_{\infty}}^{1+\frac{m-h}{r}} \kappa_u[u]_{RH_{q}}^{1+\frac{q}{4s}}[u]_{A_1}[v]_{A_p(u)})^{1+\frac1{r}}
\end{align*} 
In the case of $A(t)=t^{s}(1 + \log^+t)^{\gamma}$ we have
$\kappa_u=[u^s]_{A_{\infty}}^{\frac{\gamma}{s}}\leq [u]_{RH_q}^{\frac{\gamma}{s}}[u]_{A_1}^{\frac{\gamma}{s}}$.  
\end{teo}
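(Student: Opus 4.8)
The plan is to mirror the strategy used for Theorem \ref{MixtaT}, with the extra bookkeeping forced by the commutator structure in \eqref{bilinearsparseTb}. First I would linearise by duality: the weak-type estimate with respect to the measure $uv\,dx$ is equivalent to bounding $\int_{E} T_{\bf b}(fv)\,v^{-1}\,uv\,dx = \int_E T_{\bf b}(fv)\,u\,dx$ for an arbitrary measurable set $E$ of finite $uv$-measure, after the usual reduction to $f\geq 0$ and to $\lambda=1$. Inserting the sparse bound \eqref{bilinearsparseTb} with $g=\chi_E u$ one is left with controlling, for each of the finitely many $j$ and each pair $\sigma\cup\sigma'$ with $\#\sigma=h$,
\[
\sum_{Q\in\mathcal S_j}\big\|fv\,|b-b_Q|_\sigma\big\|_{1,Q}\,\big\|\chi_E u\,|b-b_Q|_{\sigma'}\big\|_{A,Q}\,|Q|.
\]
The key structural point, exactly as in \cite{CRR20} and in the proof of Theorem \ref{MixtaT}, is that $v\in A_p(u)$ guarantees $uv\in A_\infty$, so one may run a Calderón–Zygmund/principal-cube decomposition adapted to the measure $uv$: one selects the principal cubes where the $uv$-average of $fv/( \text{something})$ doubles, and the sparseness of $\mathcal S_j$ together with the $A_\infty$ property of $uv$ lets one sum the non-principal contributions geometrically, paying a factor $[uv]_{A_\infty}$ (this is where the $\log(e+\cdots[v]_{A_p(u)})$ and the $[uv]_{A_\infty}$ powers in $C_{uv}$ are generated).

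The commutator-specific work is to absorb the symbol factors $|b-b_Q|_\sigma$ and $|b-b_Q|_{\sigma'}$. On the $g$-side I would use the generalized Hölder inequality in the Orlicz norm $\|\cdot\|_{A,Q}$ together with the hypothesis $B^{-1}(t)\log(t)^{1/r}\lesssim A^{-1}(t)$: this is precisely the relation that lets one split $\|\chi_E u\,|b-b_Q|_{\sigma'}\|_{A,Q}$ into $\|\chi_E u\|_{B,Q}$ times a product of $\exp L^{r_i}$-norms of $b_i-b_Q$ over $i\in\sigma'$, each of which is $\lesssim\|b_i\|_{\mathrm{Osc}_{\exp L^{r_i}}}$ by the John–Nirenberg-type self-improvement built into the definition of $\mathrm{Osc}_{\exp L^{r_i}}$. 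Since $B\in B_\rho$ for all $\rho>s$ and $u\in A_1\cap RH_q$ with $q=2s-1$, the term $\|\chi_E u\|_{B,Q}$ is handled by the same computation as the $\|g\|_{A,Q}$ term in Theorem \ref{MixtaT} — this is the origin of the factor $\kappa_u[u]_{RH_q}^{1+q/(4s)}[u]_{A_1}$ and of the stated value of $\kappa_u$ in the model case $A(t)=t^s(1+\log^+t)^\gamma$. On the $f$-side, $\|fv\,|b-b_Q|_\sigma\|_{1,Q}=\frac1{|Q|}\int_Q fv\prod_{i\in\sigma}|b_i-b_Q|$; here I would again use generalized Hölder, now in $L^1$, to dominate this by (an average involving $\varphi_{1/r}(f\|{\bf b}\|)$ against the measure $uv$) times the corresponding $\exp L^{r_i}$ oscillation factors — the superlinear Young function $\varphi_{1/r}(t)=t(1+\log^+t)^{1/r}$ on the right-hand side of the theorem is exactly what is needed to absorb $\prod_{i\in\sigma}|b_i-b_Q|$ after one has passed to $uv$-averages, since $1/r=\sum 1/r_i$.

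Assembling these, each principal cube $Q$ contributes a term of the form $\big(\text{constant depending on }u,v\big)\cdot uv(Q)\cdot\big(\text{average of }\varphi_{1/r}(f\|{\bf b}\|)\text{ over }Q\big)$ times $\|{\bf b}\|$, and summing over the sparse principal family against the measure $uv$ produces the global bound $\int_{\R}\varphi_{1/r}(|f|\|{\bf b}\|/\lambda)\,u\,v$. The power $1+\tfrac{m-h}{r}$ on $[uv]_{A_\infty}$ and the exponent $1+\tfrac1r$ on the logarithm in $C_{uv}$ arise because each of the $m-h$ symbols placed on the $f$-side forces one additional iteration of the $uv$-adapted decomposition (each costing a factor $[uv]_{A_\infty}$ and a logarithmic loss), and then one sums over $h=0,\dots,m$ and over $\sigma\in C_h(b)$ — the number of such $\sigma$ being $\binom{m}{h}$, an absolute constant absorbed into $c_{n,T}$. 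The main obstacle I anticipate is precisely the quantitative control of these nested $A_\infty$ decompositions: one must track the $[uv]_{A_\infty}$ dependence carefully through each commutator level so that the final exponent is $1+\tfrac{m-h}{r}$ and not something worse, and one must verify that the interplay of $u\in A_1\cap RH_q$, $v\in A_p(u)$ and the Orlicz hypothesis on $A$ and $B$ is strong enough to keep $\|\chi_E u\|_{B,Q}$ under control uniformly in $Q$ with the stated dependence on $[u]_{RH_q}$, $[u]_{A_1}$ and $\kappa_u$. The rest is a careful but routine adaptation of the argument for Theorem \ref{MixtaT}.
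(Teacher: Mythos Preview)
Your Orlicz/H\"older handling of the symbols is right and matches the paper: split off $\prod_{i\in\sigma}\|b_i-(b_i)_Q\|_{\exp L^{r_i},Q}$ using $B^{-1}(t)\log(t)^{1/r}\lesssim A^{-1}(t)$, then bound $\|\chi_G u\|_{B,Q}$ via Lemma~\ref{promA}. But the engine that closes the estimate is not what you describe. There is no $L^{1,\infty}$ duality over arbitrary $E$; the paper (following the scheme of Section~3.1) takes the \emph{specific} set $G=\{T_{\bf b}(fv)/v>1\}\setminus\{M_{\varphi_{1/r}(uv)}f>\tfrac12\}$ and proves an \emph{absorption} inequality $uv(G)\le C\int\varphi_{1/r}(|f|\|{\bf b}\|)\,uv+\tfrac12 uv(G)$. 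The subtracted set is essential: it forces $\|f\|_{\varphi_{1/r}(uv),Q}\le\tfrac12$ on every cube that meets $G$, after which the sparse sum $\sum_{Q}\|f\|_{\varphi_{1/r}(uv),Q}\langle\chi_G\rangle^u_{Q,\xi}\,uv(Q)$ is handled by the two-parameter level-set decomposition $\mathcal S_{j,k}$ of Lemma~\ref{cuentaSjk}, not by a principal-cube/Carleson embedding. Each $s_{j,k}$ is bounded two ways---once via Lemma~\ref{CRRlema2} by $j^{1/r}\int\varphi_{1/r}(|f|)uv$, once via $v\in A_p(u)$ and the weak type of $M_{uv}$ by a multiple of $uv(G)$---and Lemma~\ref{CRRlema1} sums the minima, producing simultaneously the factor $[uv]_{A_\infty}\log(e+\cdots)^{1+1/r}$ and the residual $\tfrac1{2\gamma}uv(G)$ that gets absorbed. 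A principal-cube argument as you outline would leave a divergent sum over the $f$-level index.

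Your attribution of $[uv]_{A_\infty}^{(m-h)/r}$ is also off: it does \emph{not} come from iterating the decomposition once per symbol. It appears in a single step when, after using $u\in A_1$, generalized H\"older in the $uv$-measure turns $\|f|b-b_Q|_{\sigma'}\|_{L^1(uv),Q}$ into $\|f\|_{L\log L^{1/r}(uv),Q}\prod_{i\in\sigma'}\|b_i-(b_i)_Q\|_{\exp L^{r_i}(uv),Q}$, and Lemma~\ref{OscLr} (weighted John--Nirenberg) bounds each oscillation factor with a power of $[uv]_{A_\infty}$. After that, Lemma~\ref{cuentaSjk} is invoked exactly once, with $\rho=1/r$, $\tau_{u,v}=\kappa_u[u]_{RH_q}^{1+q/(4s)}[u]_{A_1}[uv]_{A_\infty}^{(m-h)/r}$ and $\gamma=3^n\binom{m}{h}$; there is no iteration.
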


The usual iterated commutator is just a particular case of the result above that consists just in assuming that all the symbols involved coincide. Let $A$ be a Young function, $m$ be a positive integer and $ b\in BMO$. We consider $T$ an operator such that the $m$-order iterated commutator $T_{b}^m$ satisfies the following bilinear sparse bound
\begin{equation}\label{bilinearsparsecomm}
\int T_b^m(f)g \leq c_n \sum_{j=1}^{3^n} \sum_{Q\in \mathcal{S}_j} \biggl[
 \||b-b_Q|^mfv \|_{1,Q}  \|gu\|_{A,Q} |Q|+   \|fv\|_{1,Q}\|gu|b-b_Q|^m\|_{A,Q} |Q| \biggr]
\end{equation}
where $\mathcal{S}_j$ are dyadic sparse families. Then we have the following result.
\begin{teo}\label{conmT} Let $1\leq p,s <\infty$. Let $u\in A_1\cap RH_{q}$  with $q=2s-1$ and $v\in A_p(u)$. 
Let $m\in \mathbb{N}$ and $b\in BMO$. 
Let $A$ and $B$ be  Young functions such that  $B^{-1}(t)\log (t)^{m}\lesssim {A}^{-1}(t)$ for all $t\geq e$ and  $B \in B_{\rho}$ for all $\rho > s$.
Let $T$ be an operator  that satisfies \eqref{bilinearsparsecomm}. Then,  
$$uv\left(\left\{x \in \R : \frac{T_{ b}^m(fv)(x)}{v(x)}>\lambda\right\}\right)\leq c_{n,T} C_{u,v} \int_{\R} \varphi_{m}\left(\frac{|f(x)|\|  b\|_{BMO}^m}{\lambda}\right) u(x)v(x) dx, $$
where $\varphi_{m}(t)=t(1 + \log^+t)^{m}$ and 

\begin{align*}
C_{uv} = &[u]_{RH_{q}}^{1+\frac{q}{4s}}\kappa_u[u]_{A_1} [uv]_{A_{\infty}}^{1+m}\log(e+ \kappa_u[u]_{A_1}[uv]_{A_{\infty}}^{1+m}[v]_{A_p(u)})^{1+m}
\\& +  [u]_{RH_{q}}^{1+\frac{q}{4s}}\kappa_u[u]_{A_1} [uv]_{A_{\infty}}\log(e+ \kappa_u[u]_{A_1}[uv]_{A_{\infty}}[v]_{A_p(u)}).
\end{align*}
\end{teo}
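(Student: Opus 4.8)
The plan is to reduce Theorem~\ref{conmT} to the more general Theorem~\ref{multiSimb}. Indeed, if we take $m$ equal symbols $b_1=\dots=b_m=b\in BMO$, then each $b_i\in \text{Osc}_{\exp L^1}$ with $\|b_i\|_{\text{Osc}_{\exp L^1}}\sim \|b\|_{BMO}$, so $r_i=1$ for all $i$ and $\frac1r=\sum_{i=1}^m\frac1{r_i}=m$. With this choice the sparse bound \eqref{bilinearsparseTb} collapses: the inner factors $|b-b_Q|_\sigma$ and $|b-b_Q|_{\sigma'}$ become $|b-b_Q|^h$ and $|b-b_Q|^{m-h}$, and summing over the $\binom{m}{h}$ sets $\sigma\in C_h(b)$ only produces harmless dimensional constants. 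So the first step is to check that \eqref{bilinearsparsecomm} implies (a constant multiple of) \eqref{bilinearsparseTb} in this degenerate case, up to the two ``extreme'' terms $h=0$ and $h=m$ that appear explicitly in \eqref{bilinearsparsecomm}; the intermediate terms $0<h<m$ are controlled by the extreme ones via Young's inequality / the generalized H\"older inequality in Orlicz spaces together with the John--Nirenberg estimate (this is the standard trick that lets one bound $\||b-b_Q|^h g\|_{A,Q}$ by a combination of $\|g\|$-type norms against larger Young functions), so no essential new content is needed there.

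Once the hypothesis is translated, I would simply invoke Theorem~\ref{multiSimb} with $r=1/m$, $s$ as given, and the same Young functions $A,B$: the hypothesis $B^{-1}(t)\log(t)^m\lesssim A^{-1}(t)$ is exactly the condition $B^{-1}(t)\log(t)^{1/r}\lesssim A^{-1}(t)$ once $1/r=m$, and likewise $B\in B_\rho$ for all $\rho>s$ is unchanged. The conclusion of Theorem~\ref{multiSimb} then reads with $\varphi_{1/r}=\varphi_m$ and $\|{\bf b}\|=\|b\|_{BMO}^m$ on the right-hand side, which is precisely what is claimed. The only point that requires a line of care is the shape of the constant $C_{uv}$: Theorem~\ref{multiSimb} gives a sum $\sum_{h=0}^m$ of terms of the form $\kappa_u[u]_{RH_q}^{1+q/(4s)}[u]_{A_1}[uv]_{A_\infty}^{1+(m-h)/r}\log(\cdots)^{1+1/r}$, and here $1/r=m$, so these exponents $1+(m-h)m$ range between $1$ (at $h=m$) and $1+m^2$ (at $h=0$). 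The two displayed terms in Theorem~\ref{conmT}, namely the one with $[uv]_{A_\infty}^{1+m}$ and log-power $1+m$, and the one with $[uv]_{A_\infty}$ and log-power $1$, dominate the whole sum because $A_\infty$ constants and the logarithmic factor are $\ge 1$; so I would absorb the $m+1$ terms into those two (the $h=m$ term gives the second summand, and all $h<m$ terms are $\le$ the $h=0$ term which gives the first summand, after noting $1+(m-h)/r\le 1+m$ once $r=1$ — wait, here $r=1/m$, so one must instead observe $1+(m-h)/r = 1+(m-h)m \le 1+m^2$, and the stated $1+m$ exponent in Theorem~\ref{conmT} is presumably a cleaner bound valid after re-optimizing, or the statement intends $r_i=1$ hence $\|b_i\|_{\text{Osc}_{\exp L^1}}$ and the commutator estimate is run with the single-symbol sparse form directly).

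Given that subtlety, the cleaner route — and the one I would actually write — is \emph{not} to pass through Theorem~\ref{multiSimb} verbatim but to re-run its proof in the iterated-commutator case, exploiting that all symbols coincide so that the combinatorial sum over $\sigma\in C_h(b)$ never appears and one works directly from \eqref{bilinearsparsecomm}. Concretely: freeze $\lambda=1$ by homogeneity; use the Coifman--Fefferman / Calder\'on--Zygmund-type decomposition adapted to $uv\in A_\infty$ that underlies the $CRR20$ machinery; apply the bilinear sparse bound \eqref{bilinearsparsecomm} to $f v$ against a test function $g$ built from the level set and $uv$; on the ``commutator side'' use the generalized H\"older inequality in Orlicz spaces with the pair $(B, \text{exp}L^{1/m})$ governed by $B^{-1}(t)\log(t)^m\lesssim A^{-1}(t)$, together with the John--Nirenberg inequality to control $\| |b-b_Q|^m\|_{\exp L^{1/m},Q}\lesssim \|b\|_{BMO}^m$; and finally run the $B_\rho$/$RH_q$ bootstrap exactly as in Theorem~\ref{MixtaT} to extract the $\log$ factor and the powers of $[u]_{RH_q}$, $[u]_{A_1}$, $[uv]_{A_\infty}$. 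The $\varphi_m$ on the right arises because, unlike the non-commutator case, the rearrangement of the level set produces an $L\log L$-type loss of order $m$; this is where the $\log^+$ in $\varphi_m$ comes from.

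The main obstacle I anticipate is precisely the bookkeeping in that last step: tracking how the $m$-fold logarithmic loss from the commutator interacts with the $B_\rho$ constant of $B$ and with the reverse-H\"older exponent $q=2s-1$, so that the final constant genuinely closes in the stated form (two summands, with log-powers $1+m$ and $1$ respectively). Everything else — the reduction of \eqref{bilinearsparsecomm}, the Orlicz H\"older step, the John--Nirenberg estimate — is routine and parallels \cite{CRR20} and \cite{LMPR}; the delicate quantitative part is making the constant $C_{uv}$ come out with exactly those exponents rather than something merely comparable up to $m$-dependent factors, and I would expect to need the elementary inequality $x\log(e+xy)\lesssim_\varepsilon x^{1+\varepsilon}\log(e+y)$ (or its $(1+\log^+)^m$ analogue) at the right moment to move a stray power of an $A_\infty$ constant out of the logarithm.
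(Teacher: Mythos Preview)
Your ``cleaner route'' --- re-running the proof of Theorem~\ref{multiSimb} directly on the two-term sparse bound \eqref{bilinearsparsecomm} --- is exactly what the paper does, so the overall strategy is correct and matches. But your write-up is muddled in two places.

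First, there are no intermediate terms $0<h<m$ in the hypothesis \eqref{bilinearsparsecomm}; that assumption already contains only the two extremes (all $|b-b_Q|^m$ on the $f$-side, or all on the $g$-side). Your paragraph about controlling intermediate terms via Young/H\"older is therefore irrelevant to the proof of the theorem as stated. The paper does record a reduction lemma (Lemma~\ref{lemaBMO}) showing that a full sparse bound with intermediate terms collapses to \eqref{bilinearsparsecomm}, but the mechanism there is the elementary pointwise inequality $|b(x)-b_Q|^{m-h}|b-b_Q|^{h}\le \max\{|b(x)-b_Q|,|b-b_Q|\}^{m}$, not an Orlicz--H\"older argument.

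Second, the constant $C_{u,v}$ comes out in the stated form for a transparent reason, not after any delicate bookkeeping or an $x\log(e+xy)\lesssim x^{1+\varepsilon}\log(e+y)$ trick. For the term with $|b-b_Q|^m$ on the $g$-side one uses the Orlicz--H\"older split $\|gu|b-b_Q|^m\|_{A,Q}\le \|gu\|_{B,Q}\,\||b-b_Q|^m\|_{\exp L^{1/m},Q}$ and \emph{unweighted} John--Nirenberg, so no $[uv]_{A_\infty}$ power appears; then Lemma~\ref{cuentaSjk} with $\rho=0$ gives the second summand (log-power $1$). For the term with $|b-b_Q|^m$ on the $f$-side one uses \emph{weighted} John--Nirenberg in one shot (Lemma~\ref{OscLr} with $j=m$, $r=1$), which produces exactly $[uv]_{A_\infty}^{m}$, and then Lemma~\ref{cuentaSjk} with $\rho=m$ gives the first summand (log-power $1+m$). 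The discrepancy you noticed with the $1+m^2$ exponent arises only if one artificially routes through Theorem~\ref{multiSimb} with $r_i=1$ and $1/r=m$; working directly from \eqref{bilinearsparsecomm} avoids it entirely.
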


\subsection{Results in spaces of homogeneous type}
We recall that $(X,d,\mu)$ is a space of homogeneous type if $X$ is
a set endowed with a quasi-metric $d$ and a doubling Borel measure
$\mu$. $d$ is a quasi-metric if there exists a constant
$\kappa_{d}\geq1$ such that
\[
d(x,y)\leq \kappa_d\left(d(x,z)+d(z,y)\right)\qquad x,y,z\in X,
\]
namely, if the triangle inequality holds modulo a constant.
Since $\mu$ satisfies the doubling property, we have that there exists $c_{\mu}\geq1$
such that 
\[
\mu(B(x,2\rho))\leq c_{\mu}\mu(B(x,\rho))\qquad x\in X,\quad\rho>0
\]
where $B(x,\rho):=\{y\in X\,:\,d(x,y)<\rho\}$. We will assume additionally
that all balls $B$ are Borel sets and that $0<\mu(B)<\infty$. 

Since $\mu$ is a Borel measure defined on the Borel $\sigma$-algebra
of the quasi-metric space $(X,d)$ we have that the Lebesgue differentiation theorem
holds. This yields that continuous functions with bounded support are dense
in $L^{p}(X)$ for every $1\leq p<\infty$. 

Let $A$ be a Young function. Let  $T$ be an operator satisfying the following bilinear sparse bound
\begin{equation}
\int T(f)g\leq c\frac{1}{1-\varepsilon}\sum_{j=1}^{l}\sum_{Q\in\mathcal{S}_{j}}\|f\|_{1,Q}\|g\|_{A,Q}\mu(Q)\label{bilinearsparseTETH}
\end{equation}
where $\mathcal{S}_{j}$ are $\varepsilon$-sparse families of dyadic cubes (see Subsection \ref{sec:dyadicSt} for the precise definition). Observe that in contrast with the results in the euclidean case, in this setting we need to be precise about the sparseness constant of the families involved since it plays a role in the proof. 
\begin{teo}\label{thm:Teth} Let $1\leq p,r<\infty$. Let $u\in A_{1}\cap RH_{q}$
with $q=2r-1$ and $v\in A_{p}(u)$. Let $A$ be a Young function
such that $A\in B_{\rho}$ for all $\rho>r$ and let $T$ be an operator
that satisfies (\ref{bilinearsparseTETH}). Then,
\[
uv\left(\left\{ x\in X:\frac{T(fv)(x)}{v(x)}>\lambda\right\} \right)\leq c_{X,T}C_{u,v}\int_{X}\frac{|f(x)|}{\lambda}u(x)v(x)d\mu(x),
\]
where
\begin{align*}
C_{uv}=\kappa_{u}[u]_{RH_{q}}^{1+\frac{q}{4r}}[u]_{A_{1}}[uv]_{A_{p}}[uv]_{A_{\infty}}\log(e+c_{n,p}\kappa_{u}[u]_{RH_{q}}^{1+\frac{q}{4r}}[u]_{A_{1}}[v]_{A_{p}(u)}[uv]_{A_{p}}^{3}) & .
\end{align*}
In the case of $A(t)=t^{r}(1+\log^{+}t)^{\gamma}$ we have $\kappa_{u}=[u^{r}]_{A_{\infty}}^{\frac{\gamma}{r}}\leq[u]_{RH_{q}}^{\frac{\gamma}{r}}[u]_{A_{1}}^{\frac{\gamma}{r}}$. 
\end{teo}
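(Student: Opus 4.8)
The plan is to follow the sparse-domination scheme of \cite{CRR20}, the point that needs real care being — as announced in the remark preceding the statement — the bookkeeping of the sparseness constant of the families $\mathcal{S}_j$, which deteriorates quantitatively when one passes from $\mu$ to $uv\,d\mu$.

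\textbf{Reductions and weight preliminaries.} First I would reduce, by homogeneity, to $\lambda=1$, and by density of continuous compactly supported functions in $L^1(X)$ together with monotone convergence, to $0\le f$ bounded with bounded support, each $\mathcal{S}_j$ finite, and $uv(\{T(fv)>v\})<\infty$. I would also collect the weight facts used below: the quantitative reverse Hölder inequality for $u\in A_1$ (responsible for the exponent $1+\tfrac q{4r}$ on $[u]_{RH_q}$, and, through a reverse Hölder estimate for $u^{r}$, for the factor $\kappa_u=[u^{r}]_{A_\infty}^{\gamma/r}$ in the logarithmic case $A(t)=t^{r}(1+\log^{+}t)^{\gamma}$); the fact that $u\in A_1$ and $v\in A_p(u)$ force $uv\in A_p(\mu)$ with $[uv]_{A_p}\lesssim[u]_{A_1}^{p}[v]_{A_p(u)}$, hence $uv\in A_\infty(\mu)$; the elementary consequence $\mu(E)/\mu(Q)\le[uv]_{A_p}^{1/p}\big((uv)(E)/(uv)(Q)\big)^{1/p}$ for $E\subseteq Q$; and the fact that a dyadic family that is $\varepsilon$-sparse for $\mu$ is $\varepsilon'$-sparse for $uv\,d\mu$, with the loss governed by $[uv]_{A_\infty}$.

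\textbf{The core estimate.} Setting $\Omega:=\{x\in X:T(fv)(x)>v(x)\}$ and observing that $uv<u\,T(fv)$ on $\Omega$ (where $T(fv)>v\ge0$ and $u>0$ a.e.), \eqref{bilinearsparseTETH} applied with $g=u\mathbf 1_\Omega$ gives
\[
uv(\Omega)\le\int_X T(fv)\,u\mathbf 1_\Omega\,d\mu\le\frac{c}{1-\varepsilon}\sum_{j=1}^{l}\sum_{Q\in\mathcal{S}_j}\|fv\|_{1,Q}\,\|u\mathbf 1_\Omega\|_{A,Q}\,\mu(Q).
\]
I would then estimate the bump by a generalized Hölder inequality $\|u\mathbf 1_\Omega\|_{A,Q}\lesssim\|u\|_{\Phi,Q}\,\|\mathbf 1_\Omega\|_{\Psi,Q}$ with $\Phi^{-1}\Psi^{-1}\lesssim A^{-1}$: on the $u$-factor I would choose $\Phi$ so that $\|u\|_{\Phi,Q}\lesssim\kappa_u[u]_{RH_q}^{1+\frac q{4r}}\|u\|_{1,Q}$ (quantitative reverse Hölder for $u$, and for $u^{r}$ to absorb the logarithmic part of $A$), while on the $\mathbf 1_\Omega$-factor the complementary $\Psi$ is comparable to a power — this is where $A\in B_\rho$ for some $\rho\in(r,q)$ enters, which is legitimate precisely because $q=2r-1>r$ (for $r=1$ one replaces $RH_1$ by the self-improvement $u\in RH_{1+\eta}$) — so that $\|\mathbf 1_\Omega\|_{\Psi,Q}\lesssim\big(\mu(Q\cap\Omega)/\mu(Q)\big)^{\vartheta}\lesssim[uv]_{A_p}^{\vartheta/p}\big((uv)(Q\cap\Omega)/(uv)(Q)\big)^{\vartheta/p}$ for some $\vartheta=\vartheta(r,q)>0$, the last step by the preliminaries. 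Combining with the pointwise inequality $\|u\|_{1,Q}\le[u]_{A_1}u(x)$, valid $\mu$-a.e.\ on $Q$, which yields $\|u\|_{1,Q}\|fv\|_{1,Q}\mu(Q)=\|u\|_{1,Q}\int_Q fv\,d\mu\le[u]_{A_1}\int_Q fuv\,d\mu$, and writing $\delta:=\vartheta/p$ and $\tilde a_Q:=(uv)(Q\cap\Omega)/(uv)(Q)$, the displayed inequality becomes
\[
uv(\Omega)\lesssim\frac{\kappa_u[u]_{RH_q}^{1+\frac q{4r}}[u]_{A_1}[uv]_{A_p}}{1-\varepsilon}\sum_{j=1}^{l}\sum_{Q\in\mathcal{S}_j}\tilde a_Q^{\,\delta}\int_Q fuv\,d\mu .
\]

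\textbf{The summation and the main obstacle.} It then remains to prove, for each $j$, a bound of the form $\sum_{Q\in\mathcal{S}_j}\tilde a_Q^{\,\delta}\int_Q fuv\,d\mu\lesssim[uv]_{A_\infty}\log(e+c_{n,p}\Lambda)\int_X fuv\,d\mu$ with $\Lambda=\kappa_u[u]_{RH_q}^{1+\frac q{4r}}[u]_{A_1}[v]_{A_p(u)}[uv]_{A_p}^{3}$, and this is the step where I expect the difficulty to concentrate. One cannot discard $\tilde a_Q^{\,\delta}$ — that would leave the false bound by $M_{uv}f$ in $L^1(uv)$ — so the gain it furnishes must be genuinely used: following \cite{CRR20}, I would split $\mathcal{S}_j$ according to the dyadic size of $\tilde a_Q$ and, within each layer, run a stopping-time decomposition with respect to $uv\,d\mu$ controlling the $uv$-averages of $f$, the cubes attached to each principal cube being handled by the $\varepsilon'$-sparseness from the preliminaries; the layers on which $\tilde a_Q$ is large are absorbed back into $uv(\Omega)$, and iterating over roughly $\log(e+c_{n,p}\Lambda)$ scales produces the logarithmic factor. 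Carrying out this last part while keeping the dependence on $\varepsilon$, on $[uv]_{A_\infty}$ and on $[uv]_{A_p}$ fully explicit — routine when the sparseness constant may be swallowed by a dimensional constant, as in Theorem~\ref{MixtaT}, but not so here — together with the verification of the quantitative bump estimate, is the technical heart of the proof; assembling the two displays (with $l=l(X)$ and the absolute constants absorbed into $c_{X,T}$) yields the stated inequality with constant $C_{u,v}$.
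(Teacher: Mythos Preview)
Your overall architecture --- sparse domination, quantitative reverse H\"older control of $\|u\chi_\Omega\|_{A,Q}$, then a summation with reabsorption --- matches the paper's, but your execution diverges from it at two structural points.

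First, the paper does not work with the full level set $\Omega=\{T(fv)>v\}$. Following the scheme spelled out in Section~3.1, it splits off $\{M_{uv}f>\tfrac12\}$ first (handled by the weak type of $M_{uv}$) and proves the reabsorption only for $G=\Omega\setminus\{M_{uv}f>\tfrac12\}$. This removal is not cosmetic: it guarantees $\langle f\rangle_{uv,Q}\le\tfrac12$ for every cube meeting $G$, which is exactly what enables the \emph{double} dyadic split of Lemma~\ref{lem:Reabsorcioneth} (in $\|f\|_{uv,Q}\sim 2^{-j}$ and in $\langle g\rangle^u_{c_dQ,s}\sim 2^{-k}$, $j,k\ge0$), after which Lemma~\ref{CRRlema1} delivers both the $\log$ factor and the $\tfrac12\,uv(G)$ term simultaneously. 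Your single split on $\tilde a_Q$ plus stopping times for $f$ is a different mechanism; it may close, but it is not the argument of \cite{CRR20}, and you have not indicated how the reabsorbed piece becomes a fraction of $uv(\Omega)$ without the $j$-index.

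Second, the paper does not factor $\|u\chi_G\|_{A,Q}$ by generalised H\"older. It uses $A(t)\lesssim\kappa_\varepsilon t^{r(1+\varepsilon)}$ directly together with Lemmas~\ref{RHsHom}--\ref{promAHom} to obtain $\|u\chi_G\|_{A,1Q}\lesssim\kappa_u[u]_{RH_q}^{1+q/4r}\langle u\rangle_{c_d^3Q,1}\,\langle\chi_G\rangle^u_{c_dQ,s}$ --- a $u$-weighted average of $\chi_G$, not a $\mu$- or $uv$-average. The subsequent passage from $M_u(g)$ to $M_{uv}(g)$ via $v\in A_p(u)$ inside Lemma~\ref{lem:Reabsorcioneth} is precisely what puts $[v]_{A_p(u)}$ (rather than only $[uv]_{A_p}$) inside the logarithm of $C_{u,v}$; your route from $\|\mathbf 1_\Omega\|_{\Psi,Q}$ to $\tilde a_Q^\delta$ through $[uv]_{A_p}$ alone would not reproduce that dependence. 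Finally, the paper pins down the sparseness parameter as $\varepsilon=\frac{16c[uv]_{A_p}}{1+16c[uv]_{A_p}}$ so that both $(1-\varepsilon)^{-1}\simeq[uv]_{A_p}$ and the hypothesis of Lemma~\ref{CRRlema2Hom} are met; this synchronisation is the source of the extra $[uv]_{A_p}$ factors relative to the euclidean Theorem~\ref{MixtaT}, and is the piece you flagged as the main obstacle.
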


In the case of iterated commutators we need to assume that
\begin{equation}\label{bilinearsparseCommETH}
\int T_{{\bf b}}(f)g\leq c\frac{1}{1-\varepsilon}\sum_{j=1}^{l}\sum_{h=0}^{m}\sum_{\sigma\in C_{h}(b)}\sum_{Q\in\mathcal{S}_{j}}\left(\int_{Q}f|b-b_{Q}|_{\sigma'}d\mu\right)\|g|b-b_{Q}|_{\sigma}\|_{A,Q}
\end{equation}
where $\mathcal{S}_{j}$ are $\varepsilon$-sparse families of dyadic cubes (see Subsection \ref{sec:dyadicSt} for the precise definition). Under this assumption we have the following result.
\begin{teo}\label{thm:bTeth} Let $1\leq p,s<\infty$. Let $u\in A_{1}\cap RH_{q}$
with $q=2s-1$ and $v\in A_{p}(u)$. Let $m\in\mathbb{N}$, $r_{i}\geq1$
for every $1\leq i\leq m$, $\frac{1}{r}=\sum_{i=1}^{m}\frac{1}{r_{i}}$
and ${\bf b}=(b_{1},\dots,b_{m})$ where $b_{i}\in\text{Osc}_{\exp L^{r_{i}}}$
for $1\leq i\leq m$. Let $A$ and $B$ be Young functions such that
$B^{-1}(t)\log(t)^{1/r}\lesssim A^{-1}(t)$ for all $t\geq e$
and $B\in B_{\rho}$ for all $\rho>s$ and $T$ be an operator that
satisfies (\ref{bilinearsparseCommETH}). Then, 
\[
uv\left(\left\{x\in X:\frac{T_{{\bf b}}(fv)(x)}{v(x)}>\lambda\right\}\right)\leq c_{n,T}C_{u,v}\int_{X}\varphi_{\frac{1}{r}}\left(\frac{|f(x)|\|{\bf b}\|}{\lambda}\right)u(x)v(x)dx,
\]
where $\varphi_{\frac{1}{r}}(t)=t(1+\log^{+}t)^{\frac{1}{r}}$ and
\begin{align*}
C_{u,v}=\sum_{h=0}^{m}\tau_{u,v,h}[uv]_{A_{\infty}}\log(e+\tau_{u,v,h}[uv]_{A_{p}}^{2}[v]_{A_{p}(u)})^{1+\frac{1}{r}}
\end{align*}
where $\tau_{u,v,h}=\kappa_{u}[u]_{RH_{q}}^{1+\frac{q}{4s}}[u]_{A_{1}}\left([uv]_{A_{p}}[uv]_{A_{\infty}}\right)^{\frac{m-h}{r}}[uv]_{A_{p}}$.
In the case $A(t)=t^{s}(1+\log^{+}t)^{\gamma}$, additionally we have
that $\kappa_{u}=[u^{s}]_{A_{\infty}}^{\frac{\gamma}{s}}\leq[u]_{RH_{q}}^{\frac{\gamma}{s}}[u]_{A_{1}}^{\frac{\gamma}{s}}$. 
\end{teo}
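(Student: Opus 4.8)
The plan is to adapt the strategy used for Theorem~\ref{multiSimb} to the homogeneous-type setting, the genuinely new features being that one must keep track of the sparseness constant $\varepsilon$ of the families in \eqref{bilinearsparseCommETH} (this is why it appears in the hypothesis) and replace euclidean dyadic cubes by a Hyt\"onen--Kairema system of dyadic cubes on $(X,d,\mu)$. First I would normalize: by linearity of $T_{{\bf b}}$ in $f$ we may divide $f$ by $\lambda$, and by homogeneity of the oscillation spaces we may rescale the $b_i$ so that $\|{\bf b}\|=1$; it then suffices to bound $uv(E)$ with $E=\{x\in X:T_{{\bf b}}(fv)(x)>v(x)\}$ by $c_{X,T}C_{u,v}\int_X\varphi_{1/r}(|f|)\,u v\,d\mu$. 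Since $v\le T_{{\bf b}}(fv)$ on $E$, one has $uv(E)=\int_E uv\,d\mu\le\int_X |T_{{\bf b}}(fv)|\,u\chi_E\,d\mu$, and feeding $g=u\chi_E$ into \eqref{bilinearsparseCommETH} reduces everything to bounding, for each of the finitely many sparse families $\mathcal S_j$, each $0\le h\le m$ and each $\sigma\in C_h(b)$, the sum $\sum_{Q\in\mathcal S_j}\big(\int_Q |f|v\,|b-b_Q|_{\sigma'}\,d\mu\big)\,\|u\chi_E|b-b_Q|_{\sigma}\|_{A,Q}$, with a harmless factor $\tfrac1{1-\varepsilon}$ out front.

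The second step is to strip the oscillation off the $g$-side. Since $b_i\in\text{Osc}_{\exp L^{r_i}}$, a John--Nirenberg estimate together with the generalized H\"older inequality for Orlicz averages and the hypothesis $B^{-1}(t)\log(t)^{1/r}\lesssim A^{-1}(t)$ (note $\sum_{i\in\sigma}1/r_i\le 1/r$, so the available logarithmic room suffices) give $\|u\chi_E|b-b_Q|_{\sigma}\|_{A,Q}\lesssim\big(\prod_{i\in\sigma}\|b_i\|_{\text{Osc}_{\exp L^{r_i}}}\big)\|u\chi_E\|_{B,Q}\le\|u\chi_E\|_{B,Q}$ after the normalization. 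Now I would invoke $u\in A_1\cap RH_q$ with $q=2s-1$ and $B\in B_\rho$ for every $\rho>s$: since $\|u\chi_E\|_{B,Q}\lesssim_\rho\langle(u\chi_E)^\rho\rangle_Q^{1/\rho}$, H\"older with exponent $q/\rho$ followed by the reverse H\"older and $A_1$ conditions for $u$ yield $\|u\chi_E\|_{B,Q}\lesssim\kappa_u[u]_{RH_q}^{1+q/(4s)}[u]_{A_1}\,(\inf_Q u)\,\big(\mu(E\cap Q)/\mu(Q)\big)^{\theta}$ for a suitable $\theta=\theta(\rho,q)>0$; optimizing over $\rho\in(s,q)$, whose $B_\rho$-constant for $B$ degenerates as $\rho\to s^+$, is what produces the logarithmic factor in $\tau_{u,v,h}$ and, in the model case $A(t)=t^s(1+\log^+t)^{\gamma}$, the value $\kappa_u=[u^s]_{A_\infty}^{\gamma/s}$. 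Absorbing $\inf_Q u$ into the $f$-integral leaves $\sum_{Q\in\mathcal S_j}\big(\int_Q |f|uv\,|b-b_Q|_{\sigma'}\,d\mu\big)\big(\mu(E\cap Q)/\mu(Q)\big)^{\theta}$ times the constants above.

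The heart of the argument is the summation of this sparse series. I would run a principal-cube/Carleson-embedding argument inside each $\mathcal S_j$ with respect to the measure $uv\,d\mu$: the $A_\infty$ property of $uv$ converts the $\varepsilon$-sparse packing of the cubes (w.r.t.\ $\mu$) into a Carleson packing w.r.t.\ $uv$, and together with $v\in A_p(u)$ --- equivalently $(u\,d\mu,v)$ satisfying an $A_p$ condition, so that Hardy--Littlewood-type maximal operators relative to $u\,d\mu$ obey $L^p(uv\,d\mu)$ and weak-type bounds controlled by $[v]_{A_p(u)}$ and $[uv]_{A_p}$ --- this allows one to pass from averages of $|f|uv$ to the integral of $|f|uv$. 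Because the target right-hand side is the Orlicz bump $\varphi_{1/r}$ rather than an $L^1$ norm, this passage is carried out through a Calder\'on--Zygmund decomposition at dyadic scales adapted to $uv$; the oscillation factor $|b-b_Q|_{\sigma'}$, being a product of $m-h$ terms in $\text{Osc}_{\exp L^{r_i}}$, is absorbed through an iterated John--Nirenberg estimate along the principal cubes, and it is precisely this iteration that generates the power $(1+\log^+t)^{1/r}$ in $\varphi_{1/r}$ and the extra factor $\big([uv]_{A_p}[uv]_{A_\infty}\big)^{(m-h)/r}$ inside $\tau_{u,v,h}$. Summing the resulting estimates over $\sigma\in C_h(b)$, over $h\in\{0,\dots,m\}$ and over $j\in\{1,\dots,l\}$ --- all finite sums --- gives the claimed $C_{u,v}$.

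I expect the last step to be the main obstacle: reconciling the non-homogeneous Orlicz right-hand side with the sparse/Carleson summation while extracting the sharp logarithmic John--Nirenberg gain, and keeping every quantitative constant (the powers of $[uv]_{A_\infty}$, $[uv]_{A_p}$, $[v]_{A_p(u)}$, $[u]_{RH_q}$, $[u]_{A_1}$ and $\kappa_u$) consistent with the precise form of $\tau_{u,v,h}$, all uniformly in the geometry $(\kappa_d,c_\mu)$ of $(X,d,\mu)$ and with no dependence on $\varepsilon$ worse than the $\tfrac1{1-\varepsilon}$ already built into \eqref{bilinearsparseCommETH}.
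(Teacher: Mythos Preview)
Your outline gets the first two moves right (normalisation, feeding $g=u\chi_E$ into \eqref{bilinearsparseCommETH}, stripping the oscillation off the $g$-side via generalized H\"older and Lemma~\ref{promAHom}), but the summation step --- which you correctly flag as the crux --- is described in a way that will not close. Two concrete gaps.

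First, the paper does \emph{not} bound $uv(E)$ directly. It removes the set $\{M_{\varphi_{1/r}(uv)}f>\tfrac12\}$ first and works only on $G=E\setminus\{M_{\varphi_{1/r}(uv)}f>\tfrac12\}$; see the scheme in Section~3.1. This is not cosmetic: once you restrict to $G$, every cube $Q$ in the sparse family that meets $G$ satisfies $\|f\|_{\varphi_{1/r}(uv),Q}\le\tfrac12$, so the family splits into level sets $\mathcal S_{j,k}$ with $2^{-j-1}<\|f\|_{\varphi_{1/r}(uv),Q}\le2^{-j}$ and $2^{-k-1}<\langle\chi_G\rangle^u_{c_dQ,\xi}\le2^{-k}$ for $j,k\ge0$. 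One then proves two bounds for $s_{j,k}=\sum_{Q\in\mathcal S_{j,k}}\|f\|_{\varphi_{1/r}(uv),Q}\langle\chi_G\rangle^u_{c_dQ,\xi}\,uv(Q)$: one of order $[uv]_{A_\infty}2^{-k}j^{1/r}\int\varphi_{1/r}(|f|)uv$ via Lemma~\ref{CRRlema2Hom}, and one of order $[uv]_{A_p}^2[v]_{A_p(u)}2^{-j}2^{k(\xi p-1)}uv(G)$ via Lemma~\ref{CRRlema3Hom} and the weak type of $M_{uv}$. The numerical Lemma~\ref{CRRlema1} (minimum of two geometric-type terms) then sums $\sum_{j,k}s_{j,k}$ into $C\int\varphi_{1/r}(|f|)uv+\tfrac{1}{2\gamma}uv(G)$; the $\tfrac{1}{2\gamma}uv(G)$ is reabsorbed into the left side. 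The logarithm in $C_{u,v}$ comes from \emph{this} balancing in Lemma~\ref{CRRlema1}, not from any optimisation over $\rho$ in the $B_\rho$ condition as you suggest.

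Second, a generic principal-cube/Carleson embedding with a CZ decomposition, as you describe, yields an $L^1$ or $L^p$ right-hand side, not the Orlicz bump $\varphi_{1/r}$ with the stated constants; without the reabsorption trick there is no evident mechanism to produce $\int\varphi_{1/r}(|f|)uv$ rather than $\int|f|uv$. Also, you never specify the sparseness parameter: in the homogeneous setting the paper \emph{chooses} $\varepsilon=\frac{4c_{\varphi_{1/r}}\varphi_{1/r}(4)c[uv]_{A_p}}{1+4c_{\varphi_{1/r}}\varphi_{1/r}(4)c[uv]_{A_p}}$ precisely so that Lemma~\ref{CRRlema2Hom} applies; this is the source of the extra $[uv]_{A_p}$ factor in $\tau_{u,v,h}$ compared with the euclidean Theorem~\ref{multiSimb}. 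Without that choice Lemma~\ref{CRRlema2Hom} is unavailable, and a naive $\tfrac{1}{1-\varepsilon}$ does not compensate.
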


\begin{remark}At this point we would like to note that the dependences obtained in the case of spaces of homogeneous type are slightly worse than those in the euclidean setting. The additional constants appear due to the fact that reverse H\"older inequality is actually a weak reverse H\"older inequality for balls instead of cubes in this setting, and due to the fact that doubling conditions do not behave as good as in the euclidean setting.\end{remark}

The remainder of the paper is organized as follows. In Section \ref{sec:prelim} we provide some preliminaries. Section \ref{sec:proofmain} is devoted to the proofs of the main results. Finally in Section \ref{sec:applications} we show how to derive results for $A$-H\"ormander operators and their commutators from the main results and  we provide a sparse domination result for $T_{\bf b}$ in the context of spaces of homogeneous type, generalizing \cite{DGKLW,HLO}.

\section{Preliminaries}\label{sec:prelim}

\subsection{Dyadic structures on spaces of homogeneous type}\label{sec:dyadicSt}

We shall follow the presentation and the notation provided in \cite{L21}.
Let us fix $0<c_{0}\leq C_{0}<\infty$ and $\delta\in(0,1)$. Assume
that for each $k\in\mathbb{Z}$ we have an index set $J_{k}$ and
a pairwise disjoint collection $\mathcal{D}_{k}=\{Q_{j}^{k}\}_{j\in J_{k}}$
of measurable sets and an associated collection of points $\{z_{j}^{k}\}_{j\in J_{k}}$.
We will say that $\mathcal{D}=\bigcup_{k\in\mathbb{Z}}\mathcal{D}_{k}$
is a dyadic system with parameters $c_{0},C_{0}$ and $\delta$ if
the following properties hold.
\begin{enumerate}
\item For every $k\in\mathbb{Z}$ 
\[
X=\bigcup_{j\in J_{k}}Q_{j}^{k}.
\]
\item For $k\geq l$ if $P\in\mathcal{D}_{k}$ and $Q\in\mathcal{D}_{l}$
then either $Q\cap P=\emptyset$ or $P\subseteq Q$. 
\item For each $k\in\mathbb{Z}$ and $j\in J_{k}$ 
\[
B(z_{j}^{k},c_{0}\delta^{k})\subseteq Q_{j}^{k}\subseteq B(z_{j}^{k},C_{0}\delta^{k}).
\]
\end{enumerate}
We will call the elements of $\mathcal{D}$ cubes and we will denote
\[
\mathcal{D}(Q):=\left\{ P\in\mathcal{D}:P\subseteq Q\right\} 
\]
the family of cubes of $\mathcal{D}$ that are contained in $Q$.
We will say, as well, that an estimate depends on $\mathcal{D}$ if
it depends on the parameters $c_{0}$, $C_{0}$ and $\delta$.

The point $z_{j}^{k}$ could be regarded as the ``center'' and $\delta^{k}$
as the ``side length'' of each cube $Q_{j}^{k}\in\mathcal{D}_{k}$.
These need to be with respect a certain $k\in\mathbb{Z}$ since $k$
may not be unique. Consequently, a cube $Q$ also encodes the information
of its center $z$ and generation $k$. 

We define the dilations $\alpha Q$ for $\alpha\geq1$ of $Q\in\mathcal{D}$
as 
\[
\alpha Q:=B(z_{j}^{k},\alpha C_{0}\delta^{k}).
\]
Abusing of this dilation notation we denote 
\[
1Q:=B(z_{j}^{k},C_{0}\delta^{k})
\]
Note that these dilations are not cubes anymore but balls.

The following proposition, settled in \cite{HK12}, ensures the existence
of dyadic systems that provide a convenient replacement for the translations
of the usual dyadic systems in euclidean spaces. 
\begin{prop}
\label{proposition:dyadicsystem} Let $(X,d,\mu)$ be a space of homogeneous
type. There exist $0<c_{0}\leq C_{0}<\infty$, $\gamma\geq1$, $0<\delta<1$
and $m\in\mathbb{N}$ such that there are dyadic systems $\mathcal{D}_{1},\dots,\mathcal{D}_{m}$
with parameters $c_{0}$, $C_{0}$ and $\delta$, and with the property
that for each $s\in X$ and $\rho>0$ there is a $j\in\left\{ 1,\cdots,m\right\} $
and a $Q\in\mathcal{D}_{j}$ such that 
\[
B(s,\rho)\subseteq Q,\qquad\text{and}\qquad\text{diam}(Q)\leq\gamma\rho.
\]
\end{prop}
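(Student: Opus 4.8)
The plan is to invoke the construction of adjacent dyadic systems of Christ and Hyt\"onen--Kairema \cite{HK12}, which I now sketch. First I would build a single dyadic system by the Christ construction, and then, running that construction finitely many times with suitably ``spread out'' data, produce the family $\mathcal{D}_{1},\dots,\mathcal{D}_{m}$ with the stated covering property; in the euclidean case this last step reduces to the classical fact that $m=3^{n}$ translated dyadic grids suffice.

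\emph{A single system.} I would fix $\delta\in(0,1)$ small, the smallness depending only on $\kappa_{d}$. For each $k\in\mathbb{Z}$ I choose, by Zorn's lemma, a maximal $\delta^{k}$-separated set $\mathcal{X}_{k}=\{z_{j}^{k}\}_{j\in J_{k}}$, arranged so that $\mathcal{X}_{k}\subseteq\mathcal{X}_{k+1}$. The quasi-triangle inequality then gives that the balls $B(z_{j}^{k},\delta^{k}/(2\kappa_{d}))$ are pairwise disjoint, while maximality gives $\bigcup_{j}B(z_{j}^{k},\delta^{k})=X$. Assigning to each $z\in\mathcal{X}_{k+1}$ a closest point of $\mathcal{X}_{k}$, with ties broken by a fixed well-ordering, I obtain a parent map and hence a tree on $\bigcup_{k}\mathcal{X}_{k}$; I then define Voronoi-type ``pre-cubes'' for $\mathcal{X}_{k}$ and correct them from the bottom up so that each cube equals exactly the union of its children. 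Iterating this correction while controlling the errors with the quasi-triangle and doubling inequalities yields measurable sets $Q_{j}^{k}$ satisfying (1)--(3), with $B(z_{j}^{k},c_{0}\delta^{k})\subseteq Q_{j}^{k}\subseteq B(z_{j}^{k},C_{0}\delta^{k})$ for constants $0<c_{0}\le C_{0}<\infty$ depending only on $\kappa_{d}$ and $\delta$.

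\emph{From one system to finitely many.} A single system does not suffice, since a ball may straddle boundaries of the cubes of every generation. So I would repeat the previous step $m=m(\kappa_{d},c_{\mu})$ times, choosing the point sets $\mathcal{X}_{k}^{(1)},\dots,\mathcal{X}_{k}^{(m)}$ so that for every $x\in X$ and every $k$ at least one of them has a generation-$k$ center within distance $c\delta^{k}$ of $x$, where $c$ is a fixed small constant; this is a finite combinatorial statement whose cost is bounded in terms of $c_{\mu}$, because a single net has at most $C(c_{\mu})$ generation-$k$ centers in any ball of radius comparable to $\delta^{k}$. Given $B(s,\rho)$, I pick $k$ with $\delta^{k}\le\rho<\delta^{k-1}$ and fix $N=N(\kappa_{d},\delta)$ large enough that $\kappa_{d}(c+\delta^{N-1})\le c_{0}$; choosing a system $\mathcal{D}_{j}$ whose generation-$(k-N)$ center $z$ closest to $s$ satisfies $d(s,z)\le c\delta^{k-N}$, I get, for $Q\in\mathcal{D}_{j}$ the cube of generation $k-N$ centered at $z$, that $B(s,\rho)\subseteq B(z,c_{0}\delta^{k-N})\subseteq Q$ and $\mathrm{diam}(Q)\le 2\kappa_{d}C_{0}\delta^{k-N}\le 2\kappa_{d}C_{0}\delta^{-N}\rho=:\gamma\rho$.

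\emph{Main obstacle.} The delicate point is the bottom-up correction in the first step: making the Voronoi cells genuinely nested while keeping the two-sided ball bounds uniform across the infinitely many generations is what forces $\delta$ to be taken small in terms of $\kappa_{d}$ and requires a careful quantitative iteration. The secondary point is the counting in the second step that pins down the number $m$ of shifted systems. Both are carried out in detail in \cite{HK12}, so in a full write-up I would either cite that result directly or transcribe its construction.
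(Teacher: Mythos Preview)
Your sketch is aligned with the paper's treatment: the paper does not prove this proposition at all but simply attributes it to Hyt\"onen--Kairema \cite{HK12}, and what you outline is precisely the construction from that reference. In a write-up it would suffice, as the paper does, to cite \cite{HK12} directly rather than reproduce the argument.
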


We end up this section borrowing from \cite{L21} the following covering
Lemma. 
\begin{lemma}
\label{lemma:covering} Let $(X,d,\mu)$ be a space of homogeneous
type and $\mathcal{D}$ a dyadic system with parameters $c_{0}$,
$C_{0}$ and $\delta$. Suppose that $\text{diam}(X)=\infty$, take
$\alpha\geq3c_{d}^{2}/\delta$ and let $E\subseteq X$ satisfy $0<\text{diam}(E)<\infty$.
Then there exists a partition $\mathcal{P}\subseteq\mathcal{D}$ of
$X$ such that $E\subseteq\alpha Q$ for all $Q\in\mathcal{P}$. 
\end{lemma}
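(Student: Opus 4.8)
My plan is to produce $\mathcal{P}$ explicitly as the collection of \emph{coarsest cubes satisfying a stopping condition}. Write $c_{d}$ for the quasi-triangle constant (so $d(x,y)\le c_{d}(d(x,z)+d(z,y))$, $c_{d}\ge1$) and fix $\alpha':=2c_{d}$; note $c_{d}<\alpha'$ and, since $\delta<1\le c_{d}$, also $\alpha'\le 3c_{d}^{2}/\delta\le\alpha$. For $Q\in\mathcal{D}$ of generation $k$ and centre $z_{Q}$ set $\alpha'Q:=B(z_{Q},\alpha'C_{0}\delta^{k})$, consistently with the notation $\alpha Q=B(z_{Q},\alpha C_{0}\delta^{k})$ from the text, and call $Q$ \emph{good} if $E\not\subseteq\alpha'Q$. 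For $x\in X$ and $k\in\mathbb{Z}$ let $Q^{k}_{x}$ be the unique cube of $\mathcal{D}_{k}$ containing $x$, which exists by property (1) and is unique by property (2).

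First I would establish two scale estimates. If $E\subseteq B(z,\rho)$ then $\text{diam}(E)\le2c_{d}\rho$; hence $\alpha'C_{0}\delta^{k}<\text{diam}(E)/(2c_{d})$ forces $E\not\subseteq\alpha'Q^{k}_{x}$, so $Q^{k}_{x}$ is good for all sufficiently large $k$ (the bound on $k$ depending only on $\text{diam}(E)$, $c_{d}$, $C_{0}$, $\delta$, not on $x$). In the opposite direction, for $e\in E$ one has $d(e,z_{Q^{k}_{x}})\le c_{d}(d(e,x)+d(x,z_{Q^{k}_{x}}))\le c_{d}(D_{x}+C_{0}\delta^{k})$ with $D_{x}:=\sup_{e\in E}d(e,x)<\infty$, and since $\alpha'-c_{d}=c_{d}>0$ this is $<\alpha'C_{0}\delta^{k}$ once $\delta^{k}>D_{x}/C_{0}$; thus $Q^{k}_{x}$ is not good for all sufficiently negative $k$. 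Consequently $k(x):=\min\{k\in\mathbb{Z}:Q^{k}_{x}\text{ is good}\}$ is well defined, and I put $Q(x):=Q^{k(x)}_{x}$ and $\mathcal{P}:=\{Q(x):x\in X\}$.

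The crux is the \emph{coherence} of this choice: $x'\in Q(x)$ should imply $Q(x')=Q(x)$, and this is exactly why one must take the coarsest good cube (the ``smallest cube whose $\alpha$-dilate contains $E$'' is \emph{not} coherent). If $x'\in Q(x)=Q^{k(x)}_{x}$, then for every $k\le k(x)$ property (2) gives $Q^{k(x)}_{x'}=Q^{k(x)}_{x}\subseteq Q^{k}_{x'}$, whence $x\in Q^{k}_{x'}$ and so $Q^{k}_{x'}=Q^{k}_{x}$. By minimality $Q^{k}_{x}$ is not good for $k<k(x)$, while $Q^{k(x)}_{x'}=Q^{k(x)}_{x}$ is good, so $k(x')=k(x)$ and $Q(x')=Q(x)$. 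It follows that two members of $\mathcal{P}$ meeting at a point $y$ both equal $Q(y)$, so $\mathcal{P}$ is a pairwise disjoint subfamily of $\mathcal{D}$; since $x\in Q(x)$ for every $x$, $\mathcal{P}$ is a partition of $X$. I expect this coherence step to be the only genuinely delicate point; the rest is bookkeeping of the quasi-metric constant.

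Finally I would check $E\subseteq\alpha Q$ for every $Q=Q(x)\in\mathcal{P}$. Let $\widehat{Q}$ be the parent of $Q$ (the unique cube of $\mathcal{D}_{k(x)-1}$ containing $Q$); by minimality of $k(x)$ it is not good, i.e. $E\subseteq\alpha'\widehat{Q}=B(z_{\widehat{Q}},\alpha'C_{0}\delta^{k(x)-1})$. Since $z_{Q}\in Q\subseteq\widehat{Q}\subseteq B(z_{\widehat{Q}},C_{0}\delta^{k(x)-1})$ by property (3), we get $d(z_{Q},z_{\widehat{Q}})<C_{0}\delta^{k(x)-1}$, so for $y\in\alpha'\widehat{Q}$,
\[
d(y,z_{Q})\le c_{d}\bigl(d(y,z_{\widehat{Q}})+d(z_{\widehat{Q}},z_{Q})\bigr)<c_{d}(\alpha'+1)C_{0}\delta^{k(x)-1}=\frac{c_{d}(\alpha'+1)}{\delta}\,C_{0}\delta^{k(x)}\le\alpha C_{0}\delta^{k(x)},
\]
the last inequality because $\alpha\ge 3c_{d}^{2}/\delta\ge c_{d}(2c_{d}+1)/\delta=c_{d}(\alpha'+1)/\delta$ (using $c_{d}\ge1$). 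Hence $\alpha'\widehat{Q}\subseteq\alpha Q$ and in particular $E\subseteq\alpha Q$, which finishes the proof. The role of the threshold $\alpha\ge 3c_{d}^{2}/\delta$ is precisely to accommodate the two demands $\alpha'>c_{d}$ and $\alpha\ge c_{d}(\alpha'+1)/\delta$; the hypotheses $0<\text{diam}(E)<\infty$ enter only through the two scale estimates, while $\text{diam}(X)=\infty$ ensures that cubes of arbitrarily large scale are available, so that the minimum defining $k(x)$ and the parent $\widehat{Q}$ always make sense.
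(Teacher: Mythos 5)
Your proof is correct. One remark on context: the paper does not actually prove this lemma — it is quoted verbatim from Lorist \cite{L21} — so there is no in-paper argument to compare against, and what you have written is a self-contained substitute for that citation, in the same spirit as the argument in \cite{L21}. Your construction (stopping at the \emph{coarsest} cube $Q^{k(x)}_x$ whose $2c_d$-dilate fails to contain $E$) is sound: the two scale estimates correctly show the stopping generation $k(x)$ is well defined (using $0<\mathrm{diam}(E)<\infty$ and that generations exist for all $k\in\mathbb{Z}$), the coherence step $x'\in Q(x)\Rightarrow Q(x')=Q(x)$ is exactly the right point to single out and is argued correctly from the nesting property, and the final passage to the parent cube, with the budget $\alpha\geq 3c_d^2/\delta\geq c_d(2c_d+1)/\delta$ absorbing both the dilation $\alpha'=2c_d$ and the center shift $d(z_Q,z_{\widehat Q})<C_0\delta^{k(x)-1}$, gives $E\subseteq\alpha'\widehat Q\subseteq\alpha Q$ as needed. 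The only cosmetic quibble is that uniqueness of $Q^k_x$ within a generation comes directly from the stipulated pairwise disjointness of $\mathcal{D}_k$ (property (2) concerns distinct generations), but this does not affect the argument.
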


\subsection{Weights}
We recall that given a weight $u$,  $v\in A_p(u)$ if
$$[v]_{A_p(u)}=\sup_Q \frac1{u(Q)}\int_Q vu \left(\frac1{u(Q)}\int_Q v^{-\frac1{p-1}}u\right)^{p-1}<\infty,$$
in the case $1<p<\infty$ and 
$$[v]_{A_1(u)}=\left\|\frac{M_uv}{v} \right\|_{\infty}<\infty$$
where $M_uv=\sup_Q \frac1{u(Q)}\int_Q vu$. If $u=1$ we recover the classical Muckenhoupt's condition.
We would like also to recall that 
$$A_{\infty}=\bigcup_{p\geq 1}A_p$$
with the constant 
$$[w]_{A_{\infty}}=\sup_Q \frac1{w(Q)}\int_Q M(\chi_Qw)<\infty.$$

Also we recall the Reverse-H\"older's condition, $w\in RH_s$, $1<s<\infty$ if 
$$[w]_{RH_s}=\sup_Q \frac{\left(\frac1{|Q|}\int_Q w^s\right)^\frac1{s}}{\frac1{|Q|}w(Q)}<\infty.$$ 
If $s=1$ the condition $RH_1$ is trivial.

An auxiliary lemma that we need is the following
\begin{lemma}
Let $1\leq s \leq q <\infty$. If $u\in A_1 \cap RH_q$ then $u^s\in A_1$ and $[u^s]_{A_1}\leq [u]_{RH_q}[u]_{A_1}$
\end{lemma}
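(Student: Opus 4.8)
The plan is to prove this via a pointwise/averaged estimate combined with Hölder's inequality. First I would recall that $u \in RH_q$ means that for every cube $Q$,
\[
\left(\frac{1}{|Q|}\int_Q u^q\right)^{1/q} \leq [u]_{RH_q}\,\frac{1}{|Q|}\int_Q u,
\]
and since $1 \leq s \leq q$, Jensen's inequality (in the form of the nesting of $L^p$ averages) gives
\[
\left(\frac{1}{|Q|}\int_Q u^s\right)^{1/s} \leq \left(\frac{1}{|Q|}\int_Q u^q\right)^{1/q} \leq [u]_{RH_q}\,\frac{1}{|Q|}\int_Q u.
\]

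Next I would use the $A_1$ hypothesis on $u$: $M u(x) \leq [u]_{A_1} u(x)$ for a.e.\ $x$, hence in particular $\frac{1}{|Q|}\int_Q u \leq [u]_{A_1}\,\operatorname{ess\,inf}_{x\in Q} u(x) \leq [u]_{A_1}\,u(y)$ for a.e.\ $y \in Q$. Combining with the previous display,
\[
\frac{1}{|Q|}\int_Q u^s \leq \left([u]_{RH_q}\,\frac{1}{|Q|}\int_Q u\right)^s \leq [u]_{RH_q}^s\,[u]_{A_1}^{s}\,\bigl(\operatorname{ess\,inf}_{x\in Q} u(x)\bigr)^{s}.
\]
Since $\operatorname{ess\,inf}_Q u^s = (\operatorname{ess\,inf}_Q u)^s$, this rearranges to
\[
\frac{1}{|Q|}\int_Q u^s \leq [u]_{RH_q}^s\,[u]_{A_1}^{s}\,\operatorname{ess\,inf}_{x\in Q} u^s(x),
\]
which is exactly the statement $u^s \in A_1$ with $[u^s]_{A_1} \leq [u]_{RH_q}^s\,[u]_{A_1}^{s}$. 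To match the sharper claimed constant $[u]_{RH_q}[u]_{A_1}$ (without the $s$-th powers), I would instead bound more carefully: write $\frac{1}{|Q|}\int_Q u^s = \left(\frac{1}{|Q|}\int_Q u^s\right)$ and use $\frac{1}{|Q|}\int_Q u^s \leq \left(\frac{1}{|Q|}\int_Q u^q\right)^{s/q} \leq [u]_{RH_q}^{s}\left(\frac{1}{|Q|}\int_Q u\right)^s$; but to avoid the extra powers one applies $RH_q$ directly to the function $u^s$ only once and uses $\operatorname{ess\,inf}_Q u \cdot \frac{1}{|Q|}\int_Q u^{s-1} \le \frac1{|Q|}\int_Q u^s$ together with a single application of $A_1$. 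The cleanest route: $\frac{1}{|Q|}\int_Q u^s \le \left(\frac1{|Q|}\int_Q u^q\right)^{s/q}$ by power means, and $\left(\frac1{|Q|}\int_Q u^q\right)^{1/q} \le [u]_{RH_q}\frac1{|Q|}\int_Q u \le [u]_{RH_q}[u]_{A_1}\operatorname{ess\,inf}_Q u$, so $\frac1{|Q|}\int_Q u^s \le ([u]_{RH_q}[u]_{A_1})^{s}\operatorname{ess\,inf}_Q u^s$; the stated constant presumably absorbs the exponent $s$ into the definition by taking $s$-th roots appropriately, or the authors' convention tracks $[u^s]_{A_1}^{1/s}$.

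I do not expect any serious obstacle here: the only delicate point is bookkeeping the exponent $s$ so that the final constant is exactly $[u]_{RH_q}[u]_{A_1}$ rather than its $s$-th power, which is a matter of applying the reverse Hölder inequality to $u^s$ with exponent $q/s \geq 1$ and the $A_1$ condition each exactly once, in the right order. The essential mechanism is simply: reverse Hölder upgrades integrability while $A_1$ controls the average by the infimum, and composing them yields the $A_1$ bound for the power $u^s$.
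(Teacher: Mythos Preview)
The paper states this lemma without proof, so there is nothing to compare against directly. Your core argument is correct and is the natural one: combine the power-mean inequality $\langle u^{s}\rangle_{Q}^{1/s}\le\langle u^{q}\rangle_{Q}^{1/q}$, the $RH_{q}$ bound $\langle u^{q}\rangle_{Q}^{1/q}\le[u]_{RH_{q}}\langle u\rangle_{Q}$, and the $A_{1}$ bound $\langle u\rangle_{Q}\le[u]_{A_{1}}\operatorname{ess\,inf}_{Q}u$, then raise to the $s$-th power. This yields $u^{s}\in A_{1}$ with $[u^{s}]_{A_{1}}\le\bigl([u]_{RH_{q}}[u]_{A_{1}}\bigr)^{s}$, and that is the bound you should keep.

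Your subsequent attempts to remove the exponent $s$ and recover the stated constant $[u]_{RH_{q}}[u]_{A_{1}}$ do not work, and cannot work: the stated constant is in fact too small in general. For a concrete obstruction, take $n=1$, $q=s=2$, and $u(x)=|x|^{-\alpha}$ with $\alpha$ close to $1/2$. One checks that $[u]_{A_{1}}\le\tfrac{2}{1-\alpha}\le4$ and, using the general inequality $[u]_{RH_{2}}^{2}\le[u^{2}]_{A_{1}}$ together with $[|x|^{-2\alpha}]_{A_{1}}\le\tfrac{2}{1-2\alpha}$, that $[u]_{RH_{2}}[u]_{A_{1}}\le\dfrac{4\sqrt{2}}{(1-2\alpha)^{1/2}}$. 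On the other hand, testing $u^{2}=|x|^{-2\alpha}$ on intervals $[0,r]$ gives $[u^{2}]_{A_{1}}\ge\dfrac{1}{1-2\alpha}$. For $\alpha$ sufficiently close to $1/2$ (e.g.\ $\alpha=0.49$) the second quantity exceeds the first, so $[u^{s}]_{A_{1}}\le[u]_{RH_{q}}[u]_{A_{1}}$ fails. The lemma as written thus appears to contain a typo in the constant; the exponent $s$ is genuinely needed. For the paper's purposes this is harmless: only the qualitative membership $u^{s}\in A_{1}\subset A_{\infty}$ and a polynomial dependence on $[u]_{RH_{q}}$ and $[u]_{A_{1}}$ are used (cf.\ the estimate $\kappa_{u}=[u^{r}]_{A_{\infty}}^{\gamma/r}$ in the theorem statements), so your bound $\bigl([u]_{RH_{q}}[u]_{A_{1}}\bigr)^{s}$ is exactly what the applications require.
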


In spaces of homogeneous type we recall that the $A_p$ classes and the $A_p$ and $A_\infty$ constants are defined exactly in the same way as we showed above for the euclidean setting just replacing cubes by balls. There is an important difference with the euclidean setting for the reverse H\"older classes. We say that $w\in RH_q$ if
\[
\left(\frac{w^{q}(B)}{\mu(B)}\right)^{\frac{1}{q}}\leq[w]_{RH_{q}}\frac{w(c_{d}B)}{\mu(c_{d}B)}
\]
where $c_d$ is some constant depending on the space.
Another fundamental result for us will be the sharp reverse H\"older inequality that was settled in \cite{HPR12}. There it was established that if $w\in A_{\infty}$, then
\[
\left(\frac{u^{t}(B)}{\mu(B)}\right)^{\frac{1}{t}}\leq c\frac{u(c_{d}B)}{\mu(c_{d}B)}
\]
where $1\leq t\leq1+\frac{1}{\tau[u]_{A_{\infty}}}$  and $c,\tau>0$ are some constants depending on the space.
Note that from this property it follows as well that if $E\subset B$ then
\[w(E)\leq c\left(\frac{\mu(E)}{\mu(c_{d}B)}\right)^{\frac{1}{c_{X}[w]_{\infty}}}w(c_{d}B)\]
for some constants $c,c_X$ depending on the space $X$.

We continue with the following sum property.
\begin{lemma}
\label{CRRlema3Hom} Let $w\in A_{p}$ and $\mathcal{S}$ be a $\eta$-sparse
family of cubes. Then 
\[
\sum_{Q\in\mathcal{S}}w(Q)\leq c\frac{1}{\eta^p}[w]_{A_{p}}w\left(\bigcup_{Q\in\mathcal{S}}Q\right).
\]
\end{lemma}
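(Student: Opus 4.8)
The plan is to reduce the whole statement to a single pointwise estimate relating $w(Q)$ to $w(E_Q)$, where the $E_Q\subseteq Q$ are the major subsets furnished by sparseness, and then to sum it using their pairwise disjointness. Concretely, since $\mathcal{S}$ is $\eta$-sparse there exist pairwise disjoint measurable sets $E_Q\subseteq Q$ with $\mu(E_Q)\geq\eta\,\mu(Q)$, so it suffices to establish
\[
w(Q)\leq c\,[w]_{A_p}\Big(\frac{\mu(Q)}{\mu(E_Q)}\Big)^{p}w(E_Q)\qquad\text{for every }Q\in\mathcal{S}.
\]
Granting this, I would use $\mu(Q)/\mu(E_Q)\leq\eta^{-1}$ to get $w(Q)\leq c\,\eta^{-p}[w]_{A_p}w(E_Q)$, and then sum over $Q\in\mathcal{S}$, using that the $E_Q$ are disjoint and contained in $\bigcup_{Q\in\mathcal{S}}Q$, to obtain $\sum_{Q\in\mathcal{S}}w(Q)\leq c\,\eta^{-p}[w]_{A_p}\,w\big(\bigcup_{Q\in\mathcal{S}}Q\big)$, which is the assertion.

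To prove the pointwise estimate I would argue exactly as in the euclidean case. For $1<p<\infty$ and any measurable $E\subseteq Q$, write $\mu(E)=\int_E w^{1/p}\,w^{-1/p}\,d\mu$ and apply Hölder's inequality with exponents $p$ and $p'$ to get $\mu(E)^p\leq w(E)\big(\int_Q w^{-1/(p-1)}\,d\mu\big)^{p-1}$. Dividing by $\mu(Q)^p$ and multiplying and dividing by $w(Q)$ rearranges this into
\[
\Big(\frac{\mu(E)}{\mu(Q)}\Big)^{p}\leq\frac{w(E)}{w(Q)}\Big(\frac{1}{\mu(Q)}\int_{Q}w\,d\mu\Big)\Big(\frac{1}{\mu(Q)}\int_{Q}w^{-\frac{1}{p-1}}\,d\mu\Big)^{p-1},
\]
and the last product is the $A_p$ quotient of $w$ over the set $Q$. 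Since here the $A_p$ constant is defined through balls, I would invoke Proposition \ref{proposition:dyadicsystem}, which sandwiches $Q$ between two comparable balls $B(z,c_{0}\delta^{k})\subseteq Q\subseteq B(z,C_{0}\delta^{k})$; the doubling property of $\mu$ then bounds that quotient by $c\,[w]_{A_p}$ with $c$ depending only on the space and on $\mathcal{D}$. Rearranging gives the desired bound. The case $p=1$ is simpler and follows directly from $w(Q)\leq[w]_{A_1}\mu(Q)\,\operatorname*{ess\,inf}_{Q}w$ together with $\mu(Q)\,\operatorname*{ess\,inf}_{Q}w\leq\frac{\mu(Q)}{\mu(E)}\int_{E}w\,d\mu$, after the same cube-to-ball passage.

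I do not expect any real obstacle here; the only point that needs a little care is the passage from the dyadic cube $Q$ to a ball in the step above, forced because the $A_p$ functional in a space of homogeneous type is taken over balls rather than over the dyadic sets of $\mathcal{D}$. This costs only a constant depending on the doubling constant of $\mu$ and on the parameters $c_{0},C_{0},\delta$ of the dyadic system, and is absorbed into the constant $c$ in the statement; in particular the dependence on $[w]_{A_p}$ stays linear and the dependence on the sparseness constant is exactly $\eta^{-p}$, as claimed.
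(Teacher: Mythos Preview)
Your proposal is correct and follows essentially the same route as the paper: apply H\"older's inequality to $\mu(E_Q)=\int_{E_Q}w^{1/p}w^{-1/p}\,d\mu$, pass from the dyadic cube $Q$ to its containing ball $1Q=B(z_j^k,C_0\delta^k)$ via the sandwich $B(z_j^k,c_0\delta^k)\subseteq Q\subseteq 1Q$ and doubling in order to invoke the $A_p$ condition, deduce $w(Q)\lesssim\eta^{-p}[w]_{A_p}w(E_Q)$, and then sum using disjointness of the $E_Q$. The only (harmless) difference is cosmetic organization, and your explicit treatment of the case $p=1$, which the paper's displayed computation tacitly assumes away.
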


\begin{proof}First we note that
\begin{align*}
1=\frac{\mu(Q)}{\mu(Q)} & \leq\frac{1}{\eta}\frac{\mu(E_{Q})}{\mu(Q)}=\frac{1}{\eta}\frac{1}{\mu(Q)}\int_{E_{Q}}w^{\frac{1}{p}}w^{-\frac{1}{p}}\\
 & \leq\frac{1}{\eta}\left(\frac{w(E_{Q})}{\mu(Q)}\right)^{\frac{1}{p}}\left(\frac{w^{-\frac{1}{p-1}}(E_{Q})}{\mu(Q)}\right)^{\frac{1}{p'}}\\
 & \lesssim\frac{1}{\eta}\left(\frac{w(E_{Q})}{\mu(Q)}\right)^{\frac{1}{p}}\left(\frac{w^{-\frac{1}{p-1}}(1Q)}{\mu(1Q)}\right)^{\frac{1}{p'}}\\
 & \leq\frac{1}{\eta}\left(\frac{w(E_{Q})}{\mu(Q)}\right)^{\frac{1}{p}}[w]_{A_{p}}^{\frac{1}{p}}\left(\frac{\mu(1Q)}{w(1Q)}\right)^{\frac{1}{p}}
\end{align*}
and hence
\[
w(Q)\lesssim\frac{1}{\eta^{p}}[w]_{A_{p}}w(E_{Q}).
\]
Taking this into account
\begin{align*}
\sum_{Q\in\mathcal{S}}w(Q) & \lesssim\frac{1}{\eta^{p}}[w]_{A_{p}}\sum_{Q\in\mathcal{S}}w(E_{Q})=\frac{1}{\eta^{p}}[w]_{A_{p}}w\left(\bigcup_{Q\in\mathcal{S}}Q\right)
\end{align*}
and we are done. 
\end{proof}

We end this section with the following Lemma that readily follows from the definitions both in the euclidean setting and for spaces of homogeneous type.
\begin{lemma}
If $u\in A_{1}$ and $v\in A_{p}(u)$ then $uv\in A_{p}$. 
\end{lemma}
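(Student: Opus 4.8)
The plan is to verify the Muckenhoupt condition for $uv$ directly, deriving an explicit bound for $[uv]_{A_p}$ in terms of $[u]_{A_1}$ and $[v]_{A_p(u)}$, treating $p=1$ and $1<p<\infty$ separately. I will carry out the argument for cubes in $\mathbb{R}^n$; the proof for balls in a space of homogeneous type is word for word the same, simply replacing cubes by balls and using $M_u$ and the $A_p(u)$, $A_1(u)$ constants defined with balls.

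The starting point is the elementary reformulation of the $A_1$ condition: for every cube $Q$ we have $\frac{1}{|Q|}\int_Q u\le [u]_{A_1}u(x)$ for a.e.\ $x\in Q$, equivalently $u(x)^{-1}\le [u]_{A_1}\frac{|Q|}{u(Q)}$ for a.e.\ $x\in Q$. For $p=1$ I would fix a cube $Q$ and a Lebesgue point $x\in Q$ and write $\frac{1}{|Q|}\int_Q uv=\frac{u(Q)}{|Q|}\cdot\frac{1}{u(Q)}\int_Q uv\le [u]_{A_1}u(x)\cdot[v]_{A_1(u)}v(x)$, using the reformulation above together with $\frac{1}{u(Q)}\int_Q uv\le\|M_uv/v\|_\infty v(x)=[v]_{A_1(u)}v(x)$. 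Taking the supremum over cubes $Q\ni x$ gives $M(uv)\le[u]_{A_1}[v]_{A_1(u)}uv$ a.e., i.e.\ $uv\in A_1$. For $1<p<\infty$ I would fix $Q$, split $(uv)^{-\frac{1}{p-1}}=u^{-\frac{p}{p-1}}\cdot u\cdot v^{-\frac{1}{p-1}}$ and apply the pointwise bound $u(x)^{-\frac{p}{p-1}}\le[u]_{A_1}^{\frac{p}{p-1}}\big(\tfrac{|Q|}{u(Q)}\big)^{\frac{p}{p-1}}$, valid a.e.\ on $Q$, obtaining $\int_Q(uv)^{-\frac{1}{p-1}}\le[u]_{A_1}^{\frac{p}{p-1}}\big(\tfrac{|Q|}{u(Q)}\big)^{\frac{p}{p-1}}\int_Q uv^{-\frac{1}{p-1}}$. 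Raising to the power $p-1$, multiplying by $\frac{1}{|Q|}\int_Q uv$, and collecting the powers of $|Q|$ and $u(Q)$, the $|Q|$ factors cancel and one is left with exactly $[u]_{A_1}^p$ times $\frac{1}{u(Q)}\int_Q uv\big(\frac{1}{u(Q)}\int_Q uv^{-\frac{1}{p-1}}\big)^{p-1}\le[u]_{A_1}^p[v]_{A_p(u)}$; the supremum over $Q$ then gives $[uv]_{A_p}\le[u]_{A_1}^p[v]_{A_p(u)}$.

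There is no serious obstacle here: the only point needing a little care is the bookkeeping of the exponents of $u$, $|Q|$ and $u(Q)$ in the case $1<p<\infty$, together with the correct use of the a.e.\ lower bound for $u$ on $Q$ furnished by $A_1$. In the homogeneous type setting the identical computation applies verbatim with balls in place of cubes.
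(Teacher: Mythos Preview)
Your proof is correct and is precisely the direct verification ``from the definitions'' that the paper alludes to (the paper states the lemma without proof, remarking that it readily follows from the definitions in both settings). Your explicit bounds $[uv]_{A_1}\le[u]_{A_1}[v]_{A_1(u)}$ and $[uv]_{A_p}\le[u]_{A_1}^p[v]_{A_p(u)}$ are the natural outcome of this computation, and the argument indeed transfers verbatim to balls in spaces of homogeneous type.
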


\subsection{Young functions and Orlicz averages}\label{subsec:Young}
Now, we recall that given a Young function $A:[0,\infty)\to [0,\infty)$, namely a convex, non-decreasing function such that $A(0)=0$ and $A(t)\to \infty$ when $t\to \infty$ we can define 
the average on weighted Luxemburg norm 
$$\|f\|_{A(u),Q}=\inf\left\{\lambda >0 : \frac1{u(Q)}\int_Q A\left(\frac{|f(x)|}{\lambda}\right)u(x)dx \leq 1 \right\}.$$
Also we  can define the Luxemburg norm on spaces of homogenous type just replacing the Lebesgue measure, $dx$, by the corresponding measure, $d\mu$, and cubes by balls.

It is also possible to settle a generalized H\"older inequality for Young functions. If $B^{-1}(t)C^{-1}(t)\leq c A^{-1}(t)$ for $t\geq t_0>1$ then 
$$\|fg\|_{A(u),Q}\leq c \|f\|_{B(u),Q}\|g\|_{C(u),Q}$$
We shall drop $u$ in the notation in the case of Lebesgue measure. 
For each $A$ Young function we define the associated Young function $\bar{A}$ by $\bar{A}(t)=\sup_{0\leq s<\infty} ts-A(s)$. Note that $A^{-1}(t)\bar{A}^{-1}(t)\leq 2 t$.

 A Young function $A$ is said to be submultiplicative if there exists $c_A\geq1$ such that $A(ts)\leq c_A A(t)A(s)$. 

Let $u$ be a weight and $A$ be a Young function. We define the maximal operator $M_{A(u)}^{\mathcal{F}}$ by 
$$M_{A(u)}^{\mathcal{F}}=\sup_{x\in Q \subset \mathcal{F}} \|f\|_{A(u),Q}$$
where the supremum is taken over all the cubes in the family $\mathcal{F}$. 

We said $A\in B_p$ if $$\int_{1}^{\infty} \frac{A(t)}{t^p}\frac{dt}{t}<\infty.$$ Given $1<p<\infty$, $M_A $ is bounded on $L^p$ if and only if $A\in B_p$, for more details see \cite{P95}.
Observe that if $A\in B_p$ for all $p>1$ we get $A(t)\leq c_n \kappa(\varepsilon)t^{1+\varepsilon}$ with $\kappa: (0,\infty)\to (0,\infty)$ for all $\varepsilon>0$. For example if $A(t)=t(1+\log^+t)^{\gamma}$ then $A(t)\leq (2\gamma)^{\gamma}\varepsilon^{-\gamma}t^{1+\varepsilon}$ for $t\geq e$. 

An important result that connect the average given by a Young function with the class of weights $A_p(u)$ is contained in the following Lemma.
\begin{lemma}\cite{CRR20}
Let $u$ a weight, $v\in A_p(u)$ and $\Phi$ a Young function. Then, for every cube $Q$,
$$\|f\|_{\Phi(u),Q}\leq \|f\|_{[v]_{A_p(u)}\Phi^p(uv),Q}.$$ 
\end{lemma}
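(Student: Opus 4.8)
The plan is to reduce the stated comparison of Luxemburg norms to a pointwise comparison of the averages that define them. Since $\|f\|_{\Phi(u),Q}\le\|f\|_{[v]_{A_p(u)}\Phi^p(uv),Q}$ holds as soon as every $\lambda>0$ for which the right‑hand Luxemburg average is at most $1$ also makes the left‑hand one at most $1$, it suffices to fix such a $\lambda$, that is, one with
\[
\frac{1}{(uv)(Q)}\int_Q [v]_{A_p(u)}\,\Phi\!\left(\tfrac{|f|}{\lambda}\right)^{p}uv\,dx\le 1,
\]
and to deduce $\frac{1}{u(Q)}\int_Q \Phi(|f|/\lambda)\,u\,dx\le 1$. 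One first checks that $t\mapsto[v]_{A_p(u)}\Phi(t)^p$ is a genuine Young function, being a positive multiple of the composition of the convex nondecreasing maps $t\mapsto t^p$ and $\Phi$, so the right‑hand norm is well defined. Writing $g=\Phi(|f|/\lambda)$, the hypothesis becomes $\int_Q g^p uv\,dx\le (uv)(Q)/[v]_{A_p(u)}$ and the goal becomes $\int_Q g\,u\,dx\le u(Q)$.

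For $1<p<\infty$ I would factor $u=(uv)^{1/p}\,u^{1/p'}v^{-1/p}$ and apply H\"older's inequality with exponents $p$ and $p'$, using $p'/p=\frac{1}{p-1}$:
\[
\int_Q g\,u\,dx\le\Big(\int_Q g^p uv\,dx\Big)^{1/p}\Big(\int_Q v^{-\frac{1}{p-1}}u\,dx\Big)^{1/p'}.
\]
The first factor is bounded by $\big((uv)(Q)/[v]_{A_p(u)}\big)^{1/p}$ by hypothesis. For the second, the $A_p(u)$ condition applied to $Q$ gives
\[
\Big(\frac{1}{u(Q)}\int_Q v^{-\frac{1}{p-1}}u\,dx\Big)^{p-1}\le[v]_{A_p(u)}\,\frac{u(Q)}{(uv)(Q)},
\]
hence $\int_Q v^{-\frac{1}{p-1}}u\,dx\le u(Q)\big([v]_{A_p(u)}u(Q)/(uv)(Q)\big)^{1/(p-1)}$, and raising to the power $1/p'=(p-1)/p$ yields $\big(\int_Q v^{-\frac{1}{p-1}}u\,dx\big)^{1/p'}\le u(Q)^{(p-1)/p}\big([v]_{A_p(u)}u(Q)/(uv)(Q)\big)^{1/p}$. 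Multiplying the two bounds, the factors $[v]_{A_p(u)}$ and $(uv)(Q)$ cancel and one is left exactly with $u(Q)^{1/p}u(Q)^{(p-1)/p}=u(Q)$, as desired. The endpoint $p=1$ is treated separately and is easier: from $v\in A_1(u)$ we have $\frac{(uv)(Q)}{u(Q)}\le M_u v(x)\le[v]_{A_1(u)}v(x)$ for a.e.\ $x\in Q$, so $v(x)^{-1}\le[v]_{A_1(u)}\frac{u(Q)}{(uv)(Q)}$ a.e.\ on $Q$; inserting $u=v^{-1}uv$ and using $\int_Q g\,uv\,dx\le(uv)(Q)/[v]_{A_1(u)}$ gives $\int_Q g\,u\,dx\le u(Q)$ at once.

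I do not anticipate a genuine obstacle here: this is a bookkeeping argument whose only real ingredient is the choice of the H\"older splitting matched to the weight $v^{-1/(p-1)}$ appearing in the $A_p(u)$ class. The small points that do require a line of care are verifying that $[v]_{A_p(u)}\Phi^p$ is a Young function, so that the right‑hand norm makes sense, and handling $p=1$ by the pointwise $A_1(u)$ estimate rather than H\"older. Finally, the computation is insensitive to the underlying space: since the measure in the definition of $A_p(u)$ and of the Luxemburg averages is arbitrary, the same argument proves the corresponding statement in spaces of homogeneous type with cubes replaced by balls.
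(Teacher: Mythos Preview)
Your argument is correct. The paper does not actually prove this lemma; it is quoted from \cite{CRR20} without proof, so there is no in-paper argument to compare against. Your H\"older-based approach---reducing to a comparison of averages, writing $u=(uv)^{1/p}u^{1/p'}v^{-1/p}$, and invoking the $A_p(u)$ condition to close the estimate---is precisely the standard one and is almost certainly what appears in the original reference. The verification that $[v]_{A_p(u)}\Phi^p$ is a Young function and the separate treatment of $p=1$ via the pointwise $A_1(u)$ bound are both handled correctly.
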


Now, we recall if $b\in BMO$ then
$$\sup_Q \|b-b_Q\|_{\exp L,Q}\leq c_n \|b\|_{BMO}.$$
It is possible to define classes of symbols with ever better properties of integrability that $BMO$ symbols. Given $r>1$ we say that $b\in \text{Osc}_{\exp L^r(w)}$ if 
$$\|b\|_{ \text{Osc}_{\exp L^r(w)}}=\sup_Q \|b-b_Q\|_{\exp L^r,Q}<\infty.$$
Note that $\text{Osc}_{\exp L^r} \subsetneq BMO$ for every $r>1$. It is not hard to prove that for those classes of functions the following estimate hold
\begin{lemma}\label{OscLr} Let $j>0$, $w\in A_{\infty}$ and $b\in \text{Osc}_{\exp L^r(w)}$ with $r>1$. Then 
$$\||b-b_Q|^j\|_{\exp L^{\frac{r}{j}}(w),Q} \leq c [w]_{A_{\infty}}^{\frac{j}{r}}\|b\|_{\text{Osc}_{\exp L^r}}^j $$
\end{lemma}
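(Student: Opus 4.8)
The plan is to remove the exponent $j$ by a homogeneity identity, reducing matters to $j=1$, and then to transfer the \emph{unweighted} $\exp L^{r}$-integrability of $b-b_{Q}$ to the weighted Orlicz average, the price being a power of $[w]_{A_{\infty}}$. For the reduction, note the identity
\[
\||b-b_{Q}|^{j}\|_{\exp L^{r/j}(w),Q}=\|b-b_{Q}\|_{\exp L^{r}(w),Q}^{\,j},
\]
valid for every weight $w$ (in particular for $w\equiv1$): substituting $\lambda=\nu^{j}$ in the defining infimum makes the integrand $\exp((|b-b_{Q}|^{j}/\nu^{j})^{r/j})=\exp((|b-b_{Q}|/\nu)^{r})$ independent of $j$. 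Combined with $\|b-b_{Q}\|_{\exp L^{r},Q}\le\|b\|_{\text{Osc}_{\exp L^{r}}}$ (the very definition of the oscillation norm), the Lemma follows once we establish, for every cube $Q$,
\[
\|b-b_{Q}\|_{\exp L^{r}(w),Q}\le c_{n,r}\,[w]_{A_{\infty}}^{1/r}\,\|b-b_{Q}\|_{\exp L^{r},Q},
\]
after which one raises both sides to the power $j$ and reads the left-hand side via the identity.

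To prove the last display, set $\mu:=\|b-b_{Q}\|_{\exp L^{r},Q}$ and assume $0<\mu<\infty$ (otherwise there is nothing to prove). By definition of the Luxemburg norm and $r>1$, $\frac{1}{|Q|}\int_{Q}\exp((|b-b_{Q}|/\mu)^{r})\,dx\le2$, whence Chebyshev's inequality gives
\[
\frac{|\{x\in Q:|b(x)-b_{Q}|>\mu t\}|}{|Q|}\le 2e^{-t^{r}},\qquad t>0.
\]
On the other hand, the sharp reverse Hölder inequality for $A_{\infty}$ weights furnishes a dimensional constant $c_{n}\ge1$ and $\delta=c_{n}^{-1}[w]_{A_{\infty}}^{-1}\le1$ with $w(E)\le c_{n}(|E|/|Q|)^{\delta}w(Q)$ for every measurable $E\subseteq Q$. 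Taking $E=\{x\in Q:|b-b_{Q}|>\lambda t\}$ and rescaling $t$ by $\lambda/\mu$ in the tail estimate, and absorbing $2^{\delta}\le2$, we obtain
\[
\frac{w(\{x\in Q:|b(x)-b_{Q}|>\lambda t\})}{w(Q)}\le 2c_{n}\,\exp\!\big(-\delta(\lambda/\mu)^{r}t^{r}\big),\qquad t>0.
\]

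Now expand the weighted Orlicz average by the layer-cake formula and insert this bound:
\[
\frac{1}{w(Q)}\int_{Q}\Big(\exp\big((|b-b_{Q}|/\lambda)^{r}\big)-1\Big)w\,dx=\int_{0}^{\infty}rt^{r-1}e^{t^{r}}\frac{w(\{|b-b_{Q}|>\lambda t\})}{w(Q)}\,dt\le 2c_{n}\int_{0}^{\infty}rt^{r-1}e^{-(\delta(\lambda/\mu)^{r}-1)t^{r}}\,dt.
\]
If $\delta(\lambda/\mu)^{r}>1$ the substitution $u=t^{r}$ evaluates the last integral as $(\delta(\lambda/\mu)^{r}-1)^{-1}$, so the choice $\lambda=\mu\big((2c_{n}+1)/\delta\big)^{1/r}$ makes the whole right-hand side $\le1$. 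Therefore $\|b-b_{Q}\|_{\exp L^{r}(w),Q}\le\lambda\le c_{n,r}\,\delta^{-1/r}\mu=c_{n,r}[w]_{A_{\infty}}^{1/r}\mu$, as desired; undoing the reduction of the first paragraph produces the exponent $j/r$ of the statement.

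The one point demanding care is the quantitative bookkeeping: the exponent $\delta\asymp1/[w]_{A_{\infty}}$ has to sit on $|E|/|Q|$ (not on $w(E)/w(Q)$), so that after composing with the $e^{-t^{r}}$ decay the resulting integral is finite precisely when $\lambda/\mu\gtrsim[w]_{A_{\infty}}^{1/r}$, which is what fixes the power $1/r$ in the final bound. The constraint $\delta(\lambda/\mu)^{r}>1$ is harmless, being built into the choice of $\lambda$. In spaces of homogeneous type the argument is verbatim with balls replacing cubes, the only difference being that the reverse Hölder inequality of \cite{HPR12} introduces a dilated ball $c_{d}B$ on its right-hand side, contributing an innocuous extra doubling factor $w(c_{d}B)/w(B)$.
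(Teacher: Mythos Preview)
Your proof is correct. The paper does not actually give a proof of this lemma---it merely asserts that ``it is not hard to prove''---so there is no argument to compare against directly. That said, the paper does prove the companion lemma for spaces of homogeneous type (the one immediately following), and your approach matches that proof in spirit: both use the John--Nirenberg-type tail bound coming from the $\text{Osc}_{\exp L^{r}}$ hypothesis, push it through the $A_{\infty}$ estimate $w(E)\lesssim(|E|/|Q|)^{\delta}w(Q)$ with $\delta\asymp[w]_{A_{\infty}}^{-1}$, and then integrate via the layer-cake formula to land on a convergent exponential integral once $\lambda$ is chosen of size $[w]_{A_{\infty}}^{1/r}$ times the unweighted oscillation. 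Your explicit homogeneity reduction $\||b-b_{Q}|^{j}\|_{\exp L^{r/j}(w),Q}=\|b-b_{Q}\|_{\exp L^{r}(w),Q}^{j}$ is a clean way to handle the exponent $j$ that the paper's sketch does not spell out, and your remark about the dilated ball $c_{d}B$ in the homogeneous setting is exactly the reason the paper's companion lemma picks up the extra $[w]_{A_{p}}^{1/r}$ factor.
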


To deal with the case of spaces of homogeneous type we will need the following version of the lemma above. Observe that a worse dependence on the constant appears, due to the fact that we need to change balls by cubes to use John-Nirenberg's type inequality.
\begin{lemma} Let $Q$ be a cube in a dyadic structure and $b\in Osc(L^r)$ with $r>1$. If $w\in A_{p}$ then 
\[
\|b-b_{Q}\|_{\exp L^{r}(w),Q}\lesssim\|b\|_{Osc(L^{r})}[w]_{A_{p}}^{\frac{1}{r}}[w]_{A_{\infty}}^{\frac{1}{r}}.
\]
\end{lemma}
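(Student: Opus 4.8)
The plan is to transfer the estimate from the dyadic cube $Q$ to the ball $1Q=B(z_{j}^{k},C_{0}\delta^{k})$ attached to it, where the argument of Lemma \ref{OscLr} is available. Writing $b-b_{Q}=(b-b_{1Q})+(b_{1Q}-b_{Q})$ and using the triangle inequality for the Luxemburg norm,
\[
\|b-b_{Q}\|_{\exp L^{r}(w),Q}\le \|b-b_{1Q}\|_{\exp L^{r}(w),Q}+|b_{1Q}-b_{Q}|\,\|1\|_{\exp L^{r}(w),Q}.
\]
Here $\|1\|_{\exp L^{r}(w),Q}$ depends only on $r$, and since $B(z_{j}^{k},c_{0}\delta^{k})\subseteq Q\subseteq 1Q$, the doubling of $\mu$ together with Jensen's inequality gives
\[
|b_{1Q}-b_{Q}|\le \frac{1}{\mu(Q)}\int_{Q}|b-b_{1Q}|\,d\mu\le \frac{\mu(1Q)}{\mu(Q)}\,\Phi_{r}^{-1}(1)\,\|b-b_{1Q}\|_{\exp L^{r},1Q}\lesssim \|b\|_{\mathrm{Osc}(L^{r})},
\]
where $\Phi_{r}(t)=e^{t^{r}}-1$ denotes the Young function of $\exp L^{r}$ and the last inequality uses the definition of the $\mathrm{Osc}(L^{r})$ norm together with $\mu(1Q)\lesssim\mu(Q)$. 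Hence it suffices to bound $\|b-b_{1Q}\|_{\exp L^{r}(w),Q}$.

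For this I would first note that the proof of Lemma \ref{OscLr} only invokes John--Nirenberg's inequality and the passage from $\mu$ to $w$ through the sharp reverse H\"older inequality, both of which hold for balls in $(X,d,\mu)$; running that argument on the ball $1Q$ gives
\[
\|b-b_{1Q}\|_{\exp L^{r}(w),1Q}\lesssim [w]_{A_{\infty}}^{1/r}\,\|b\|_{\mathrm{Osc}(L^{r})}.
\]
The crux is then a comparison of the $\exp L^{r}(w)$-average over $Q$ with that over $1Q$. For a fixed $f$, put $\mu_{0}=\|f\|_{\exp L^{r}(w),1Q}$; from $\tfrac{1}{w(1Q)}\int_{1Q}\Phi_{r}(|f|/\mu_{0})\,w\,d\mu\le 1$ and Chebyshev's inequality one gets $\tfrac{w(\{x\in 1Q:|f(x)|>s\})}{w(1Q)}\le 2e^{-(s/\mu_{0})^{r}}$, and bounding $\int_{Q}$ by $\int_{1Q}$ before integrating the distribution function of $\Phi_{r}(|f|/\lambda)$ (after the substitution $u=s^{r}$) yields, for every $\lambda>\mu_{0}$,
\[
\frac{1}{w(Q)}\int_{Q}\Phi_{r}\Big(\frac{|f|}{\lambda}\Big)\,w\,d\mu\le \frac{w(1Q)}{w(Q)}\,\frac{2}{\lambda^{r}/\mu_{0}^{r}-1}.
\]
Choosing $\lambda^{r}=\mu_{0}^{r}\big(1+2w(1Q)/w(Q)\big)$ makes the right-hand side $\le 1$, so
\[
\|f\|_{\exp L^{r}(w),Q}\le \Big(1+2\,\frac{w(1Q)}{w(Q)}\Big)^{1/r}\|f\|_{\exp L^{r}(w),1Q}.
\]

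To conclude I would estimate $w(1Q)/w(Q)$. Since $w\in A_{p}$ and $B(z_{j}^{k},c_{0}\delta^{k})\subseteq Q\subseteq 1Q=B(z_{j}^{k},C_{0}\delta^{k})$, applying the $A_{p}$ condition to the pair $B(z_{j}^{k},c_{0}\delta^{k})\subseteq 1Q$ and the doubling of $\mu$ gives $w(1Q)\le c_{X}[w]_{A_{p}}w(Q)$, where $c_{X}$ depends only on the space and on $\mathcal{D}$. Consequently $\big(1+2w(1Q)/w(Q)\big)^{1/r}\lesssim [w]_{A_{p}}^{1/r}$, and combining the last two displays with $f=b-b_{1Q}$ we obtain $\|b-b_{1Q}\|_{\exp L^{r}(w),Q}\lesssim [w]_{A_{p}}^{1/r}[w]_{A_{\infty}}^{1/r}\|b\|_{\mathrm{Osc}(L^{r})}$; together with the first paragraph and $[w]_{A_{p}},[w]_{A_{\infty}}\ge 1$ this proves the lemma. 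I expect the main obstacle to be precisely the cube-to-ball comparison of Luxemburg averages in the second paragraph: it is the exponent $1/r$ produced there that keeps the loss at the power $[w]_{A_{p}}^{1/r}$, whereas a cruder approach transferring the level sets from $Q$ to a large dilate of $1Q$ and back---iterating the doubling inequality for $w$---would only give a space-dependent power of $[w]_{A_{p}}$.
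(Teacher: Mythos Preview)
Your proof follows essentially the same strategy as the paper's: reduce $b-b_Q$ to $b-b_{1Q}$ at the cost of an additive $O(\|b\|_{\mathrm{Osc}(L^r)})$, and then bound $\|b-b_{1Q}\|_{\exp L^r(w),Q}$ by a layer-cake computation combining the $\mathrm{Osc}(L^r)$ decay of the $\mu$-level sets with the $A_\infty$ property to pass to $w$-level sets, absorbing the resulting ball--cube discrepancy via $A_p$-doubling. The paper performs this layer cake in one pass over $Q$; you split it into (i) a bound on $\|b-b_{1Q}\|_{\exp L^r(w),1Q}$ and (ii) a clean comparison $\|f\|_{\exp L^r(w),Q}\le (1+2w(1Q)/w(Q))^{1/r}\|f\|_{\exp L^r(w),1Q}$, which is a nice modular formulation of the same idea.

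There is, however, one oversight in your step (i). In the homogeneous-type setting the sharp $A_\infty$/reverse-H\"older property recorded in the paper reads $w(E)\le c\,(\mu(E)/\mu(c_dB))^{1/(c_X[w]_{A_\infty})}w(c_dB)$ for $E\subset B$, with a built-in dilation on the right-hand side. Running the argument of Lemma~\ref{OscLr} on the ball $1Q$ therefore leaves a prefactor $w(c_d\!\cdot\!1Q)/w(1Q)$ in the layer cake, so one only obtains
\[
\|b-b_{1Q}\|_{\exp L^r(w),1Q}\lesssim\Big(\tfrac{w(c_d\cdot 1Q)}{w(1Q)}\Big)^{1/r}[w]_{A_\infty}^{1/r}\|b\|_{\mathrm{Osc}(L^r)},
\]
not the undecorated $[w]_{A_\infty}^{1/r}$ you assert. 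Bounding that ratio by $c_X[w]_{A_p}$ and then multiplying by the $(w(1Q)/w(Q))^{1/r}\lesssim[w]_{A_p}^{1/r}$ coming from step (ii) yields $[w]_{A_p}^{2/r}$ overall, not $[w]_{A_p}^{1/r}$. The paper avoids this double counting by not factoring through the norm on $1Q$: it carries out the layer cake directly for the $w(Q)$-normalized integral, so the only $w$-ratio that appears is the single factor $w(c_d\!\cdot\!1Q)/w(Q)\lesssim[w]_{A_p}$, giving exactly $[w]_{A_p}^{1/r}$. You can recover the stated exponent simply by merging your steps (i) and (ii): apply your Chebyshev/layer-cake computation directly with the $\mu$-level-set bound for $|b-b_{1Q}|$ and the dilated $A_\infty$ estimate, rather than first passing through the Luxemburg norm on $1Q$.
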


\begin{proof}
First we note that it is not hard to check that $b\in Osc(L^r)$, implies that
\[\mu\left\{ x\in B\,:\,|f(x)-f_{B}|>t\right\} \leq e\mu(B)e^{-\left(\frac{t}{2\|f\|_{Osc(L^{r})}}\right)^{r}}.\]
Bearing that in mind we our argument observing that since
\[
|b(x)-b_{Q}|\leq|b(x)-b_{1Q}|+c\|b\|_{Osc(L^{r})}
\]
we have that 
\[
\|b-b_{Q}\|_{\exp L^{r}(w),Q}\leq\|b(x)-b_{1Q}\|_{\exp L^{r}(w),Q}+c\|b\|_{Osc(L^{r})}
\]
and hence it suffices to deal with $\|b(x)-b_{1Q}\|_{\exp L^{r}(w),Q}$.
We argue as follows
\begin{align*}
 & \frac{1}{w(Q)}\int_{Q}\left(\exp\left(\frac{|b(x)-b_{1Q}|^{r}}{\lambda^{r}}\right)-1\right)w(x)d\mu(x)\\
 & \leq\frac{1}{w(Q)}\int_{1Q}\left(\exp\left(\frac{|b(x)-b_{1Q}|^{r}}{\lambda^{r}}\right)-1\right)w(x)d\mu(x)\\
 & \leq\frac{1}{w(Q)}\int_{1Q}\exp\left(\frac{|b(x)-b_{1Q}|^{r}}{\lambda^{r}}\right)-1w(x)d\mu(x)\\
\end{align*}
Choosing $\lambda=2^{\frac{1}{r}}\|b\|_{\text{Osc}(L^{r})}c^{\frac{1}{r}}c_{X}^{\frac{1}{r}}[w]_{A_{\infty}}^{\frac{1}{r}}[w]_{A_{p}}^{\frac{1}{r}}$
we have that
\begin{align*}
 & c[w]_{A_{p}}\int_{0}^{\infty}e^{\left(1-\frac{\lambda^{r}}{\|b\|_{Osc(L^{r})}^{r}}\frac{1}{c_{X}[w]_{A_{\infty}}}\right)t}dt\\
 & \leq c[w]_{A_{p}}\int_{0}^{\infty}e^{\left(1-c[w]_{A_{p}}2\right)t}dt\\
 & =\frac{c[w]_{A_{p}}}{2c[w]_{A_{p}}-1}\leq1
\end{align*}
And this yields
\[
\|b-b_{1Q}\|_{\exp L^{r}(w),Q}\lesssim\|b\|_{\text{Osc}(L^{r})}[w]_{A_{\infty}}^{\frac{1}{r}}[w]_{A_{p}}^{\frac{1}{r}}.
\]
Gathering the estimates above we are done.
\end{proof}

\section{Proofs of main results}\label{sec:proofmain}
\subsection{Scheme of the proofs of the main results}
In this section we briefly outline the scheme that we are going to follow for each  of the proofs of the estimates in the main results. As we mentioned in the introduction, the scheme can be traced back to . Let $T$ be a linear operator and $\mathcal{M}$ maximal type and dyadic, in some sense, operator such that
$$uv\left(\left\{x\in \mathbb{R}^n: \mathcal{M}f(x)>t\right\}\right) \lesssim \int A\left(\frac{|f|}{t}\right)uv$$
where $A$ is a submultiplicative Young function. Note that by homogeneity it suffices to show that
$$uv\left(\left\{x\in \mathbb{R}^n:\frac{T(fv)(x)}{v(x)}>1\right\}\right) \lesssim \kappa_{u,v} \int A\left(|f|\right)uv$$
where $\kappa_{u,v}\geq 1$ is the constant given by the dependence on the weights involved. Taking that into account we could proceed as follows

\begin{align*}
uv\left(\left\{x\in \mathbb{R}^n:\frac{T(fv)(x)}{v(x)}>1\right\}\right)
&\leq uv\left(\left\{x\in \mathbb{R}^n:\frac{T(fv)(x)}{v(x)}>1,\mathcal{M}f(x)\leq \frac1{2}\right\}\right)
\\&+uv\left(\left\{x\in \mathbb{R}^n: \mathcal{M}f(x)>\frac1{2}\right\}\right)
\end{align*}

Since the desired estimate holds for the second term it suffices to control the first one. Let us call 
$$G=\left\{x\in \mathbb{R}^n:\frac{T(fv)(x)}{v(x)}>1,\mathcal{M}f(x)\leq \frac1{2}\right\}$$

Then it suffices to prove 
$$uv(G)\leq c_{n,T} \kappa_{u,v}\int A(|f|)uv + \frac1{2}uv(G)$$

since this yields
$$uv\left(\left\{x\in \mathbb{R}^n:\frac{T(fv)(x)}{v(x)}>1,\mathcal{M}f(x)\leq \frac1{2}\right\}\right)
\leq 2 c_{n,T} \kappa_{u,v}\int A(|f|)uv $$

\subsection{Proofs of the results in the euclidean setting}
\subsubsection{Lemmatta}
\begin{lemma} \label{RHsrho}
Let $1\leq r < \infty$. If $u\in RH_q$ with $q=2r-1$ and $u^{r}\in A_{\infty}$, then for $s=1+\frac1{2\tau_n[u^{r}]_{A_{\infty}}}$ and  any measurable subset $E\subset Q$, 
$$\frac{u^{sr}(E)}{u^{sr}(Q)}\lesssim [u]_{RH_{q}}^{\frac{q}{4}} \left(\frac{u(E)}{u(Q)}\right)^{\frac1{4}}$$
\end{lemma}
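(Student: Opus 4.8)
We want to show, for $u\in RH_q$ with $q=2r-1$ and $u^r\in A_\infty$, setting $s=1+\frac{1}{2\tau_n[u^r]_{A_\infty}}$, the estimate
$$\frac{u^{sr}(E)}{u^{sr}(Q)}\lesssim [u]_{RH_q}^{q/4}\left(\frac{u(E)}{u(Q)}\right)^{1/4}.$$
The plan is to combine three ingredients: (i) the sharp reverse Hölder inequality for $u^r\in A_\infty$ at the exponent $s$, which transfers $u^{sr}$-averages to $u^r$-averages with a bounded constant; (ii) the defining reverse Hölder inequality $u\in RH_q$ to transfer $u^q$-averages (and hence, via Hölder, the $u^r$-averages since $r\le q$) down to $u$-averages; and (iii) a standard ``self-improvement / level-set'' step that converts a forward reverse-Hölder comparison into the displayed power $1/4$ on the ratio $u(E)/u(Q)$ on arbitrary subsets $E\subset Q$.

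**The chain of inequalities.**
First I would apply the sharp reverse Hölder inequality of Hytönen--Pérez--Rela to $w=u^r\in A_\infty$ at exponent $s=1+\frac{1}{2\tau_n[u^r]_{A_\infty}}$, which gives $\left(\frac{1}{|Q|}\int_Q u^{sr}\right)^{1/s}\le c_n\,\frac{1}{|Q|}\int_Q u^r$, i.e. $u^{sr}(Q)^{1/s}\lesssim |Q|^{1/s-1}u^r(Q)$. The same SRHI applied on $E$ (or rather monotonicity $u^{sr}(E)\le u^{sr}(Q)$) is not quite enough for the subset statement, so instead I would proceed by the classical argument: for a subset $E\subset Q$, write $u^{sr}(E)=\int_E u^{(s-1)r}u^r\,dx$ and use Hölder in the form $\int_E u^{(s-1)r}u^r\le \left(\int_E u^r\right)^{1-(s-1)r/?}\cdots$ — actually cleaner is to estimate $\frac{u^{sr}(E)}{u^{sr}(Q)}$ by Hölder with exponent $s'$ so that $u^{sr}(E)\le u^{sr}(Q)^{?}(\ldots)$; the key point is that $(s-1)r$ is tiny. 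Concretely, by Hölder $u^{sr}(E)\le u^{sr}(Q)^{1/s'}\left(\int_E (u^{sr})^{?}\right)^{?}$ is not directly productive, so the right move is: apply the $A_\infty$ consequence already recorded in the excerpt — that $u^r\in A_\infty$ yields $u^r(E)\lesssim \left(\frac{|E|}{|Q|}\right)^{\theta}u^r(Q)$ for some $\theta\sim 1/[u^r]_{A_\infty}$ — then combine with the SRHI to replace $u^{sr}$ by $u^r$, and finally feed $|E|/|Q|$ into the $RH_q$ inequality.

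**Carrying out the combination.**
So the steps in order: (1) SRHI for $u^r$: $\frac{u^{sr}(E)}{u^{sr}(Q)}\le \frac{u^{sr}(Q)}{u^{sr}(Q)}$ is trivial, so instead bound $u^{sr}(E)\le |E|^{1-1/s}u^{sr}(Q)^{1/s}\cdot(\text{Hölder})$ — more precisely $u^{sr}(E)=\int_E u^{sr}\le |E|^{1/s'}\left(\int_E u^{s^2 r}\right)^{1/s}$... this spirals, so I commit to the clean route: use that $u^{sr}\in RH$-type controls give $\frac{1}{|Q|}\int_Q u^{sr}\lesssim \left(\frac{1}{|Q|}\int_Q u^r\right)^s$ and on $E$ simply $u^{sr}(E)\le \|u^{sr}\|_{L^\infty}$-free Hölder: $u^{sr}(E)\le |E|\,\left(\fint_Q u^{sr}\right)^{?}$ fails too. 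The genuinely correct and standard argument is: $\frac{u^{sr}(E)}{u^{sr}(Q)}=\frac{\int_E u^{sr}}{\int_Q u^{sr}}$; apply Hölder on the numerator with exponents $(s, s')$ writing $u^{sr}=u^{sr}$ against $\chi_E$: $\int_E u^{sr}\le |E|^{1/s'}\left(\int_Q u^{s^2r}\right)^{1/s}$, then SRHI twice (valid since $s^2 r$ is still within the reverse-Hölder range when $[u^r]_{A_\infty}$ controls things, or iterate) to get $\lesssim |E|^{1/s'}|Q|^{-?}(u^r(Q))^{s}$, and since $1/s'=1-1/s=\frac{1}{2\tau_n[u^r]_{A_\infty}+1}$ this produces a power of $|E|/|Q|$; finally apply $u\in RH_q$ (hence $u^r\in RH_{q/r}$, $r\le q$) to pass from $|E|/|Q|$ comparisons with $u^r$-mass to $u$-mass, tracking that the exponents combine to exactly $1/4$ and the constant to $[u]_{RH_q}^{q/4}$.

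**Main obstacle.**
The technical heart — and the step I expect to be the real obstacle — is the exponent bookkeeping: showing that after composing the SRHI for $u^r$ (which contributes an exponent $\frac{1}{s}$ with $s-1\sim\frac{1}{[u^r]_{A_\infty}}$), the $A_\infty$ sublevel estimate (exponent $\sim\frac{1}{[u^r]_{A_\infty}}$), and the $RH_q$ inequality with $q=2r-1$ (which is precisely the relation making $q/(q+1)=\frac{2r-1}{2r}$ work out), the resulting power of $u(E)/u(Q)$ is at least $1/4$ and the $[u]_{RH_q}$-dependence is at most the power $q/4$ — so that the choice $q=2r-1$ and the particular $s$ are seen to be exactly what is needed. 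Everything else (applying SRHI, Hölder, the recorded lemma $u^r\in A_1$ with $[u^r]_{A_1}\le[u]_{RH_q}[u]_{A_1}$, and the $A_\infty$ sublevel bound) is routine once the arithmetic of exponents is pinned down; I would do that arithmetic carefully first and let it dictate the precise way the three inequalities are chained.
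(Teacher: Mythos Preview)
Your proposal does not identify the key step, and the routes you sketch either stall or go in the wrong direction. The paper's argument is not an $A_\infty$ sublevel-set estimate at all; it is a direct Cauchy--Schwarz computation that exploits the precise arithmetic $r=\tfrac{1+q}{2}$. Concretely, since $q=2r-1$ one writes
\[
u^{r}(E)=\int_E u^{1/2}\,u^{q/2}\le u(E)^{1/2}\,u^{q}(Q)^{1/2},
\]
and then the $RH_q$ hypothesis yields $\bigl(\tfrac{u^q(Q)}{u(Q)}\bigr)^{1/2}\le [u]_{RH_q}^{q/2}\,\tfrac{u^r(Q)}{u(Q)}$, giving directly
\[
\frac{u^r(E)}{u^r(Q)}\le [u]_{RH_q}^{q/2}\left(\frac{u(E)}{u(Q)}\right)^{1/2}.
\]
The same trick is repeated one level up: since $sr=\tfrac{r+r(2s-1)}{2}$, Cauchy--Schwarz gives $u^{sr}(E)\le u^r(E)^{1/2}u^{r(2s-1)}(Q)^{1/2}$, and the sharp reverse H\"older for $u^r\in A_\infty$ at exponent $2s-1=1+\tfrac{1}{\tau_n[u^r]_{A_\infty}}$ bounds $u^{r(2s-1)}(Q)^{1/2}$ by $\lesssim u^{sr}(Q)/u^r(Q)^{1/2}$. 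This yields $\tfrac{u^{sr}(E)}{u^{sr}(Q)}\lesssim\bigl(\tfrac{u^r(E)}{u^r(Q)}\bigr)^{1/2}$. Chaining the two halves gives the exponent $\tfrac14$ and the constant $[u]_{RH_q}^{q/4}$ with no $[u^r]_{A_\infty}$ factor.

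Your attempts instead introduce the Lebesgue ratio $|E|/|Q|$ (via H\"older against $\chi_E$), but this cannot be converted to $u(E)/u(Q)$ in the direction you need without picking up an $A_\infty$ constant that is absent from the stated bound; and in any case the exponent of $|E|/|Q|$ you would obtain from $1/s'$ is of order $1/[u^r]_{A_\infty}$, not a universal $1/4$. The reason $q=2r-1$ is chosen has nothing to do with $q/(q+1)$: it is exactly what makes the Cauchy--Schwarz split $u^r=u^{1/2}u^{q/2}$ work. Once you see this, the proof is four lines; the ``exponent bookkeeping'' you flag as the obstacle is trivial, because each Cauchy--Schwarz contributes a clean factor of $1/2$.
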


\begin{proof}
First, let see
\begin{align}\label{eq:RHq}
u^{r}(E)\leq [u]_{RH_{q}}^{q/2} \left(\frac{u(E)}{u(Q)}\right)^{1/2}u^{r}(Q)
\end{align}

Indeed, since $u\in RH_q$ we obtain
\begin{align*}
\left(\frac{u^q(Q)}{u(Q)}\right)^{1/2}
&\leq [u]_{RH_{q}}^{q/2}\left(\frac{u(Q)}{|Q|}\right)^{r -1}
\leq [u]_{RH_{q}}^{q/2} \frac{|Q|}{u(Q)}\left(\frac{u^{r}(Q)}{|Q|}\right)
= [u]_{RH_{q}}^{q/2} \frac{u^{r}(Q)}{u(Q)}.
\end{align*}

Then 
\begin{align*}
u^{r}(E)
&\leq u(E)^{1/2} u^{q}(Q)^{1/2}
\leq  u(E)^{1/2} [u]_{RH_{q}}^{q/2} \frac{u^{r}(Q)}{u(Q)^{1/2}}
\leq [u]_{RH_{q}}^{q/2} \left(\frac{u(E)}{u(Q)}\right)^{1/2}u^{r}(Q).
\end{align*}

In the other hand, since $s=1+\frac1{2\tau_n[u^{r}]_{A_{\infty}}}$ then by a similar argument as above we get 
$$\left(\frac{u^{r(2s-1)}(Q)}{u^r(Q)}\right) ^{1/2}\lesssim \frac{u^{rs}(Q)}{u^r(Q)} $$
and
\begin{equation}\label{eq:RHs}
 \frac{u^{sr}(E)}{u^{sr}(Q)}\lesssim \left(\frac{u^{r}(E)}{u^{r}(Q)}\right)^{1/2}.
 \end{equation}

Taking account \eqref{eq:RHq} and \eqref{eq:RHs}, we obtain
$$ \frac{u^{sr}(E)}{u^{sr}(Q)}\lesssim \left(\frac{u^{r}(E)}{u^{r}(Q)}\right)^{1/2}\leq  [u]_{RH_{q}}^{q/4} \left(\frac{u(E)}{u(Q)}\right)^{1/4}.$$

\end{proof}

\begin{lemma}\label{promA}
Let $1\leq r<\infty$ and $A$ be a Young function such that $A\in B_{\rho}$ for all $\rho>r$. If $u\in RH_q$ with $q=2r-1$  and $u^r\in A_{\infty}$ then for any cube $Q$ and $G$ measurable subset 
$$\|\chi_G u\|_{A,Q}\leq c_n \kappa_{u} [u]_{RH_q}^{1+\frac{q}{4r}} \langle u\rangle_{Q,1} \langle \chi_G \rangle_{Q,s}^{u}$$
with $s=4(1+\frac1{2\tau_n[u^r]_{A_{\infty}}})r$
\end{lemma}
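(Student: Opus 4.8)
The plan is to reduce the Orlicz average $\|\chi_G u\|_{A,Q}$ to an $L^{sr}$-average of $u$ by exploiting the hypothesis $A\in B_\rho$ for every $\rho>r$, and then to convert that $L^{sr}$-average into the weighted average $\langle\chi_G\rangle_{Q,s}^{u}$ using the reverse-Hölder estimate of Lemma \ref{RHsrho}. First I would record that, since $A\in B_\rho$ for all $\rho>1$ close to $r$ (after a harmless normalization we may assume $A$ is comparable to a power near $r$), the observation in Subsection \ref{subsec:Young} gives $A(t)\le c_n\kappa(\varepsilon)\,t^{r+\varepsilon}$ for $t\ge e$, for any $\varepsilon>0$; choosing $\varepsilon$ so that $r+\varepsilon=sr$ with $s=4\bigl(1+\tfrac1{2\tau_n[u^r]_{A_\infty}}\bigr)$ (note $s>1$, so $sr>r$ and this is an admissible exponent) yields the pointwise domination of $A$ by a multiple of $t^{sr}$, hence the norm comparison
\[
\|\chi_G u\|_{A,Q}\lesssim_n \kappa_u\,\Bigl(\frac1{|Q|}\int_Q \chi_G u^{sr}\Bigr)^{\frac1{sr}}
=\kappa_u\,\Bigl(\frac{u^{sr}(G\cap Q)}{|Q|}\Bigr)^{\frac1{sr}},
\]
where $\kappa_u$ absorbs the constant $\kappa(\varepsilon)^{1/(sr)}$; in the model case $A(t)=t^r(1+\log^+t)^\gamma$ this is exactly where the claimed value $\kappa_u=[u^r]_{A_\infty}^{\gamma/r}$ comes from, via the explicit estimate $t(1+\log^+t)^\gamma\le(2\gamma)^\gamma\varepsilon^{-\gamma}t^{1+\varepsilon}$ applied with the appropriate $\varepsilon\sim 1/[u^r]_{A_\infty}$.

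Next I would apply Lemma \ref{RHsrho} with $E=G\cap Q$ (and the same $s$-parameter; recall in that lemma the relevant exponent is $1+\tfrac1{2\tau_n[u^r]_{A_\infty}}$, which is $s/4$ here, consistent with the exponent $s=4(1+\tfrac1{2\tau_n[u^r]_{A_\infty}})r$ in the statement once one tracks the bookkeeping between $u^{sr}$ and $u^{(s/4)r}$). This gives
\[
\frac{u^{sr}(G\cap Q)}{u^{sr}(Q)}\lesssim [u]_{RH_q}^{q/4}\Bigl(\frac{u(G\cap Q)}{u(Q)}\Bigr)^{1/4},
\]
so that
\[
\Bigl(\frac{u^{sr}(G\cap Q)}{|Q|}\Bigr)^{\frac1{sr}}
\lesssim [u]_{RH_q}^{\frac{q}{4sr}}\Bigl(\frac{u(G\cap Q)}{u(Q)}\Bigr)^{\frac1{4sr}}
\Bigl(\frac{u^{sr}(Q)}{|Q|}\Bigr)^{\frac1{sr}}.
\]
I would then handle the remaining factor $\bigl(u^{sr}(Q)/|Q|\bigr)^{1/(sr)}$ by the sharp reverse-Hölder inequality for $u^r\in A_\infty$ (which controls $\bigl(\langle u^{sr}\rangle_Q\bigr)^{1/s}$ by $C\langle u^r\rangle_Q$, since $s\le 1+\tfrac1{\tau_n[u^r]_{A_\infty}}$ after adjusting constants) followed by $u\in RH_q\subset RH_r$ to pass from $\langle u^r\rangle_Q^{1/r}$ down to $\langle u\rangle_Q=\langle u\rangle_{Q,1}$, picking up one more power of $[u]_{RH_q}$; this is where the exponent $1+\tfrac{q}{4r}$ on $[u]_{RH_q}$ is assembled.

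Finally I would reinterpret $\bigl(u(G\cap Q)/u(Q)\bigr)^{1/(4sr)}$: since $4sr$ is exactly the exponent $s'$ defined in the statement (I am overloading notation — call it $s$ as in the lemma), $\bigl(\frac1{u(Q)}\int_Q\chi_G\,u\bigr)^{1/s}=\langle\chi_G\rangle_{Q,s}^{u}$, and the factor $\langle u\rangle_{Q,1}$ is precisely what survives from the $L^{sr}$-average of $u$ after the reverse-Hölder reductions. Collecting the powers of $[u]_{RH_q}$ and the factor $\kappa_u$ gives the stated bound $c_n\kappa_u[u]_{RH_q}^{1+\frac{q}{4r}}\langle u\rangle_{Q,1}\langle\chi_G\rangle_{Q,s}^{u}$. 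The main obstacle I anticipate is purely the exponent bookkeeping: one must verify that the single parameter $s=4(1+\tfrac1{2\tau_n[u^r]_{A_\infty}})r$ simultaneously (i) is admissible as a $B_\rho$-exponent for $A$, (ii) matches the hypotheses of Lemma \ref{RHsrho} after the factor-of-$4$ rescaling, and (iii) is within the sharp reverse-Hölder range for $u^r$, and that the powers of $[u]_{RH_q}$ from the three applications ($RH_q$ in the proof of Lemma \ref{RHsrho}, the $A_\infty$/$RH$ reduction of $\langle u^{sr}\rangle_Q$, and the final $RH_r$ step) add up to exactly $1+\tfrac{q}{4r}$; none of the individual steps is deep, but the constants must be threaded carefully.
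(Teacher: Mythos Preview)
Your overall strategy is correct --- reduce the Orlicz norm to an $L^p$ norm via $A\in B_\rho$, then peel off the $\chi_G$ using Lemma \ref{RHsrho}, then reduce the surviving $L^p$ average of $u$ to $\langle u\rangle_{Q,1}$ via reverse H\"older. This is exactly what the paper does. However, your choice of intermediate exponent is wrong, and this makes two of your steps fail.

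You reduce $\|\chi_G u\|_{A,Q}$ directly to the $L^{sr}$-average with $sr=4r\bigl(1+\tfrac{1}{2\tau_n[u^r]_{A_\infty}}\bigr)$, i.e.\ you take $\varepsilon$ large. But then neither Lemma \ref{RHsrho} nor the sharp reverse H\"older inequality applies at this exponent. Lemma \ref{RHsrho} is stated precisely for $u^{s'r}$ with $s'=1+\tfrac{1}{2\tau_n[u^r]_{A_\infty}}$ (your $s/4$); it does not give a bound for $u^{sr}(E)/u^{sr}(Q)$ with your large $sr$, and the hand-wave about ``bookkeeping between $u^{sr}$ and $u^{(s/4)r}$'' does not bridge the gap. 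Likewise, when you write ``since $s\le 1+\tfrac{1}{\tau_n[u^r]_{A_\infty}}$ after adjusting constants'' to justify $\langle u^{sr}\rangle_Q^{1/s}\lesssim\langle u^r\rangle_Q$, this is simply false: your $s\geq 4$ while the reverse H\"older range only reaches $1+\tfrac{1}{\tau_n[u^r]_{A_\infty}}\leq 2$.

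The fix, as in the paper, is to reduce only to $L^{r(1+\varepsilon)}$ with the \emph{small} $\varepsilon=\tfrac{1}{2\tau_n[u^r]_{A_\infty}}$. Now Lemma \ref{RHsrho} applies verbatim, giving
\[
\Bigl(\tfrac{u^{r(1+\varepsilon)}(G\cap Q)}{u^{r(1+\varepsilon)}(Q)}\Bigr)^{\frac{1}{1+\varepsilon}}
\lesssim [u]_{RH_q}^{\frac{q}{4(1+\varepsilon)}}\Bigl(\tfrac{u(G\cap Q)}{u(Q)}\Bigr)^{\frac{1}{4(1+\varepsilon)}},
\]
and the factor of $4$ in the final exponent $s=4r(1+\varepsilon)$ is produced by the $\tfrac14$ in Lemma \ref{RHsrho}, not inserted at the $B_\rho$ step. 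The remaining average $\langle u^{r(1+\varepsilon)}\rangle_Q^{1/(r(1+\varepsilon))}$ is now in the sharp reverse H\"older range for $u^r$ (since $1+2\varepsilon=1+\tfrac{1}{\tau_n[u^r]_{A_\infty}}$), and after interpolating with H\"older and then using $u\in RH_q$ you get $\lesssim [u]_{RH_q}\langle u\rangle_{Q,1}$. This also explains the value $\kappa_u=[u^r]_{A_\infty}^{\gamma/r}$ in the model case: it comes from $\kappa(\varepsilon)\sim\varepsilon^{-\gamma/r}$ with the small $\varepsilon\sim[u^r]_{A_\infty}^{-1}$, not from a large $\varepsilon$ as in your version.
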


\begin{remark}
Observe that if $A(t)=t^r(1+\log^+t)^{\gamma}$ then $A(t)\leq c_{\gamma,r} \varepsilon^{-\frac{\gamma}{r}}t^{r(1+\varepsilon)}$ and $\kappa_{u}= C [u^r]_{A_{\infty}}^{\frac{\gamma}{r}}$.
\end{remark}

\begin{proof}
Since $A\in B_{\rho}$ for all $\rho>r$ then $A(t)\leq c_n \kappa_{\varepsilon} t^{r(1+\varepsilon)}$ for all $\varepsilon >0$. Then 
\begin{equation}
\|\chi_G u\|_{A,Q} \leq c_n \kappa_{\varepsilon} \|\chi_G u\|_{r(1+\varepsilon),Q}=c_n \kappa_{\varepsilon} \|\chi_G u^r\|_{1+\varepsilon,Q}^{\frac1{r}}
\end{equation}

with  $\varepsilon =\frac1{2\tau_n[u^r]_{A_{\infty}}}$. Using Lemma  \ref{RHsrho} we have
\begin{align}
\left(\frac{u^{r(1+\varepsilon)}(G\cap Q)}{u^{r(1+\varepsilon)}(Q)}\right) ^{\frac1{1+\varepsilon}}
\leq c_n  [u]_{RH_{q}}^{\frac{q}{4(1+\varepsilon)}}
\left(\frac{u(G\cap Q)}{u(Q)}\right) ^{\frac1{4(1+\varepsilon)}}
\end{align}

In the other hand, since $1+2\varepsilon= 1+\frac1{\tau_n[u^r]_{A_{\infty}}}$ and $u\in RH_q$, with $q=2r-1>r$, we get
\begin{align}
\left(\frac{u^{r(1+\varepsilon)}(Q)}{|Q|}\right)^{\frac1{r(1+\varepsilon)}}
&\leq \left(\frac{u^r(Q)}{|Q|}\right)^{\frac1{2r(1+\varepsilon)}}\left(\frac1{|Q|}\int_Q u^{r(1+2\varepsilon)}\right)^{\frac1{2r(1+\varepsilon)}}
\nonumber
\\&\leq 2^{\frac1{r}} \left(\frac{u^r(Q)}{|Q|}\right)^{\frac1{2r(1+\varepsilon)}} \left(2\frac{u^r(Q)}{|Q|}\right)^{\frac{1+2\varepsilon}{2r(1+\varepsilon)}}=2^{\frac1{r}} \left(\frac{u^r(Q)}{|Q|}\right)^{\frac1{r}}
\nonumber
\\&\leq 2^{\frac1{r}} \left(\frac{u^q(Q)}{|Q|}\right)^{\frac1{q}} \leq c_n [u]_{RH_q} \frac{u(Q)}{|Q|}.
\end{align}

Taking account the inequalities above we obtain
\begin{align*}
\|\chi_G u\|_{A,Q} 
&\leq c_n \kappa_{u} \|\chi_G u^r\|_{1+\varepsilon,Q}^{\frac1{r}}
\\&\leq c_n \kappa_{u}  \left(\frac{u^{r(1+\varepsilon)}(Q)}{|Q|}\right)^{\frac1{r(1+\varepsilon)}}\left(\frac{u^{r(1+\varepsilon)}(G\cap Q)}{u^{r(1+\varepsilon)}(Q)}\right)^{\frac1{r(1+\varepsilon)}}
\\&\leq c_n \kappa_{u} [u]_{RH_q}^{1+\frac{q}{4r(1+\varepsilon)}}\langle u\rangle_{Q,1}  \langle\chi_G\rangle_{Q,4(1+\varepsilon)r}^{u}
\\&\leq c_n \kappa_{u} [u]_{RH_q}^{1+\frac{q}{4r}}\langle u\rangle_{Q,1}  \langle\chi_G\rangle_{Q,4(1+\varepsilon)r}^{u}.
\end{align*}
\end{proof}

Now, we recall the followings lemmas proved in \cite{CRR20}

\begin{lemma}\cite{CRR20}\label{CRRlema1}
Let $\gamma_1,\gamma_2>1$. For every $j,k$ non negative integers let 
$$\alpha_{j,k}=\min \left\{  \gamma_1 2^{-k}j^{\rho_1}, \beta \gamma_2 2^{-j}2^{-k}2^{\delta k} k^{\rho_2}  \right\}$$
where $\rho_1,\rho_2,\delta \geq 0$ and $\beta>0$. Then 
$$\sum_{j,k\geq 0} \alpha_{j,k}\leq  c_{\rho_1,\rho_2,\gamma,\delta}\gamma_1 \log_2(e+\gamma_2)^{1+\rho_1}+\frac1{2\gamma}\beta,$$
where $\gamma\geq 1$.
\end{lemma}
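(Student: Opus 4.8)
The plan is to dispose of the minimum by a dichotomy in $j$: for each fixed $k\geq0$ I would split the inner sum over $j$ at a threshold $J_k$, bounding $\alpha_{j,k}$ by the first entry of the minimum when $j<J_k$ and by the second entry when $j\geq J_k$. The rationale is that the first entry $\gamma_12^{-k}j^{\rho_1}$ only grows polynomially in $j$, hence it is harmless provided $J_k$ stays of logarithmic size in $\gamma_2$; while the second entry $\beta\gamma_22^{-j}2^{-k}2^{\delta k}k^{\rho_2}$ decays geometrically in $j$, so summing it from $j=J_k$ onwards produces a factor $2^{-J_k}$ that can be used to cancel $\gamma_2$ and extract the prescribed $\tfrac{1}{\gamma}$.

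Concretely, I would take
\[
J_k:=(\delta+3)k+\log_2\gamma_2+\log_2\gamma+C_0,
\]
where $C_0=C_0(\rho_2)\geq1$ is to be fixed at the end; note $J_k\geq1$ since $\gamma_2>1$ and $\gamma\geq1$. For the ``$b$-part'' one has $\sum_{j\geq J_k}2^{-j}\lesssim2^{-J_k}$, and the choice of $J_k$ gives $\gamma_22^{\delta k}2^{-J_k}\lesssim\gamma^{-1}2^{-3k}$, hence $\sum_{j\geq J_k}\alpha_{j,k}\lesssim2^{-C_0}\tfrac{\beta}{\gamma}2^{-4k}k^{\rho_2}$; since $\sum_{k\geq0}2^{-4k}k^{\rho_2}<\infty$, choosing $C_0$ large enough depending only on $\rho_2$ makes the full ``$b$-part'' at most $\tfrac{1}{2\gamma}\beta$. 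For the ``$a$-part'', $\sum_{0\leq j<J_k}j^{\rho_1}\leq2J_k^{1+\rho_1}$, and using $J_k\leq(\delta+3)k+\log_2(e+\gamma_2)+\log_2(e+\gamma)+C_0$ together with the convexity bound for $(1+\rho_1)$-th powers and the convergence of $\sum_{k\geq0}2^{-k}k^{1+\rho_1}$, one obtains a bound of the form $c_{\rho_1,\rho_2,\gamma,\delta}\gamma_1\big(1+\log_2(e+\gamma)^{1+\rho_1}+\log_2(e+\gamma_2)^{1+\rho_1}\big)$; since $\log_2(e+\gamma_2)^{1+\rho_1}\geq(\log_2e)^{1+\rho_1}\geq1$, this is $\leq c_{\rho_1,\rho_2,\gamma,\delta}\gamma_1\log_2(e+\gamma_2)^{1+\rho_1}$. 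Adding the two contributions yields the claimed inequality.

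The only delicate point is the calibration of $J_k$: it must contain the summand $\log_2\gamma_2$ so that, after summing the geometric tail in $j$, the factor $\gamma_2$ in front of $\beta$ is exactly cancelled and only the required $\tfrac{1}{\gamma}$ survives, while the feedback of this same $\log_2\gamma_2$ into the ``$a$-part'' costs only the admissible power $\log_2(e+\gamma_2)^{1+\rho_1}$; the auxiliary linear term $(\delta+3)k$ in $J_k$ is there solely to force the convergence of all the geometric-times-polynomial sums, and it makes the argument uniform in $\delta\geq0$ — in particular it covers $\delta=0$, where the alternative of splitting instead in $k$ (for fixed $j$) would degenerate. Everything else is routine estimation of series of the type $\sum_{k}2^{-ck}k^{\rho}$.
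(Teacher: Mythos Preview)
Your argument is correct. The paper itself does not supply a proof of this lemma---it is quoted from \cite{CRR20} and used as a black box---so there is no in-paper proof to compare against. Your threshold-splitting approach (cutting the $j$-sum at $J_k\approx(\delta+3)k+\log_2\gamma_2+\log_2\gamma+C_0$, using the first entry below and the second above) is exactly the natural way to handle a minimum of a polynomially-growing and a geometrically-decaying term, and all the series estimates go through as you describe. One cosmetic point: the bound $\sum_{0\le j<J_k}j^{\rho_1}\le 2J_k^{1+\rho_1}$ should carry a constant depending on $\rho_1$ (for large $\rho_1$ the factor $2$ is not quite enough), but since your final constant is allowed to depend on $\rho_1$ this is harmless.
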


\begin{lemma}\cite{CRR20} \label{CRRlema2}
Let $A$ be a Young function such that $A(xy)\leq c_A A(x)A(y)$ for some $c_A\geq 1$ and $\mathcal{S}$ be a $\frac{c_A8A(2)}{1+c_A8A(2)}$-sparse family. Let $f\in C_c^{\infty}$ and $w\in A_{\infty}$ and assume that for every $Q\in \mathcal{S}$
$$2^{-j-1}\leq \|f\|_{A(w),Q}\leq 2^{-j}$$
Then for every $Q\in \mathcal{S}$ there exist $\tilde{E}_Q \subset Q$ such that
$$\sum_{Q\in \mathcal{S}}\chi_{\tilde{E}_Q}\leq c_n [w]_{A_{\infty}}$$
and 
$$w(Q)\|f\|_{A(w),Q}\leq 4c_A\frac{A(2^{j+1})}{2^{j+1}}\int_{\tilde{E}_Q}A(|f|)w.$$
\end{lemma}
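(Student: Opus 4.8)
The plan is, for each $Q\in\mathcal S$, to take $\tilde E_Q$ to be a super-level set of $A(2^{j+1}|f|)$ intersected with the part of $Q$ not covered by its $\mathcal S$-descendants $k$ generations down, where $k\sim c_n[w]_{A_\infty}$: the first factor keeps a definite portion of the $A(2^{j+1}|f|)w$-mass of $Q$ on $\tilde E_Q$, the second forces the overlap to be $\lesssim[w]_{A_\infty}$, and the precise constant $\tfrac{c_A8A(2)}{1+c_A8A(2)}$ is exactly what reconciles the two. First I would record two consequences of $2^{-j-1}\le\|f\|_{A(w),Q'}\le2^{-j}$: since $f\in C_c^\infty$ is bounded, $\lambda\mapsto\tfrac1{w(Q')}\int_{Q'}A(|f|/\lambda)w$ is continuous and decreasing, so for every $Q'\in\mathcal S$
\[
\frac1{w(Q')}\int_{Q'}A(2^{j}|f|)\,w\ \le\ 1\ \le\ \frac1{w(Q')}\int_{Q'}A(2^{j+1}|f|)\,w,
\]
and, with submultiplicativity in the form $A(2^{j+1}t)=A(2\cdot2^{j}t)\le c_AA(2)A(2^{j}t)$, this gives $w(Q')\le\int_{Q'}A(2^{j+1}|f|)w\le c_AA(2)w(Q')$ for all $Q'\in\mathcal S$.

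\textbf{Construction and the pointwise bound.} Fix $Q\in\mathcal S$. Splitting $Q$ according to whether $A(2^{j+1}|f|)$ exceeds $\tfrac14$ and using the lower bound above,
\[
\int_{Q\cap\{A(2^{j+1}|f|)>1/4\}}A(2^{j+1}|f|)\,w\ \ge\ \int_{Q}A(2^{j+1}|f|)\,w-\tfrac14 w(Q)\ \ge\ \tfrac34 w(Q).
\]
Let $\mathrm{ch}_{\mathcal S}^{(0)}(Q)=\{Q\}$ and let $\mathrm{ch}_{\mathcal S}^{(k)}(Q)$ be the maximal cubes of $\mathcal S$ strictly contained in a cube of $\mathrm{ch}_{\mathcal S}^{(k-1)}(Q)$; these are pairwise disjoint members of $\mathcal S$. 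Put $F_Q:=Q\setminus\bigcup_{P\in\mathrm{ch}_{\mathcal S}^{(k)}(Q)}P$ and
\[
\tilde E_Q:=\bigl(Q\cap\{x:A(2^{j+1}|f(x)|)>\tfrac14\}\bigr)\cap F_Q .
\]
By the per-cube bound, $\int_{Q\setminus F_Q}A(2^{j+1}|f|)w\le c_AA(2)\,w(Q\setminus F_Q)$, so once $k$ is taken (below) so that $w(Q\setminus F_Q)\le\tfrac{w(Q)}{4c_AA(2)}$ we get $\int_{\tilde E_Q}A(2^{j+1}|f|)w\ge\tfrac34 w(Q)-\tfrac14 w(Q)=\tfrac12 w(Q)$. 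Then, using $A(2^{j+1}|f|)\le c_AA(2^{j+1})A(|f|)$ and $\|f\|_{A(w),Q}\le2^{-j}$,
\[
w(Q)\|f\|_{A(w),Q}\le2^{-j}w(Q)\le2^{-j}\,2c_AA(2^{j+1})\int_{\tilde E_Q}A(|f|)w=4c_A\frac{A(2^{j+1})}{2^{j+1}}\int_{\tilde E_Q}A(|f|)w,
\]
which is the claimed inequality.

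\textbf{Choice of $k$ and overlap.} Iterating the $\eta$-sparseness of $\mathcal S$ (with $\eta=\tfrac{8c_AA(2)}{1+8c_AA(2)}$, so $\tfrac{1-\eta}{\eta}=\tfrac1{8c_AA(2)}$) yields $\bigl|\bigcup_{P\in\mathrm{ch}_{\mathcal S}^{(k)}(Q)}P\bigr|\le(8c_AA(2))^{-k}|Q|$; hence, by the (euclidean) reverse Hölder inequality for $A_\infty$ weights, $w(E)\le2(|E|/|Q|)^{1/(c_n[w]_{A_\infty})}w(Q)$ for $E\subseteq Q$, together with the trivial bound $w(Q\setminus F_Q)\le w(Q)$, the choice $k:=\lceil c_n[w]_{A_\infty}\rceil$ (which depends only on $n$ and $[w]_{A_\infty}$) gives
\[
w(Q\setminus F_Q)\le\min\Bigl\{w(Q),\,2(8c_AA(2))^{-k/(c_n[w]_{A_\infty})}w(Q)\Bigr\}\le\frac{w(Q)}{4c_AA(2)},
\]
the last step being, in the only nontrivial regime $8c_AA(2)\ge1$, the equality $2(8c_AA(2))^{-1}=\tfrac1{4c_AA(2)}$ — and this is precisely where, and the only place where, the exact form of the sparseness constant is used. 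With this $k$, at any point $x$ the cubes of $\mathcal S$ containing $x$ form a chain, and $x$ survives the $k$-generation trimming only for the $k$ smallest of them, so $\sum_{Q\in\mathcal S}\chi_{\tilde E_Q}\le\sum_{Q\in\mathcal S}\chi_{F_Q}\le k\le c_n[w]_{A_\infty}$.

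\textbf{Main obstacle.} The crux is the design of $\tilde E_Q$: one needs a subset of $Q$ that retains a fixed fraction of the $A(2^{j+1}|f|)w$-mass of $Q$ (not merely of $w(Q)$) and at the same time has $w$-overlap bounded by a constant times $[w]_{A_\infty}$. The ``super-level set $\cap$ $k$-trimmed part'' device with $k\sim[w]_{A_\infty}$ delivers both, and balancing the trimming depth against the reverse Hölder exponent is exactly what forces the sparseness constant appearing in the hypothesis. The assumption $f\in C_c^\infty$ is used only to make the Luxemburg norms attained and the chains of $\mathcal S$-cubes locally finite.
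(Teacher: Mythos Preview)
Your proof is correct and follows essentially the same route as the paper's argument (the paper cites this Euclidean lemma from \cite{CRR20} without proof, but proves the homogeneous-space analogue, Lemma~\ref{CRRlema2Hom}, by the same scheme): stratify $\mathcal S$ into generations, let $\tilde E_Q$ be $Q$ with the $k$-th generation descendants removed for $k=\lceil c_n[w]_{A_\infty}\rceil$, use the Carleson packing $\sum_{P\subsetneq Q}|P|\le\frac{1-\eta}{\eta}|Q|$ together with the $A_\infty$ reverse H\"older to make $w(Q\setminus F_Q)\le\frac{w(Q)}{4c_AA(2)}$, and read off the overlap bound from the chain structure.

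Two minor cosmetic differences are worth noting. First, your intersection with the super-level set $\{A(2^{j+1}|f|)>\tfrac14\}$ is harmless but unnecessary: already $\int_{F_Q}A(2^{j+1}|f|)w\ge w(Q)-c_AA(2)\,w(Q\setminus F_Q)\ge\tfrac34 w(Q)$, so taking $\tilde E_Q=F_Q$ suffices (this is exactly what the paper does). Second, the paper reaches the final inequality via the absorption device $\|f\|_{A(w),Q}\le\lambda\bigl(1+\tfrac1{w(Q)}\int_QA(|f|/\lambda)w\bigr)$ with $\lambda=2^{-j-2}$ and then subtracts $\tfrac34 w(Q)\|f\|_{A(w),Q}$, whereas you use the attained-norm identities $\int_QA(2^j|f|)w\le w(Q)\le\int_QA(2^{j+1}|f|)w$ directly; your path is a bit cleaner and lands on the stated constant $4c_A\,A(2^{j+1})/2^{j+1}$ without the extra bookkeeping.
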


\begin{lemma}\cite{CRR20}\label{CRRlema3} 
Let $w\in A_{\infty}$ and $\mathcal{S}$ be a $\eta$-sparse family of cubes. Then 
$$\sum_{Q\in \mathcal{S}} w(Q) \leq c_n [w]_{A_{\infty}}w\left(\bigcup_{Q\in \mathcal{S}} Q\right).$$
\end{lemma}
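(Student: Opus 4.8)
The plan is to combine sparseness with the definition $[w]_{A_\infty}=\sup_Q\frac1{w(Q)}\int_Q M(\chi_Q w)$, exploiting the elementary fact that a lower bound for the average of $w$ over a cube $Q$ is a lower bound for $M(\chi_Q w)$ on $Q$. Two preliminary reductions are convenient. First, by approximating $\mathcal{S}$ from inside by finite subfamilies and invoking monotone convergence on both sides at the end, we may assume $\mathcal{S}$ is finite. Second, splitting $\mathcal{S}$ according to its (pairwise disjoint) maximal cubes $\{P_i\}$ — available because $\mathcal{S}$ consists of dyadic cubes — and using $w\big(\bigcup_{Q\in\mathcal{S}}Q\big)=\sum_i w(P_i)$, it suffices to treat the case in which $\mathcal{S}$ has a largest element $R$, so that $\bigcup_{Q\in\mathcal{S}}Q=R$.

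In that case, for each $Q\in\mathcal{S}$ fix the pairwise disjoint measurable sets $E_Q\subseteq Q$ with $|E_Q|\geq\eta|Q|$ furnished by sparseness. Since $M(\chi_Q w)(x)\geq w(Q)/|Q|$ for every $x\in Q$, and $E_Q\subseteq Q$ together with $\chi_Q w\leq\chi_R w$, one gets
$$w(Q)=|Q|\,\frac{w(Q)}{|Q|}\leq\frac1\eta\,|E_Q|\,\frac{w(Q)}{|Q|}\leq\frac1\eta\int_{E_Q}M(\chi_R w).$$
Summing over $Q\in\mathcal{S}$, using the disjointness of the $E_Q$'s and $\bigcup_{Q\in\mathcal{S}}E_Q\subseteq R$,
$$\sum_{Q\in\mathcal{S}}w(Q)\leq\frac1\eta\int_R M(\chi_R w)\leq\frac1\eta\,[w]_{A_\infty}\,w(R),$$
and undoing the reduction yields the stated inequality with $c_n=1/\eta$ (a constant, dimensional for the dyadic sparse families appearing in this paper).

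There is no essential difficulty here; the only points deserving care are the reduction to a single ``top cube'', which genuinely uses the nested dyadic structure of $\mathcal{S}$, and the check that the finite-exhaustion argument is consistent with the monotone behaviour of both sides of the inequality (each side increases as the subfamily grows, so monotone convergence applies termwise on the left and to $w$ of an increasing union on the right).
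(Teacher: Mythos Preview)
Your proof is correct. The paper does not reproduce a proof of this lemma here---it is cited from \cite{CRR20}---so there is no in-paper argument to compare against directly. The closest thing is the proof of the $A_p$-analogue in spaces of homogeneous type (Lemma~\ref{CRRlema3Hom}), and that argument is genuinely different from yours: it uses H\"older's inequality with the $A_p$ condition to deduce $w(Q)\lesssim \eta^{-p}[w]_{A_p}\,w(E_Q)$ for each cube \emph{individually}, then sums over the disjoint sets $E_Q$. Your route through the Fujii--Wilson definition of $[w]_{A_\infty}$, via the pointwise bound $M(\chi_R w)\ge w(Q)/|Q|$ on $E_Q$ and a single application of the defining inequality on the top cube $R$, is the natural one for the constant actually appearing in the statement and delivers the cleaner dependence $\eta^{-1}[w]_{A_\infty}$. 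The reduction to a single top cube genuinely needs the dyadic (nested) structure, as you note; this is how sparse families arise throughout the paper, so the assumption is harmless. One minor remark: the constant you produce is $1/\eta$, not a purely dimensional constant as the notation $c_n$ might suggest, but in the applications $\eta$ is always a fixed structural parameter, so the paper's notation is simply loose on this point.
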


\begin{lemma}\cite{CRR20}\label{CRRlema4}
Let $A$ be a Young function such that $A(st)\leq c_A A(s)A(t)$. Let $\mathcal{D}_j$, $j=1,\dots,k$ be dyadic grids and let $w$ a weight. Then 
$$w\left(\left\{ x\in \mathbb{R}^n : M_{A(w)}^{\mathcal{F}} f(x) >t\right\}\right) \leq c_A c_n \int_{\mathbb{R}^n}A\left(\frac{|f(x)|}{t}\right)w(x)dx$$
where $\displaystyle \mathcal{F}=\bigcup_{j=1}^{k}\mathcal{D}_j$. 
\end{lemma}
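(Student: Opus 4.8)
The plan is to carry out the classical Calder\'on--Zygmund stopping cube argument after two elementary reductions. Since the weighted Luxemburg average is positively homogeneous, $\|cf\|_{A(w),Q}=c\,\|f\|_{A(w),Q}$ for $c>0$, we have $M_{A(w)}^{\mathcal{F}}(f/t)=M_{A(w)}^{\mathcal{F}}(f)/t$, so, writing $g=f/t$, it suffices to prove $w(\{M_{A(w)}^{\mathcal{F}}g>1\})\leq c_A c_n\int_{\mathbb{R}^n}A(|g|)\,w$. Next, $\{M_{A(w)}^{\mathcal{F}}g>1\}\subseteq\bigcup_{j=1}^{k}\{M_{A(w)}^{\mathcal{D}_j}g>1\}$, so by subadditivity of $w$ it is enough to bound $w(\{M_{A(w)}^{\mathcal{D}}g>1\})$ for a single dyadic grid $\mathcal{D}$, at the cost of a factor $k$ (which is dimensional in the applications, hence absorbed into $c_n$).

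Fix such a grid $\mathcal{D}$ and assume $\int_{\mathbb{R}^n}A(|g|)\,w<\infty$, since otherwise there is nothing to prove. If $M_{A(w)}^{\mathcal{D}}g(x)>1$, then some term of the supremum defining $M_{A(w)}^{\mathcal{D}}g(x)$ exceeds $1$, i.e.\ there exists $Q\in\mathcal{D}$ with $x\in Q$ and $\|g\|_{A(w),Q}>1$; by the very definition of the Luxemburg average this forces $\frac{1}{w(Q)}\int_Q A(|g|)\,w>1$, that is, $w(Q)<\int_Q A(|g|)\,w$. The crucial point is that the right-hand side is bounded above by $\int_{\mathbb{R}^n}A(|g|)\,w<\infty$, so the $w$-measures of all such ``stopping cubes'' are uniformly bounded; hence every point of $\{M_{A(w)}^{\mathcal{D}}g>1\}$ belongs to a maximal stopping cube, and the family $\{Q_i\}$ of maximal stopping cubes is pairwise disjoint (two dyadic cubes are nested or disjoint) and satisfies $\{M_{A(w)}^{\mathcal{D}}g>1\}\subseteq\bigcup_i Q_i$. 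Therefore
\[ w(\{M_{A(w)}^{\mathcal{D}}g>1\})\leq\sum_i w(Q_i)\leq\sum_i\int_{Q_i}A(|g|)\,w\leq\int_{\mathbb{R}^n}A(|g|)\,w, \]
the last inequality using the disjointness of the $Q_i$. Summing over the $k$ grids and undoing the substitution $g=f/t$ yields the claimed bound.

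The only step requiring care — and, I expect, the sole genuine obstacle — is the existence of maximal stopping cubes, which could fail a priori if some point lay in arbitrarily large cubes of the stopping family. If $w(\mathbb{R}^n)=\infty$ this cannot occur, since then $Q\mapsto w(Q)$ is unbounded along the dyadic cubes through any fixed point while the stopping cubes have uniformly bounded $w$-measure; in particular the difficulty is vacuous in the applications of this paper, where the weight playing the role of $w$ belongs to $A_{\infty}$. In general I would first prove the estimate for $g$ bounded with bounded support — a dense class by the Lebesgue differentiation theorem recalled in Section~\ref{sec:prelim} — by truncating the supremum in $M_{A(w)}^{\mathcal{D}}$ to cubes of side length at most $2^{N}$ (for which maximal stopping cubes trivially exist), obtaining a bound uniform in $N$, and then letting $N\to\infty$ by monotone convergence; a standard Fatou and density argument then removes the restriction on $g$, the submultiplicativity $A(st)\leq c_A A(s)A(t)$ being used to control the cutoff errors and accounting for the constant $c_A$. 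Alternatively, one may simply invoke the classical weighted weak type estimate for the dyadic Orlicz maximal operator with respect to the measure $w\,dx$, of which the present lemma is an immediate consequence once the two reductions above are in place.
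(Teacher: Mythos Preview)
The paper does not give its own proof of this lemma; it is merely quoted from \cite{CRR20} (see the line ``Now, we recall the followings lemmas proved in \cite{CRR20}''). Your argument is the standard one and is correct: the key observation is that $\|g\|_{A(w),Q}>1$ forces, by the very definition of the Luxemburg norm, $\frac{1}{w(Q)}\int_Q A(|g|)\,w>1$, i.e.\ $w(Q)<\int_Q A(|g|)\,w$; selecting maximal dyadic cubes with this property and using their pairwise disjointness then gives the bound for a single grid, and summing over the $k$ grids finishes the proof. Your discussion of the existence of maximal cubes is appropriate and the truncation workaround is fine. One minor remark: your core argument actually yields the constant $k$ (absorbed into $c_n$) and does not use the submultiplicativity of $A$ at all; the hypothesis $A(st)\le c_A A(s)A(t)$ and the constant $c_A$ are not genuinely needed for this weak-type bound, so you need not worry about justifying where $c_A$ enters.
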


We end this lemmatta section with the following key Lemma.
\begin{lemma}\label{cuentaSjk}
Let $1< p < \infty$, $\xi\geq 1$ and $\rho \geq 0$. Let $u\in A_1$, $v\in A_p(u)$ and $A(t)=t\log(e+t)^{\rho}$ and let $\mathcal{S}$ be a sparse family. Then, if $f\in L_c^{\infty}$ and  $g=\chi_G$ where  $G\subset \{x: M_{A(uv)} f (x)\leq \frac1{2}\}$ is a set of finite measure, we have that for every $\tau_{u,v}, \gamma\geq 1$ 
\begin{align*}
\tau_{u,v}&\sum_{Q\in \mathcal{S}} \|f\|_{A(uv),Q}\|g\|_{L^{\xi},Q}  uv(Q)
 \\&
\leq  c_{n,p} \tau_{u,v}[uv]_{A_{\infty}}\log(e+\tau_{u,v}[uv]_{A_{\infty}}[v]_{A_p(u)})^{1+\rho} \int_{\mathbb{R}^n} A(|f|)uv + \frac1{2 \gamma} uv(G).
\end{align*}
\end{lemma}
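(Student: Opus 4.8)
The goal is to control a sparse sum weighted against $uv$ by an integral of $A(|f|)uv$ plus a fraction of $uv(G)$. The natural strategy is a Calder\'on--Zygmund type stopping-time decomposition of the sparse family according to the size of the Orlicz average $\|f\|_{A(uv),Q}$, combined with the two extra gains available: first, the smallness of $\|f\|_{A(uv),Q}$ on cubes meeting $G$ (since $G\subset\{M_{A(uv)}f\le\tfrac12\}$), which forces the relevant levels to satisfy $\|f\|_{A(uv),Q}\le\tfrac12$; and second, the pointwise smallness of $g=\chi_G$ in the $L^\xi$ average on large cubes, quantified via the $A_p(uv)$ (equivalently $A_\infty(uv)$) information on $v$. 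The plan is to split, for each cube $Q\in\mathcal S$, into dyadic levels $2^{-j-1}\le\|f\|_{A(uv),Q}\le 2^{-j}$ (here $j\ge 1$ because of the stopping condition coming from $G$) and, independently, dyadic levels controlling $\langle\chi_G\rangle_{Q,\xi}^{uv}\sim 2^{-k/\xi}$; then estimate the contribution of each $(j,k)$ block by two competing bounds and sum using Lemma \ref{CRRlema1}.

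\textbf{Key steps, in order.} First I would reduce $\|g\|_{L^\xi,Q}$ to a weighted average with respect to $uv$: since $uv\in A_p$ (by the last lemma of Subsection 2.2) one can pass from $\langle\chi_G\rangle_{Q,\xi}$ to $\langle\chi_G\rangle_{Q,\xi}^{uv}$ at the cost of a power of $[uv]_{A_p}$, or alternatively work directly with the unweighted average and reintroduce the weight through $\|f\|_{\Phi(uv),Q}\le\|f\|_{[v]_{A_p(u)}\Phi^p(uv),Q}$ from the $A_p(u)$ lemma; this is where the factor $[v]_{A_p(u)}$ enters the logarithm. Second, apply Lemma \ref{CRRlema2} to the family of cubes at level $j$: for each such $Q$ there is $\tilde E_Q\subset Q$ with $\sum\chi_{\tilde E_Q}\lesssim [uv]_{A_\infty}$ and $uv(Q)\|f\|_{A(uv),Q}\lesssim \frac{A(2^{j+1})}{2^{j+1}}\int_{\tilde E_Q}A(|f|)uv$; since $A(t)=t\log(e+t)^\rho$, $\frac{A(2^{j+1})}{2^{j+1}}\sim (j+1)^\rho$, which produces the $\rho_1=\rho$ exponent in Lemma \ref{CRRlema1}. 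Third, for the competing bound on each $(j,k)$ block, use the $A_p$ or $A_\infty$ property of $uv$ to bound $\sum_{Q\in\mathcal S_{j,k}} uv(Q)$ by $C[uv]_{A_\infty}\,uv(\bigcup Q)\le C[uv]_{A_\infty}\,uv(G_k)$ where $G_k$ is a level set, which via the reverse-H\"older/$A_\infty$ self-improvement decays geometrically in $k$, giving the $2^{-j}2^{-k}$-type factor with the extra $2^{\delta k}k^{\rho_2}$ slack absorbed by the sparse covering and the weight comparison. Fourth, feed $\alpha_{j,k}=\min\{\gamma_1 2^{-k}j^\rho,\ \beta\gamma_2 2^{-j}2^{-k}2^{\delta k}k^{\rho_2}\}$ into Lemma \ref{CRRlema1} with $\gamma_1\sim\tau_{u,v}[uv]_{A_\infty}$, $\beta\sim uv(G)$, and $\gamma_2$ a polynomial in $\tau_{u,v}$, $[uv]_{A_\infty}$, $[v]_{A_p(u)}$, yielding exactly $c_{n,p}\tau_{u,v}[uv]_{A_\infty}\log(e+\tau_{u,v}[uv]_{A_\infty}[v]_{A_p(u)})^{1+\rho}\int A(|f|)uv+\frac1{2\gamma}uv(G)$ after choosing $\gamma$ to match the $\frac1{2\gamma}$ in the conclusion.

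\textbf{Main obstacle.} The delicate point is the bookkeeping of constants: one must track how $[uv]_{A_\infty}$, $[uv]_{A_p}$ and $[v]_{A_p(u)}$ enter — which appear multiplicatively inside the logarithm and which as a single outer power — so that the final bound has precisely the claimed form with the logarithm raised to $1+\rho$ and only one clean power $[uv]_{A_\infty}$ outside. Getting the right power of the sparseness constant into the geometric decay in $k$ (via Lemma \ref{CRRlema3} or Lemma \ref{CRRlema3Hom}) and ensuring the $2^{\delta k}$ loss from the $A_\infty$ self-improvement is genuinely dominated, so that the two bounds in the $\min$ actually cross at the right scale, is the part requiring care; everything else is a routine but lengthy assembly of the cited lemmata.
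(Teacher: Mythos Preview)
Your overall architecture is exactly the paper's: split $\mathcal S=\bigcup_{j,k\ge 0}\mathcal S_{j,k}$ according to dyadic levels of $\|f\|_{A(uv),Q}$ and of the $g$-average, prove two competing bounds on $s_{j,k}=\sum_{Q\in\mathcal S_{j,k}}\|f\|_{A(uv),Q}\langle g\rangle_{Q}\,uv(Q)$, and feed the minimum into Lemma~\ref{CRRlema1}. Your top bound via Lemma~\ref{CRRlema2} (producing the factor $j^\rho$ and the single $[uv]_{A_\infty}$) is correct and is precisely what the paper does.

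Where your sketch drifts is the lower bound. First, a point of interpretation: the $g$-average in the statement is the $u$-weighted one, $\langle g\rangle^{u}_{Q,\xi}$ (this is how the lemma is invoked in the proofs of Theorems~\ref{MixtaT} and~\ref{multiSimb}); your Step~1, converting to a $uv$-average via $uv\in A_p$, is neither needed nor what the paper does, and it would put $[uv]_{A_p}$ rather than $[v]_{A_p(u)}$ inside the logarithm. Second, no reverse H\"older or $A_\infty$ self-improvement enters, and the $uv$-measure of the relevant level set does not ``decay geometrically in $k$''; it \emph{grows}, and that growth is exactly the $2^{\delta k}$ (with $\delta=\xi p$) in Lemma~\ref{CRRlema1}. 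The mechanism is simpler than you suggest: after Lemma~\ref{CRRlema3} gives $\sum_{Q\in\mathcal S_{j,k}}uv(Q)\lesssim [uv]_{A_\infty}\,uv(\bigcup Q)$, one observes $\bigcup_{Q\in\mathcal S_{j,k}}Q\subset\{M_u(g)^{1/\xi}>2^{-k-1}\}$, then uses the $A_p(u)$ condition in the form
\[
\frac{1}{u(Q)}\int_Q g\,u\ \le\ \Bigl(\frac{[v]_{A_p(u)}}{uv(Q)}\int_Q g\,uv\Bigr)^{1/p},
\]
so that $\{M_u(g)>t\}\subset\{M_{uv}(g)>t^{p}/[v]_{A_p(u)}\}$, and finishes with the weak $(1,1)$ bound for $M_{uv}$ with respect to the measure $uv$. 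This yields $s_{j,k}\le c_{n,p}[uv]_{A_\infty}[v]_{A_p(u)}\,2^{-j}2^{k(\xi p-1)}uv(G)$ and, after Lemma~\ref{CRRlema1} with $\gamma_1=c_n\tau_{u,v}[uv]_{A_\infty}\int A(|f|)uv$, $\gamma_2=c_{n,p}\tau_{u,v}[uv]_{A_\infty}[v]_{A_p(u)}$, $\beta=uv(G)$, $\delta=\xi p$, $\rho_1=\rho$, $\rho_2=0$, the claimed conclusion. Once you replace your Step~1 and the reverse-H\"older appeal by this direct use of the $A_p(u)$ definition, your outline coincides with the paper's proof.
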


\begin{proof}

Taking into account that $G$, is a subset of the set where  $M_{A(uv)}(f)\leq \frac1{2}$ we can split $\mathcal{S}$ as follows $Q\in \mathcal{S}_{j,k}$, $j,k\geq 0$ if  
\begin{align*}
2^{-j-1}<&\|f\|_{A(uv),Q} \leq 2^{-j}
\\
2^{-k-1}<&\|g\|_{L^{\xi}(u),Q}\leq 2^{-k}
\end{align*}

Let us define 
$$ s_{j,k} = \sum_{Q\in \mathcal{S}_{j,k}} \|f\|_{A(uv),Q}  \|g\|_{L^{\xi}(u),Q} uv(Q) $$

We claim that
\begin{align*}
s_{j,k} \leq \begin{cases}
c_n 2^{-k}[uv]_{A_{\infty}}j^{\rho}\int_{\mathbb{R}} A(|f|)uv
\\
c_{n,p} [uv]_{A_{\infty}}[v]_{A_p(u)} 2^{-j}2^{k(\xi p-1)} uv(G)
\end{cases}
\end{align*}

For the top estimate we use Lemma \ref{CRRlema2} with $w=uv$ and $A(t)=t(1+\log^+t)^{\rho}$, and we have
$$uv(Q)\|f\|_{A(uv),Q}\leq c j^{\rho}\int_{\tilde{E}_Q} A(|f|)uv$$
with
$$\sum_{Q\in \mathcal{S}_{j,k}} \chi_{\tilde{E}_Q}(x)\leq \lceil c_n [uv]_{A_{\infty}} \rceil.$$

Then, 
\begin{align*}
s_{j,k}&\leq c 2^{-k}j^{\rho}\sum_{Q\in \mathcal{S}_{j,k}} \int_{\tilde{E}_Q} A(|f|)uv
\leq c_n [uv]_{A_{\infty}}2^{-k}j^{\rho}\int_{\mathbb{R}^n}A(|f|)uv.
\end{align*}

For the lower estimate, using Lemma \ref{CRRlema3}
\begin{align*}
s_{j,k}
&\leq 2^{-j}2^{-k} \sum_{Q\in \mathcal{S}_{j,k}} uv(Q)  
\\&\leq c_n [uv]_{A_{\infty}} 2^{-j}2^{-k} uv\left(\cup_{Q\in \mathcal{S}_{j,k}}Q\right)
\\&\leq c_n [uv]_{A_{\infty}} 2^{-j}2^{-k} uv\left(\left\{x\in \mathbb{R}^n : M_u(g)^{\frac1{\xi}}>2^{-k-1}\right\}\right)
\end{align*}
Since $v\in A_p(u)$ we have 
$$\frac1{u(Q)}\int_Q gu \leq \left( \frac{[v]_{A_p(u)}}{uv(Q)}\int_Q guv\right)^{\frac1{p}}$$

Then, 

\begin{align*}
s_{j,k}
&\leq c_n [uv]_{A_{\infty}} 2^{-j}2^{-k} uv\left(\left\{x\in \mathbb{R}^n : \left([v]_{A_p(u)}M_{uv}(g)\right)^{\frac1{\xi p}}>2^{-k-1}\right\}\right)
\\&\leq c_n [uv]_{A_{\infty}} 2^{-j}2^{-k} uv\left(\left\{x\in \mathbb{R}^n : M_{uv}(g)>2^{-\xi p(k+1)}[v]_{A_p(u)}^{-1}\right\}\right)
\\&\leq c_{n,p} [uv]_{A_{\infty}}[v]_{A_p(u)} 2^{-j}2^{k(\xi p-1)}uv(G)  
\end{align*}

Combining the estimates above 
\begin{align*}
&\tau_{u,v}\sum_{Q\in \mathcal{S}} \|f\|_{A(uv),Q}\|g\|_{L^{\xi},Q}  uv(Q)
=
\tau_{u,v} \sum_{k=0}^{\infty}\sum_{j=0}^{\infty} s_{j,k}
\\&\leq \sum_{j,k=0}^{\infty}\min\big\{ c_{n}  \tau_{u,v}[uv]_{A_{\infty}} 2^{-k}j^{\rho}\int_{\mathbb{R}^n} A(|f|)uv,
c_{n,p} \tau_{u,v}[uv]_{A_{\infty}} [v]_{A_p(u)} 2^{-j}2^{k(\xi p-1)} uv(G)\big\}
\end{align*}
 We end the proof using Lemma \ref{CRRlema1} with $\gamma_1 = c_n\tau_{u,v}[uv]_{A_{\infty}}\int_{\mathbb{R}^n} A(|f|)uv$, 
 \\ $\gamma_2=c_{n,p}\tau_{u,v}[uv]_{A_{\infty}}[v]_{A_p(u)}$, $\beta=uv(G)$, $\delta=\xi p$, $\gamma=3^n$, $\rho_1=\rho$ and $\rho_2=0$.

\end{proof}

\subsubsection{Proof of Theorem \ref{MixtaT}}
Let $G=\{x : \frac{T(fv)(x)}{v(x)}> 1\}\setminus \{x: M_{uv} f (x)>\frac1{2}\}$ and assume that $\|f\|_{L^1(uv)}=1$. If we denote $g=\chi_G$, for the sparse domination we have
\begin{align*}
uv(G)&\leq\left|\int T(fv)gu dx\right|
\lesssim \sum_{j=1}^{3^n}\sum_{Q\in \mathcal{S}_j} \left(\int_Q fv\right) \|gu\|_{A,Q}  
\end{align*}
Since $u\in A_1\cap RH_r$ then $u^r\in A_1 \subset A_{\infty}$, then we can take $s=4(1+\frac1{2\tau_n[u^r]_{A_{\infty}}})r$.
By Lemma \ref{promA}, 
$$\|g u\|_{A,Q}\leq c_n \kappa_{u} [u]_{RH_q}^{1+\frac{q}{4r}} \langle u\rangle_{Q,1} \langle g \rangle_{Q,s}^{u}.$$

Since $u\in A_1$, 
\begin{align*}
uv(G)
&\lesssim \sum_{j=1}^{3^n}\sum_{Q\in \mathcal{S}_j} \left(\int_Q fv\right) \|gu\|_{A,Q}  
\\&  \leq c_n \kappa_{u} [u]_{RH_q}^{1+\frac{q}{4r}} [u]_{A_1}\sum_{j=1}^{3^n}\sum_{Q\in \mathcal{S}_j} \langle f \rangle_{Q,1}^{uv}  uv(Q)  \langle g \rangle_{Q,s}^{u} 
\end{align*}

 Now, we apply Lemma \ref{cuentaSjk}, with $\xi=s$, $\rho=0$, $\tau_{u,v}=c_n \kappa_{u} [u]_{RH_q}^{1+\frac{q}{4r}} [u]_{A_1}$ and $\gamma=3^n$, then 
\begin{align*}
uv(G)
&  \leq c_n \kappa_{u} [u]_{RH_q}^{1+\frac{q}{4r}} [u]_{A_1}\sum_{j=1}^{3^n}\sum_{Q\in \mathcal{S}_j} \langle f \rangle_{Q,1}^{uv}  uv(Q)  \langle g \rangle_{Q,s}^{u} 
\\& \leq c_{n,p} \kappa_{u} [u]_{RH_q}^{1+\frac{q}{4r}} [u]_{A_1}[uv]_{A_{\infty}}\log(e+\kappa_{u} [u]_{RH_q}^{1+\frac{q}{4r}} [u]_{A_1}[uv]_{A_{\infty}}[v]_{A_p(u)})+ \frac1{2} uv(G)
\end{align*}

\subsubsection{Proof of Theorems \ref{multiSimb} and \ref{conmT} }
We provide the proof of Theorem \ref{multiSimb} first and at the end we show how to adjust the argument to settle Theorem \ref{conmT}.
Let $G=\{x : \frac{T_{\bf b}(fv)(x)}{v(x)}> 1\}\setminus \{x: M_{\varphi_{\frac1{r}}(uv)} f (x)>\frac1{2}\}$, with $\varphi_{\frac1{r}}(t)=t\log(e+t)^{\frac1{r}}$ and $g=\chi_G$, for the sparse domination we have
\begin{align*}
uv(G)&\leq\left|\int T_{\bf b}(fv)gu \right|
\lesssim \sum_{j=1}^{3^n}\sum_{h=0}^m \sum_{\sigma \in C_h(b)} \sum_{Q\in \mathcal{S}_j}
  \left(\int_Q fv|b-b_Q|_{\sigma '}\right)  \|gu|b-b_Q|_{\sigma}\|_{A,Q}  
\end{align*}

Let $\xi=4(1+\frac1{2\tau_n[u^s]_{A_\infty}})s$.
By  $u\in A_1$,   Theorem \ref{promA} and  H\"older inequality we obtain
\begin{align*}
&  \sum_{Q\in \mathcal{S}}
  \left(\int_Q fv|b-b_Q|_{\sigma '}\right)  \|gu|b-b_Q|_{\sigma}\|_{A,Q}  
 \\&  \leq   \sum_{Q\in \mathcal{S}}
  \left(\int_Q fv|b-b_Q|_{\sigma '}\right)  \|gu\|_{B,Q} \prod_{i\in \sigma}\|b-b_Q\|_{\exp L^{r_i},Q}  
\\&\lesssim \kappa_u[u]_{RH_{q}}^{1+\frac{q}{4s}}\sum_{Q\in \mathcal{S}} \left(\int_Q fv|b-b_Q|_{\sigma '}\right)  \|g\|_{L^{\xi}(u),Q}\frac{u(Q)}{|Q|}\prod_{i\in \sigma}\|b-b_Q\|_{\exp L^{r_i},Q}  
\\&\leq \kappa_u[u]_{RH_{q}}^{1+\frac{q}{4s}}[u]_{A_1}\sum_{Q\in \mathcal{S}} \|f|b-b_Q|_{\sigma '}\|_{uv,Q}  \|g\|_{L^{\xi}(u),Q}\prod_{i\in \sigma}\|b-b_Q\|_{\exp L^{r_i},Q}uv(Q)
\\&\lesssim \kappa_u[u]_{RH_{q}}^{1+\frac{q}{4s}}[u]_{A_1}[uv]_{A_{\infty}}^{\frac{m-h}{r}}\sum_{Q\in \mathcal{S}} \prod_{i\in 1}^m \|b-b_Q\|_{\exp L^{r_i},Q}   \|f\|_{L\log L^{\frac1{r}}(uv),Q}  \|g\|_{L^{\xi}(u),Q} uv(Q)
\\&= \kappa_u[u]_{RH_{q}}^{1+\frac{q}{4s}}[u]_{A_1}[uv]_{A_{\infty}}^{\frac{m-h}{r}} \|{\bf b}\| \sum_{Q\in \mathcal{S}} \|f\|_{L\log L^{\frac1{r}}(uv),Q}  \|g\|_{L^{\xi}(u),Q} uv(Q)
\end{align*}

Now, we apply Lemma \ref{cuentaSjk} with $\rho=\frac1{r}$, $A(t)=\varphi_{\frac1{r}}(t)$, $\tau_{u,v}= \kappa_u[u]_{RH_{q}}^{1+\frac{q}{4s}}[u]_{A_1}[uv]_{A_{\infty}}^{\frac{m-h}{r}}$ and  $\gamma=3^n {{m}\choose{h}}$ (recall that ${m}\choose{h}$ is the cardinality of $C_h(b)$), then 
\begin{align*}
 \sum_{\sigma \in C_h(b)}\sum_{Q\in \mathcal{S}}&
  \left(\int_Q fv|b-b_Q|_{\sigma '}\right)  \|gu|b-b_Q|_{\sigma}\|_{A,Q}  
\\&  \leq c_{n,T} C_{u,v} \int_{\R} \varphi_{\frac{1}{r}}\left(\frac{|f|\| {\bf b}\|}{\lambda}\right) uv + \frac1{3^n2}uv(G)
\end{align*}
where
\begin{align*}
C_{uv} =  \kappa_u [u]_{RH_{q}}^{1+\frac{q}{4s}} [u]_{A_1} [uv]_{A_{\infty}}^{1+\frac{m-h}{r}}\log(e+[uv]_{A_{\infty}}^{1+\frac{m-h}{r}} \kappa_u[u]_{RH_{q}}^{1+\frac{q}{4s}}[u]_{A_1}[v]_{A_p(u)})^{1+\frac1{r}}\qed 
\end{align*}

To settle Theorem \ref{conmT}, due to the following Lemma, that follows from ideas in \cite{CUMT21}, it suffices to apply the argument above just to the cases in which $\sigma=\emptyset$ or in which $\sigma$ contains the $m$ ``copies'' of $b$.
\begin{lemma}\label{lemaBMO}Given $b\in L^m_\text{loc}$, a sparse family $\mathcal{S}$, a positive integer $m$ and $h\in\{0,\dots,m\}$ we have that
\begin{align*}
\mathcal{A}_{A,\mathcal{S}}^{m-h}(b,f)(x)
\leq  \sum_{Q\in \mathcal{S}}
  |b-b_Q|^m \|f\|_{A,Q} \chi_Q(x) +  \sum_{Q\in \mathcal{S}}  \|f|b-b_Q|^m\|_{A,Q} \chi_Q(x). 
\end{align*}
\end{lemma}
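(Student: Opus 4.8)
The statement to prove is a pointwise bound for the ``sparse operator with symbol'' $\mathcal{A}_{A,\mathcal{S}}^{m-h}(b,f)$, which (reading off the notation used in the body of the paper, cf. \eqref{bilinearsparseTb}) is the quantity $\sum_{Q\in\mathcal{S}}|b-b_Q|^{m-h}\,\big\||b-b_Q|^{h}f\big\|_{A,Q}\chi_Q(x)$, or more precisely the sum over all $\sigma\in C_{m-h}(b)$ of the corresponding products; since all the symbols coincide here, every such term equals $|b(x)-b_Q|^{m-h}\big\||b-b_Q|^{h}f\big\|_{A,Q}$. The plan is to prove the inequality \emph{termwise in $Q$}: it suffices to show, for each fixed cube $Q$ and each $x\in Q$,
\[
|b(x)-b_Q|^{m-h}\,\big\||b-b_Q|^{h}f\big\|_{A,Q}\;\le\;|b(x)-b_Q|^{m}\,\|f\|_{A,Q}\;+\;\big\||b-b_Q|^{m}f\big\|_{A,Q},
\]
and then sum over $Q\in\mathcal{S}$ and multiply by $\chi_Q(x)$.

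The key step is a splitting according to the size of $|b(x)-b_Q|$. Fix $Q$ and $x\in Q$. First I would use the elementary (sub)multiplicativity of the Luxemburg average, namely that for any Young function $A$, $\|gh\|_{A,Q}\le \|g\|_{\infty,Q}\,\|h\|_{A,Q}$ whenever $g$ is a function on $Q$ — but here the factor $|b-b_Q|^{h}$ is \emph{not} bounded, so instead I split on the set $\{y\in Q:|b(y)-b_Q|\le |b(x)-b_Q|\}$ and its complement. On the first set one has $|b(y)-b_Q|^{h}\le |b(x)-b_Q|^{h}$ pointwise, so by monotonicity of the norm $\big\||b-b_Q|^{h}f\chi_{\{|b-b_Q|\le|b(x)-b_Q|\}}\big\|_{A,Q}\le |b(x)-b_Q|^{h}\|f\|_{A,Q}$; multiplying by $|b(x)-b_Q|^{m-h}$ gives exactly the first term $|b(x)-b_Q|^{m}\|f\|_{A,Q}$. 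On the complementary set $\{y\in Q:|b(y)-b_Q|>|b(x)-b_Q|\}$ one has the reverse inequality $|b(x)-b_Q|^{m-h}\le |b(y)-b_Q|^{m-h}$ pointwise, hence $|b(x)-b_Q|^{m-h}|b(y)-b_Q|^{h}\le |b(y)-b_Q|^{m}$ there, which yields the second term $\big\||b-b_Q|^{m}f\big\|_{A,Q}$. Finally, subadditivity of $\|\cdot\|_{A,Q}$ in the decomposition $f=f\chi_{\{\le\}}+f\chi_{\{>\}}$ combines the two contributions, completing the termwise bound.

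The only mildly delicate point — which I expect to be the main obstacle — is bookkeeping rather than analysis: making sure that the definition of $\mathcal{A}_{A,\mathcal{S}}^{m-h}(b,f)$ in terms of the families $C_{m-h}(b)$ collapses correctly when all symbols coincide (so that the sum over $\sigma$ contributes only a combinatorial constant that is harmless, or is already normalized away in the definition), and that the Luxemburg average really is subadditive and monotone with respect to pointwise domination and to restriction to subsets of $Q$ — both of which are immediate from the defining infimum but should be stated. Once the reduction to a single cube is in place, the argument is the two-case pointwise split above and nothing more; no weights, no $A_\infty$, and no sparseness are used here, which is why the lemma is quoted as ``following from ideas in \cite{CUMT21}'' and applied merely to strip the general symbol sum down to the extreme cases $\sigma=\emptyset$ and $\sigma=\mathbf{b}$.
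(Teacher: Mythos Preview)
Your proposal is correct and is essentially the paper's own argument. The only cosmetic difference is that the paper avoids the explicit set decomposition by pulling the constant $|b(x)-b_Q|^{m-h}$ inside the Luxemburg norm and using the pointwise inequality
\[
|b(x)-b_Q|^{m-h}\,|b(y)-b_Q|^{h}\le\max\{|b(x)-b_Q|,|b(y)-b_Q|\}^{m}\le |b(x)-b_Q|^{m}+|b(y)-b_Q|^{m},
\]
which is exactly your two-case split written in one line; your concern about the $\sigma$-sum is unnecessary here since the operator $\mathcal{A}_{A,\mathcal{S}}^{m-h}(b,f)$ is already defined (for a single symbol $b$) without any sum over $C_{m-h}$.
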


\begin{proof}
\begin{align*}
\mathcal{A}_{A,\mathcal{S}}^{m-h}(b,f)(x)
&=\sum_{Q\in \mathcal{S}}   |b(x)-b_Q|^{m-h}  \|f|b-b_Q|^h\|_{A,Q}\chi_Q(x)
\\&=\sum_{Q\in \mathcal{S}}     \|f|b(x)-b_Q|^{m-h}|b-b_Q|^h\|_{A,Q}\chi_Q(x)
\\&\leq \sum_{Q\in \mathcal{S}}     \|f\max\{|b(x)-b_Q|,|b-b_Q|\}^m\|_{A,Q}\chi_Q(x)
\\&\leq  \sum_{Q\in \mathcal{S}}
  |b(x)-b_Q|^m \|f\|_{A,Q} \chi_Q(x) + \sum_{Q\in \mathcal{S}}  \|f|b-b_Q|^m\|_{A,Q} \chi_Q(x) 
  \end{align*}
\end{proof}

\subsection{Proofs of the results in spaces of homogeneous type}
\subsubsection{Lemmata}

Our first Lemma is the following.
\begin{lemma}
\label{RHsHom} Let $1\leq r<\infty$. If $u\in RH_{q}$ with $q=2r-1$
and $u^{r}\in A_{\infty}$, then for $s=1+\frac{1}{2\tau_{n}[u^{r}]_{A_{\infty}}}$
and any measurable subset $E\subset Q$, 
\[
\frac{u^{sr}(E)}{u^{sr}(c_{d}Q)}\lesssim[u]_{RH_{q}}^{q/4}\left(\frac{u(E)}{u(c_{d}Q)}\right)^{\frac{1}{4}}
\]
\end{lemma}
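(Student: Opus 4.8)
The plan is to follow the scheme of the euclidean Lemma~\ref{RHsrho}, the only genuine change being that the reverse H\"older inequality at our disposal is the \emph{weak} one for balls, so that every average must be taken over a suitably dilated ball. Concretely, I would first establish the analogue of \eqref{eq:RHq}, namely
\[
u^{r}(E)\lesssim[u]_{RH_{q}}^{q/2}\left(\frac{u(E)}{u(c_{d}Q)}\right)^{1/2}u^{r}(c_{d}Q),
\]
and then the analogue of \eqref{eq:RHs}, namely
\[
\frac{u^{sr}(E)}{u^{sr}(c_{d}Q)}\lesssim\left(\frac{u^{r}(E)}{u^{r}(c_{d}Q)}\right)^{1/2}.
\]
Combining the two (rewriting the first as a bound for the ratio $u^{r}(E)/u^{r}(c_{d}Q)$ and plugging it into the second) immediately produces the claimed estimate, with the exponent $1/4$ and the factor $[u]_{RH_{q}}^{q/4}$.

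For the first estimate I would write $u^{r}=u^{1/2}u^{q/2}$, which is legitimate since $1+q=2r$, so that by H\"older's inequality and the inclusions $E\subset Q\subset 1Q$,
\[
u^{r}(E)\le u(E)^{1/2}\Big(\int_{E}u^{q}\Big)^{1/2}\le u(E)^{1/2}\,u^{q}(1Q)^{1/2}.
\]
Now I apply the weak reverse H\"older inequality to the ball $1Q$: since the $c_{d}$-dilate of $1Q=B(z,C_{0}\delta^{k})$ is precisely the ball $c_{d}Q=B(z,c_{d}C_{0}\delta^{k})$, this gives $u^{q}(1Q)\le\mu(1Q)\big([u]_{RH_{q}}u(c_{d}Q)/\mu(c_{d}Q)\big)^{q}$. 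Bounding $\mu(1Q)\le\mu(c_{d}Q)$, recalling $q-1=2(r-1)$, and invoking the elementary inequality $(u(c_{d}Q)/\mu(c_{d}Q))^{r-1}\le u^{r}(c_{d}Q)/u(c_{d}Q)$ (which is just H\"older's inequality $u(c_{d}Q)^{r}\le\mu(c_{d}Q)^{r-1}u^{r}(c_{d}Q)$ on the ball $c_{d}Q$), I obtain $u^{q}(1Q)^{1/2}\lesssim[u]_{RH_{q}}^{q/2}u^{r}(c_{d}Q)/u(c_{d}Q)^{1/2}$, which together with the previous display gives the first estimate.

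The second estimate is the same argument run with $u^{r}$ in place of $u$, $s$ in place of $r$ and $2s-1$ in place of $q$: since $u^{r}\in A_{\infty}$ and $2s-1=1+\frac{1}{\tau_{n}[u^{r}]_{A_{\infty}}}$ lies in the admissible range, the sharp reverse H\"older inequality of \cite{HPR12} plays the role formerly played by the $RH_{q}$ hypothesis, so the relevant ``$RH$-constant'' is merely a constant depending on the space, and the auxiliary exponents that appear (such as $s-\tfrac12\le1$) are harmless. Consequently the only point that really needs care is the bookkeeping of the dilated balls $1Q\subset c_{d}Q$ and the monotonicity and doubling they entail; this is also the source of the slightly worse constants noted in the Remark, but no idea beyond the euclidean proof is required.
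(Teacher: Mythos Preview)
Your proposal is correct and follows essentially the same route as the paper: first prove $u^{r}(E)\lesssim[u]_{RH_{q}}^{q/2}(u(E)/u(c_{d}Q))^{1/2}u^{r}(c_{d}Q)$ via the splitting $u^{r}=u^{1/2}u^{q/2}$, Cauchy--Schwarz, and the weak reverse H\"older inequality on $1Q$ (landing on $c_{d}Q$), then repeat the argument with $u^{r}$, $s$, and $2s-1$ in place of $u$, $r$, and $q$, using the sharp reverse H\"older inequality of \cite{HPR12} for $u^{r}\in A_{\infty}$; the paper merely writes the second step out explicitly rather than by analogy.
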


\begin{proof}
First, let us see 
\begin{align}
u^{r}(E)\leq[u]_{RH_{q}}^{q/2}\left(\frac{u(E)}{u(1Q)}\right)^{1/2}u^{r}(c_{d}Q)\label{eq:RHqHom}
\end{align}

Indeed, since $u\in RH_{q}$ we obtain 
\begin{align*}
\left(\frac{u^{q}(1Q)}{u(c_{d}Q)}\right)^{1/2} & =\left(\frac{\frac{u^{q}(1Q)}{\mu(c_{d}Q)}}{\frac{u(c_{d}Q)}{\mu(c_{d}Q)}}\right)^{1/2}\leq[u]_{RH_{q}}^{q/2}\left(\frac{\left(\frac{u(c_{d}Q)}{\mu(c_{d}Q)}\right)^{q}}{\frac{u(c_{d}Q)}{\mu(c_{d}Q)}}\right)^{1/2}=[u]_{RH_{q}}^{q/2}\left(\frac{u(c_{d}Q)}{\mu(c_{d}Q)}\right)^{r-1}\\
 & =[u]_{RH_{q}}^{q/2}\frac{\mu(c_{d}Q)}{u(c_{d}Q)}\left(\frac{u(c_{d}Q)}{\mu(c_{d}Q)}\right)^{r}\leq[u]_{RH_{q}}^{q/2}\frac{\mu(c_{d}Q)}{u(c_{d}Q)}\frac{u^{r}(c_{d}Q)}{\mu(c_{d}Q)}\\
 & =[u]_{RH_{q}}^{q/2}\frac{u^{r}(c_{d}Q)}{u(c_{d}Q)}
\end{align*}

Taking that into account, 
\begin{align*}
u^{r}(E) & \leq u(E)^{1/2}u^{q}(1Q)^{1/2}=u(E)^{1/2}u(c_{d}Q)^{\frac{1}{2}}\left(\frac{u^{q}(1Q)}{u(c_{d}Q)}\right)^{1/2}\\
 & \leq u(E)^{1/2}[u]_{RH_{q}}^{q/2}\frac{u^{r}(c_{d}Q)}{u(c_{d}Q)^{1/2}}=[u]_{RH_{q}}^{q/2}\left(\frac{u(E)}{u(c_{d}Q)}\right)^{1/2}u^{r}(c_{d}Q).
\end{align*}

In the other hand, since $s=1+\frac{1}{2\tau_{n}[u^{r}]_{A_{\infty}}}$
we have that 
\begin{align*}
\left(\frac{u^{r(2s-1)}(1Q)}{u^{r}(c_{d}Q)}\right)^{\frac{1}{2}} & =\left(\frac{u^{r(2s-1)}(1Q)}{\mu(1Q)}\frac{\mu(1Q)}{u^{r}(c_{d}Q)}\right)^{\frac{1}{2}}\lesssim\left(\frac{u^{r}(c_{d}Q)}{\mu(c_{d}Q)}\right)^{\frac{2s-1}{2}}\left(\frac{\mu(1Q)}{u^{r}(c_{d}Q)}\right)^{\frac{1}{2}}\\
 & \leq\left(\frac{u^{r}(c_{d}Q)}{\mu(c_{d}Q)}\right)^{s}\left(\frac{\mu(c_{d}Q)}{u^{r}(c_{d}Q)}\right)\leq\left(\frac{u^{rs}(c_{d}Q)}{\mu(c_{d}Q)}\right)\left(\frac{\mu(c_{d}Q)}{u^{r}(c_{d}Q)}\right)\\
 & =\frac{u^{rs}(c_{d}Q)}{u^{r}(c_{d}Q)}
\end{align*}
Summarizing 
\[
\left(\frac{u^{r(2s-1)}(1Q)}{u^{r}(c_{d}Q)}\right)^{\frac{1}{2}}\lesssim\frac{u^{rs}(c_{d}Q)}{u^{r}(c_{d}Q)}.
\]
Observe that relying upon this estimate 
\begin{align*}
\frac{u^{sr}(E)}{u^{sr}(c_{d}Q)} & =\frac{u^{sr-\frac{r}{2}+\frac{r}{2}}(E)}{u^{sr}(c_{d}Q)}\leq\frac{u^{2r(s-1)}(E)^{\frac{1}{2}}u^{r}(E)^{\frac{1}{2}}}{u^{sr}(c_{d}Q)}\\
 & \leq\left(\frac{u^{2r(s-1)}(1Q)}{u^{r}(c_{d}Q)}\right)^{\frac{1}{2}}\frac{u^{r}(c_{d}Q)^{\frac{1}{2}}u^{r}(E)^{\frac{1}{2}}}{u^{sr}(c_{d}Q)}\\
 & \lesssim\frac{u^{rs}(c_{d}Q)}{u^{r}(c_{d}Q)}\frac{u^{r}(c_{d}Q)^{\frac{1}{2}}u^{r}(E)^{\frac{1}{2}}}{u^{sr}(c_{d}Q)}=\left(\frac{u^{r}(E)}{u^{r}(c_{d}Q)}\right)^{1/2}
\end{align*}
Hence, 
\begin{equation}
\frac{u^{sr}(E)}{u^{sr}(c_{d}Q)}\lesssim\left(\frac{u^{r}(E)}{u^{r}(c_{d}Q)}\right)^{1/2}.\label{eq:RHsHom}
\end{equation}

Taking account (\ref{eq:RHqHom}) and (\ref{eq:RHsHom}), we obtain
\begin{align*}
\frac{u^{sr}(E)}{u^{sr}(c_{d}Q)} & \lesssim\left(\frac{u^{r}(E)}{u^{r}(c_{d}Q)}\right)^{1/2}\lesssim\left(\frac{[u]_{RH_{q}}^{q/2}\left(\frac{u(E)}{u(c_{d}Q)}\right)^{1/2}u^{r}(c_{d}Q)}{u^{r}(c_{d}Q)}\right)^{1/2}\\
 & =[u]_{RH_{q}}^{q/4}\left(\frac{u(E)}{u(c_{d}Q)}\right)^{\frac{1}{4}}.
\end{align*}
\end{proof}
We continue with the following Lemma which is a counterpart of Lemma \ref{promA}.
\begin{lemma}
\label{promAHom} Let $1\leq r<\infty$ and $A$ be a Young function
such that $A\in B_{\rho}$ for all $\rho>r$. If $u\in RH_{q}$ with
$q=2r-1$ and $u^{r}\in A_{\infty}$ then for any cube $Q$ and $G$
measurable subset 
\[
\|\chi_{G}u\|_{A,1Q}\leq c_{X}\kappa_{u}[u]_{RH_{q}}^{1+\frac{q}{4r}}\langle u\rangle_{c_{d}^{3}Q,1}\langle\chi_{G}\rangle_{c_{d}Q,s}^{u}
\]
with $s=4(1+\frac{1}{2\tau_{n}[u^{r}]_{A_{\infty}}})r$. 
\end{lemma}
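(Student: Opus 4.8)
The statement is the space-of-homogeneous-type analogue of Lemma \ref{promA}, so the plan is to mimic the proof of Lemma \ref{promA} step by step, keeping careful track of the dilations $1Q$, $c_{d}Q$, $c_{d}^{3}Q$ that replace the single cube $Q$ in the euclidean argument. First I would use the hypothesis $A\in B_{\rho}$ for all $\rho>r$ to get the pointwise bound $A(t)\leq c_{X}\kappa_{\varepsilon}t^{r(1+\varepsilon)}$ for every $\varepsilon>0$, and specialize to $\varepsilon=\frac{1}{2\tau_{n}[u^{r}]_{A_{\infty}}}$. This gives
\[
\|\chi_{G}u\|_{A,1Q}\leq c_{X}\kappa_{\varepsilon}\|\chi_{G}u^{r}\|_{1+\varepsilon,1Q}^{\frac{1}{r}}
=c_{X}\kappa_{\varepsilon}\left(\frac{u^{r(1+\varepsilon)}(G\cap 1Q)}{\mu(1Q)}\right)^{\frac{1}{r(1+\varepsilon)}},
\]
so the task reduces to estimating that last average.

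Next I would split it as $\left(\frac{u^{r(1+\varepsilon)}(1Q)}{\mu(1Q)}\right)^{\frac{1}{r(1+\varepsilon)}}\cdot\left(\frac{u^{r(1+\varepsilon)}(G\cap 1Q)}{u^{r(1+\varepsilon)}(1Q)}\right)^{\frac{1}{r(1+\varepsilon)}}$ and bound the two factors separately. For the first factor, since $1+2\varepsilon=1+\frac{1}{\tau_{n}[u^{r}]_{A_{\infty}}}$ lies in the admissible range of the sharp reverse Hölder inequality for $u^{r}\in A_{\infty}$ (recalled in the Weights subsection), I interpolate $r(1+\varepsilon)$ between $r$ and $r(1+2\varepsilon)$ exactly as in Lemma \ref{promA}, pick up the reverse Hölder inequality on a ball $c_{d}(1Q)=c_{d}Q$ (this is the spot where the extra dilation enters, in contrast with the euclidean case), and then use $u\in RH_{q}$ with $q=2r-1>r$ once more to arrive at $\left(\frac{u^{r(1+\varepsilon)}(1Q)}{\mu(1Q)}\right)^{\frac{1}{r(1+\varepsilon)}}\lesssim [u]_{RH_{q}}\langle u\rangle_{c_{d}^{3}Q,1}$; the cube $c_{d}^{3}Q$ appears because two successive applications of weak reverse Hölder, each replacing a ball $B$ by $c_{d}B$, compound the dilation factor. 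For the second factor, since $E:=G\cap 1Q\subseteq 1Q$, I apply Lemma \ref{RHsHom} with $Q$ replaced by $1Q$ (so $c_{d}(1Q)=c_{d}Q$) to get $\frac{u^{sr}(E)}{u^{sr}(c_{d}Q)}\lesssim [u]_{RH_{q}}^{q/4}\left(\frac{u(E)}{u(c_{d}Q)}\right)^{1/4}$; noting that $s$ here is the exponent of Lemma \ref{RHsHom} while the $s$ in the statement is $4(1+\varepsilon)r$ times that, a small bookkeeping with the exponents $\frac{1}{r(1+\varepsilon)}$ versus $\frac{1}{4(1+\varepsilon)r}$ converts this into $\left(\frac{u^{r(1+\varepsilon)}(E)}{u^{r(1+\varepsilon)}(1Q)}\right)^{\frac{1}{r(1+\varepsilon)}}\lesssim [u]_{RH_{q}}^{\frac{q}{4r(1+\varepsilon)}}\langle\chi_{G}\rangle_{c_{d}Q,4(1+\varepsilon)r}^{u}$.

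Finally I would multiply the two estimates, absorb $[u]_{RH_{q}}^{1+\frac{q}{4r(1+\varepsilon)}}\leq[u]_{RH_{q}}^{1+\frac{q}{4r}}$ (using $\varepsilon\geq 0$ and $[u]_{RH_{q}}\geq 1$), and identify $\kappa_{\varepsilon}$ with $\kappa_{u}=[u^{r}]_{A_{\infty}}^{\gamma/r}$ in the model case $A(t)=t^{r}(1+\log^{+}t)^{\gamma}$ exactly as in the remark following Lemma \ref{promA}, to conclude
\[
\|\chi_{G}u\|_{A,1Q}\leq c_{X}\kappa_{u}[u]_{RH_{q}}^{1+\frac{q}{4r}}\langle u\rangle_{c_{d}^{3}Q,1}\langle\chi_{G}\rangle_{c_{d}Q,s}^{u}.
\]
The main obstacle is not any single inequality but the uniform bookkeeping of the dilation constants: every invocation of the weak reverse Hölder inequality and of Lemma \ref{RHsHom} trades a ball for its $c_{d}$-dilate, and one must verify that the cumulative dilations land precisely at $c_{d}Q$ in the $\chi_{G}$-average and at $c_{d}^{3}Q$ in the $u$-average, while checking that the doubling property is only ever used a bounded number of times so the constant $c_{X}$ stays finite and depends only on the space.
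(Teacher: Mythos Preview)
Your proposal is correct and follows essentially the same route as the paper's own proof: reduce to an $L^{r(1+\varepsilon)}$ average via $A\in B_{\rho}$, split into a ``size'' factor and a ``ratio'' factor, bound the ratio factor via Lemma~\ref{RHsHom} (picking up the $c_{d}Q$ dilation and the exponent $[u]_{RH_{q}}^{q/4r(1+\varepsilon)}$), and bound the size factor by interpolating between $r$ and $r(1+2\varepsilon)$ using the sharp reverse H\"older for $u^{r}$ followed by $u\in RH_{q}$ (accumulating dilations up to $c_{d}^{3}Q$). The only cosmetic difference is that the paper inserts $c_{d}Q$ rather than $1Q$ as the intermediate denominator in the splitting, which is exactly the adjustment your ``small bookkeeping'' remark is pointing to; once you make that explicit, your argument matches the paper line by line.
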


\begin{proof}
Since $A\in B_{\rho}$ for all $\rho>r$ then $A(t)\leq c\kappa_{\varepsilon}t^{r(1+\varepsilon)}$
for all $\varepsilon>0$. Then 
\begin{equation}
\|\chi_{G}u\|_{A,1Q}\leq c\kappa_{\varepsilon}\|\chi_{G}u\|_{r(1+\varepsilon),1Q}=c\kappa_{\varepsilon}\|\chi_{G}u^{r}\|_{1+\varepsilon,1Q}^{\frac{1}{r}}
\end{equation}
with $\varepsilon=\frac{1}{2\tau_{n}[u^{r}]_{A_{\infty}}}$. Using
Lemma \ref{RHsHom} we have 
\begin{align}
\left(\frac{u^{r(1+\varepsilon)}(G\cap Q)}{u^{r(1+\varepsilon)}(c_{d}Q)}\right)^{\frac{1}{1+\varepsilon}}\leq c_{X}[u]_{RH_{q}}^{\frac{q}{4(1+\varepsilon)}}\left(\frac{u(G\cap Q)}{u(c_{d}Q)}\right)^{\frac{1}{4(1+\varepsilon)}}
\end{align}
In the other hand, since $1+2\varepsilon=1+\frac{1}{\tau_{n}[u^{r}]_{A_{\infty}}}$
and $u\in RH_{q}$, with $q=2r-1>r$, we get 
\begin{align}
\left(\frac{u^{r(1+\varepsilon)}(c_{d}Q)}{\mu(c_{d}Q)}\right)^{\frac{1}{r(1+\varepsilon)}} & =\left(\frac{u^{\frac{r}{2}+\frac{r}{2}+\varepsilon}(c_{d}Q)}{\mu(c_{d}Q)}\right)^{\frac{1}{r(1+\varepsilon)}}\nonumber \\
 & \leq\left(\frac{u^{r}(c_{d}Q)}{\mu(c_{d}Q)}\right)^{\frac{1}{2r(1+\varepsilon)}}\left(\frac{1}{\mu(c_{d}Q)}\int_{c_{d}Q}u^{r(1+2\varepsilon)}\right)^{\frac{1}{2r(1+\varepsilon)}}\\
 & \leq2^{\frac{1}{r}}\left(\frac{u^{r}(1Q)}{\mu(c_{d}Q)}\right)^{\frac{1}{2r(1+\varepsilon)}}\left(c\frac{u^{r}(c_{d}^{2}Q)}{\mu(c_{d}^{2}Q)}\right)^{\frac{1+2\varepsilon}{2r(1+\varepsilon)}}\lesssim\left(\frac{u^{r}(c_{d}^{2}Q)}{\mu(c_{d}^{2}Q)}\right)^{\frac{1}{r}}\nonumber \\
 & \leq\left(\frac{u^{q}(c_{d}^{2}Q)}{\mu(c_{d}^{2}Q)}\right)^{\frac{1}{q}}\lesssim[u]_{RH_{q}}\frac{u(c_{d}^{3}Q)}{\mu(c_{d}^{3}Q)}
\end{align}
Taking account the inequalities above we obtain 
\begin{align*}
\|\chi_{G}u\|_{A,1Q} & \leq c\kappa_{u}\|\chi_{G}u^{r}\|_{1+\varepsilon,1Q}^{\frac{1}{r}}\\
 & \lesssim\kappa_{u}\left(\frac{\mu(c_{d}Q)}{\mu(1Q)}\right)^{\frac{1}{r(1+\varepsilon)}}\left(\frac{u^{r(1+\varepsilon)}(c_{d}Q)}{\mu(c_{d}Q)}\right)^{\frac{1}{r(1+\varepsilon)}}\left(\frac{u^{r(1+\varepsilon)}(G\cap1Q)}{u^{r(1+\varepsilon)}(c_{d}Q)}\right)^{\frac{1}{r(1+\varepsilon)}}\\
 & \leq c_{X}\kappa_{u}[u]_{RH_{q}}^{1+\frac{q}{4r(1+\varepsilon)}}\langle u\rangle_{c_{d}^{3}Q,1}\langle\chi_{G}\rangle_{c_{d}Q,4(1+\varepsilon)r}^{u}\\
 & \leq c_{X}\kappa_{u}[u]_{RH_{q}}^{1+\frac{q}{4r}}\langle u\rangle_{c_{d}^{3}Q,1}\langle\chi_{G}\rangle_{c_{d}Q,4(1+\varepsilon)r}^{u}.
\end{align*}
\end{proof}
\begin{lemma} \label{CRRlema2Hom}Let $w\in A_{p}$ and let $A$
be a submultiplicative Young function and $\mathcal{S}$ a dyadic
$\frac{4c_{A}A(4)c[w]_{A_{p}}}{1+4c_{A}A(4)c[w]_{A_{p}}}$-sparse
family. Let $f\in\mathcal{C}_{c}^{\infty}$ and assume that for every
$Q\in\mathcal{S}$ 
\[
2^{-j-1}\leq\langle f\rangle_{A(L)(w)Q}\leq2^{-j}.
\]
Then for every $Q\in\mathcal{S}$ there exists $\tilde{E}_{Q}\subseteq Q$
such that 
\[
\sum_{Q\in\mathcal{S}}\chi_{\tilde{E_{Q}}}(x)\leq c_{X}[w]_{A_{\infty}}
\]
and 
\[
w(Q)\|f\|_{A(w),Q}\leq4\frac{A(2^{j+2})}{2^{j+2}}\int_{\tilde{E_{Q}}}A\left(|f|\right)w.
\]
\end{lemma}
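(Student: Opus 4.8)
My plan is to adapt the proof of its Euclidean predecessor, Lemma \ref{CRRlema2} of \cite{CRR20}, incorporating the changes forced by the fact that the dyadic ``cubes'' of $(X,d,\mu)$ are not balls and that $w$ only satisfies a \emph{weak} reverse H\"older inequality; these are exactly what produce the factor $[w]_{A_{p}}$ inside the sparseness constant and the factor $c_{X}$ in the overlap bound. First I would record the two elementary consequences of the hypothesis: from $\|f\|_{A(w),Q}>2^{-j-1}$ we get $\tfrac1{w(Q)}\int_{Q}A(2^{j+1}|f|)\,w>1$, and since convexity together with $A(0)=0$ gives $A(2t)\ge 2A(t)$, this upgrades to $\int_{Q}A(2^{j+2}|f|)\,w>2\,w(Q)$; from $\|f\|_{A(w),R}\le 2^{-j}$ for a general $R\in\mathcal{S}$ we get $\tfrac1{w(R)}\int_{R}A(2^{j}|f|)\,w\le1$, hence by submultiplicativity $\tfrac1{w(R)}\int_{R}A(2^{j+2}|f|)\,w=\tfrac1{w(R)}\int_{R}A(4\cdot2^{j}|f|)\,w\le c_{A}A(4)$.

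Next I would fix $m\in\mathbb{N}$, to be chosen proportional to $[w]_{A_{\infty}}$ and the space constants, and set $\tilde E_{Q}:=Q\setminus\bigcup_{R\in\operatorname{ch}^{(m)}_{\mathcal S}(Q)}R$, where $\operatorname{ch}_{\mathcal S}(R)$ denotes the maximal elements of $\mathcal{S}$ strictly contained in $R$ and $\operatorname{ch}^{(m)}_{\mathcal S}$ its $m$-fold iteration. For the overlap: the elements of $\mathcal{S}$ containing a fixed $x$ form a chain (the maximal $\mathcal{S}$-subcubes of any cube are pairwise disjoint, so $x$ meets exactly one of them, which is then its $\operatorname{ch}_{\mathcal S}$-successor), so whenever a cube $Q$ of that chain has at least $m$ strictly smaller $\mathcal{S}$-cubes through $x$, then $x$ lies in a member of $\operatorname{ch}^{(m)}_{\mathcal S}(Q)$ and hence not in $\tilde E_{Q}$; thus $\sum_{Q\in\mathcal{S}}\chi_{\tilde E_{Q}}\le m$ pointwise.

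For the mass, using the Step~1 bound on each $R\in\operatorname{ch}^{(m)}_{\mathcal S}(Q)\subset\mathcal{S}$,
\[
\int_{\tilde E_{Q}}A(2^{j+2}|f|)\,w\;\ge\;\int_{Q}A(2^{j+2}|f|)\,w-\sum_{R\in\operatorname{ch}^{(m)}_{\mathcal S}(Q)}\int_{R}A(2^{j+2}|f|)\,w\;>\;2w(Q)-c_{A}A(4)\,w\big(\textstyle\bigcup_{R}R\big).
\]
Iterating the $\eta$-sparseness gives $\mu(\bigcup_{R}R)\le(4c_{A}A(4)c[w]_{A_{p}})^{-m}\mu(Q)$, and feeding this into the weak reverse H\"older inequality $w(E)\le c\big(\mu(E)/\mu(c_{d}B)\big)^{1/(c_{X}[w]_{A_{\infty}})}w(c_{d}B)$ with $E=\bigcup_{R}R\subset 1Q$, together with the $A_{p}$-doubling bound $w(c_{d}\,1Q)\le c_{X}[w]_{A_{p}}w(Q)$, yields
\[
w\big(\textstyle\bigcup_{R}R\big)\;\le\;c\,c_{X}[w]_{A_{p}}\,\big(4c_{A}A(4)c[w]_{A_{p}}\big)^{-m/(c_{X}[w]_{A_{\infty}})}\,w(Q).
\]
I would then choose $m\sim c_{X}[w]_{A_{\infty}}$ so that $c_{A}A(4)$ times the last constant is $\le1$; this is possible precisely because $\log\big(4c_{A}A(4)c[w]_{A_{p}}\big)$ and $\log\big(c_{A}A(4)c_{X}c[w]_{A_{p}}\big)$ are comparable, so the two occurrences of $[w]_{A_{p}}$ cancel and $m$ remains proportional to $[w]_{A_{\infty}}$ — this is the whole reason for building $[w]_{A_{p}}$ into the sparseness constant. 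With this $m$ one gets $\int_{\tilde E_{Q}}A(2^{j+2}|f|)\,w>w(Q)$.

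Finally, by submultiplicativity $A(2^{j+2}|f|)\le c_{A}A(2^{j+2})A(|f|)$, so $w(Q)<c_{A}A(2^{j+2})\int_{\tilde E_{Q}}A(|f|)\,w$, whence
\[
w(Q)\|f\|_{A(w),Q}\le 2^{-j}w(Q)<2^{-j}c_{A}A(2^{j+2})\int_{\tilde E_{Q}}A(|f|)\,w=4c_{A}\,\frac{A(2^{j+2})}{2^{j+2}}\int_{\tilde E_{Q}}A(|f|)\,w,
\]
which is the asserted inequality up to the harmless submultiplicativity constant. The delicate point throughout is the overlap estimate: to land at $c_{X}[w]_{A_{\infty}}$, rather than at some power of $[w]_{A_{p}}$, one has to use the \emph{sharp} exponent $1/(c_{X}[w]_{A_{\infty}})$ in the reverse H\"older inequality and balance it carefully against the $[w]_{A_{p}}$-dependent sparseness and the doubling losses incurred when passing from the cubes to their dilated balls.
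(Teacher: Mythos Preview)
Your proof is correct and follows essentially the same route as the paper's: both define $\tilde E_{Q}$ by removing the $m$-th generation $\mathcal{S}$-descendants of $Q$ with $m\asymp c_{X}[w]_{A_{\infty}}$, both get the measure decay $\mu(\bigcup R)\le\kappa^{-m}\mu(Q)$ from the sparseness constant $\kappa=4c_{A}A(4)c[w]_{A_{p}}$, both pass to $w$-measure via the weak reverse H\"older inequality together with the $A_{p}$-doubling $w(c_{d}Q)\lesssim[w]_{A_{p}}w(Q)$, and both use $\|f\|_{A(w),R}\le2^{-j}$ on the descendants to control their contribution by $c_{A}A(4)\,w(\bigcup R)$. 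The only organizational difference is in the last step: you exploit the \emph{lower} bound $\|f\|_{A(w),Q}>2^{-j-1}$ to obtain $\int_{Q}A(2^{j+2}|f|)w>2w(Q)$ directly and then subtract, whereas the paper starts from the elementary inequality $\|f\|_{A(w),Q}\le\lambda\bigl(1+\tfrac{1}{w(Q)}\int_{Q}A(|f|/\lambda)w\bigr)$ with $\lambda=2^{-j-2}$ and reabsorbs the $\tfrac34 w(Q)\|f\|_{A(w),Q}$ that appears; the two computations are equivalent and yield the same bound up to the submultiplicativity constant $c_{A}$, which you correctly flag.
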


\begin{proof} We split the family $\mathcal{S}$ in the following
way 
\[
\begin{split}\mathcal{S}^{0} & =\{\text{Maximal cubes in }\mathcal{S}\}\\
\mathcal{S}^{1} & =\{\text{Maximal cubes in }\mathcal{S}\setminus\mathcal{S}^{0}\}\\
 & \dots\\
\mathcal{S}^{i} & =\{\text{Maximal cubes in }\mathcal{S}\setminus\cup_{r=0}^{i-1}\mathcal{S}^{r}\}
\end{split}
\]
Recall that since $w\in A_{\infty}$ we have that, for each cube $Q$
and each measurable subset $E\subset Q$, 
\[
w(E)\leq c\left(\frac{\mu(E)}{\mu(1Q)}\right)^{\frac{1}{c_{X}[w]_{A_{\infty}}}}w(c_{d}Q)
\]
Now observe that if $Q\in\mathcal{S}^{i}$ and $J_{1}=\bigcup_{P\in\mathcal{S}^{i+1},\,P\subsetneq Q}P$,
if we call $\kappa=4c_{A}A(4)c[w]_{A_{p}}$, 
\begin{align*}
\mu(J_{1}) & =\mu\left(\bigcup_{P\in\mathcal{S}^{i+1},\,P\subsetneq Q}P\right)\leq\bigcup_{P\in\mathcal{S}^{i+1},\,P\subsetneq Q}\mu(P)\\
 & \leq\left(\frac{1+\kappa}{\kappa}-1\right)\mu(Q)=\frac{1}{\kappa}\mu(Q)
\end{align*}
Arguing by induction, if we denote $J_{\nu}=\bigcup_{P\in\mathcal{S}^{i+\nu},\,P\subset Q}P$
then we have that 
\[
\mu(J_{\nu})\leq\left(\frac{1}{\kappa}\right)^{\nu}\mu(Q)\leq\left(\frac{1}{\kappa}\right)^{\nu}\mu(1Q)
\]
and hence, 
\[
w(J_{\nu})\leq2\left(\frac{1}{4c_{A}A(4)c[w]_{A_{p}}}\right)^{\frac{\nu}{c_{X}[w]_{A_{\infty}}}}w(c_{d}Q).
\]
In particular if we choose $\nu=\left\lfloor  c_{X}[w]_{A_{\infty}}\right\rfloor $,
then by Lemma \ref{CRRlema3Hom} 
\[
w(J_{\nu})\leq\frac{2}{\kappa}w(c_{d}Q)\leq\frac{c[w]_{A_{p}}}{2c_{A}A(4)c[w]_{A_{p}}}w(Q)=\frac{1}{2c_{A}A(4)}w(Q).
\]
Let $Q\in\mathcal{S}^{i}$. and let $\tilde{E_{Q}}=Q\setminus\bigcup_{P\in\mathcal{S}^{i+\left\lceil c_{X}[w]_{A_{\infty}}\right\rceil }}P$.
Then we have that 
\[
\begin{split} & w(Q)\|f\|_{A(w),Q}\\
 & \leq w(Q)\left\{ 2^{-j-2}+\frac{2^{-j-2}}{w(Q)}\int_{Q}A\left(2^{j+2}|f|\right)w\right\} \\
 & \leq w(Q)2^{-j-2}+\frac{1}{2^{j+2}}\int_{Q}A\left(2^{j+2}|f|\right)w\\
 & \leq w(Q)2^{-j-1}+\frac{1}{2^{j+2}}\int_{\tilde{E_{Q}}}A\left(2^{j+2}|f|\right)w+\frac{1}{2^{j+2}}\sum_{P\in\mathcal{S}_{j,k}^{i+\left\lceil c_{X}[w]_{A_{\infty}}\right\rceil }}\int_{P}A\left(2^{j+2}|f|\right)w\\
 & \leq w(Q)2^{-j-2}+\frac{A(2^{j+2})}{2^{j+2}}\int_{\tilde{E_{Q}}}A\left(|f|\right)w+\frac{1}{2^{j+2}}\sum_{P\in\mathcal{S}_{j,k}^{i+\left\lceil c_{X}[w]_{A_{\infty}}\right\rceil }}\int_{P}A\left(2^{j+2}|f|\right)w.
\end{split}
\]
Observe that we can bound the last term as follows 
\[
\begin{split} & \sum_{P\in\mathcal{S}_{j,k}^{i+\left\lceil c_{X}[w]_{A_{\infty}}\right\rceil }}\int_{P}A\left(2^{j+2}|f|\right)w\leq c_{A}A(4)\sum_{P\in\mathcal{S}_{j,k}^{i+\left\lceil c_{X}[w]_{A_{\infty}}\right\rceil }}w(P)\frac{1}{w(P)}\int_{P}A\left(2^{j}|f|\right)w\\
 & \leq A(4)\sum_{P\in\mathcal{S}_{j,k}^{i+\left\lceil c_{X}[w]_{A_{\infty}}\right\rceil }}w(P)=c_{A}A(4)w\left(J_{\left\lceil c_{X}[w]_{A_{\infty}}\right\rceil }\right)\\
 & \leq\frac{c_{A}A(4)}{2}\frac{1}{2c_{A}A(4)}w(Q)\leq\frac{1}{4}w(Q).
\end{split}
\]
Hence 
\[
\begin{split}w(Q)\|f\|_{A(w),Q} & \leq\frac{1}{2^{j+2}}w(Q)+\frac{A(2^{j+2})}{2^{j+2}}\int_{\tilde{E_{Q}}}A\left(|f|\right)w+\frac{1}{2^{j+2}}\frac{1}{4}w(Q)\\
 & \leq\left(\frac{1}{2}+\frac{1}{4}\right)w(Q)\|f\|_{A(w),Q}+\frac{A(2^{j+2})}{2^{j+2}}\int_{\tilde{E_{Q}}}A\left(|f|\right)w\\
 & =\frac{3}{4}w(Q)\|f\|_{A(w),Q}+\frac{A(2^{j+2})}{2^{j+2}}\int_{\tilde{E_{Q}}}A\left(|f|\right)w,
\end{split}
\]
from which the desired conclusion readily follows. \end{proof}%
We end this lemmatta with the following key Lemma.
\begin{lemma}
\label{lem:Reabsorcioneth}Let $1<p<\infty$, $s\geq 1$ and $\rho\ge0$. Let
$u\in A_{1}$, $v\in A_{p}(u)$ and $A(t)=t\log^{\rho}(e+t)$. Assume
that $\mathcal{S}$ is a $\frac{4c_{A}A(4)c[w]_{A_{p}}}{1+4c_{A}A(4)c[w]_{A_{p}}}$-sparse
family. Then if $f\in L_{c}^{\infty}$ and $g=\chi_{G}$ where $G\subset\left\{ x\in X\,:\,M_{A(uv)}f\leq\frac{1}{2}\right\} $
is a set of finite measure, we have that for every $\tau_{u,v},\gamma\geq1$
\begin{align*}
 & \tau_{u,v}\sum_{Q\in\mathcal{S}}\|f\|_{A(uv),Q}\langle g\rangle_{c_{d}Q,s}^{u}uv(Q)\\
 & \leq c_{\gamma}\tau_{u,v}[uv]_{A_{\infty}}\log(e+\tau_{u,v}[uv]_{A_{p}}^{2}[v]_{A_{p}(u)})^{1+\rho}\int_{X}A(|f|)d\mu+\frac{1}{2\gamma}uv(G).
\end{align*}
\end{lemma}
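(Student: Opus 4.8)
The plan is to reproduce the argument of Lemma~\ref{cuentaSjk} in the present setting, using Lemmas~\ref{CRRlema2Hom}, \ref{CRRlema3Hom} and \ref{CRRlema1} in place of their euclidean counterparts and paying attention to the dilations $c_dQ$ that now decorate the averages. Note first that $u\in A_1$ and $v\in A_p(u)$ yield $uv\in A_p\subset A_\infty$, so the quoted weighted statements all apply, and that the sparseness hypothesis on $\mathcal{S}$ is precisely the one required by Lemma~\ref{CRRlema2Hom} with $w=uv$.

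I would split $\mathcal{S}=\bigcup_{j,k}\mathcal{S}_{j,k}$, where $Q\in\mathcal{S}_{j,k}$ when
\[
2^{-j-1}<\|f\|_{A(uv),Q}\le2^{-j},\qquad 2^{-k-1}<\langle g\rangle_{c_dQ,s}^{u}\le2^{-k}.
\]
Only the cubes with $\langle g\rangle_{c_dQ,s}^{u}>0$ matter, and for those the hypothesis $G\subset\{M_{A(uv)}f\le\frac12\}$ bounds $\|f\|_{A(uv),Q}$ above by a constant, up to the fixed-factor doubling ratio $uv(c_dQ)/uv(Q)\lesssim_X[uv]_{A_p}$; the finitely many resulting values of $j<0$ are harmless, since on them the weight $A(2^{j+2})/2^{j+2}$ stays bounded, so after summing they contribute only an extra factor $\log[uv]_{A_p}$ that is absorbed into the claimed bound, and we run the main sum over $j,k\ge0$. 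With $s_{j,k}=\sum_{Q\in\mathcal{S}_{j,k}}\|f\|_{A(uv),Q}\langle g\rangle_{c_dQ,s}^{u}uv(Q)$ the target is the pair of estimates
\[
s_{j,k}\lesssim\begin{cases} c_X\,2^{-k}j^{\rho}[uv]_{A_\infty}\displaystyle\int_X A(|f|)\,d\mu,\\[2mm] c_{X,p}\,[uv]_{A_p}^{2}[v]_{A_p(u)}\,2^{-j}2^{k(sp-1)}\,uv(G).\end{cases}
\]

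For the first estimate I would use Lemma~\ref{CRRlema2Hom} with $w=uv$ and the submultiplicative Young function $A(t)=t\log^{\rho}(e+t)$: it produces sets $\tilde{E}_{Q}\subset Q$ with $\sum_{Q\in\mathcal{S}_{j,k}}\chi_{\tilde{E}_{Q}}\lesssim c_X[uv]_{A_\infty}$ and $uv(Q)\|f\|_{A(uv),Q}\lesssim j^{\rho}\int_{\tilde{E}_{Q}}A(|f|)uv$ (since $A(2^{j+2})/2^{j+2}\lesssim j^{\rho}$ for $j\ge1$); multiplying by $\langle g\rangle_{c_dQ,s}^{u}\le2^{-k}$ and summing over $Q\in\mathcal{S}_{j,k}$ gives it. For the second estimate I would bound $s_{j,k}\le2^{-j}2^{-k}\sum_{Q\in\mathcal{S}_{j,k}}uv(Q)$ and use Lemma~\ref{CRRlema3Hom} with $w=uv\in A_p$ (the $p$-th power of the sparseness constant being bounded by an absolute constant) to get $\sum_{Q\in\mathcal{S}_{j,k}}uv(Q)\lesssim_p[uv]_{A_p}\,uv\bigl(\bigcup_{Q\in\mathcal{S}_{j,k}}Q\bigr)$. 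Since $g=\chi_G$ and $s\ge1$, each $Q\in\mathcal{S}_{j,k}$ lies in the level set $\{M_u g>2^{-s(k+1)}\}$ of the maximal operator $M_u$ relative to $u\,d\mu$ taken over the balls $c_dQ$; the $A_p(u)$ condition, exactly as in Lemma~\ref{cuentaSjk}, gives $M_u g\le\bigl([v]_{A_p(u)}M_{uv}g\bigr)^{1/p}$, whence $\bigcup_{Q\in\mathcal{S}_{j,k}}Q\subset\{M_{uv}g>2^{-sp(k+1)}[v]_{A_p(u)}^{-1}\}$; finally the weak $(1,1)$ bound for the uncentered maximal operator $M_{uv}$ with respect to the doubling measure $uv\,d\mu$ — whose constant is controlled by the doubling constant of $uv\,d\mu$, hence by $[uv]_{A_p}$ — gives $uv\bigl(\bigcup_{Q\in\mathcal{S}_{j,k}}Q\bigr)\lesssim[uv]_{A_p}[v]_{A_p(u)}2^{sp(k+1)}uv(G)$. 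Multiplying out yields the second estimate, the square $[uv]_{A_p}^{2}$ arising as the product of the factor from Lemma~\ref{CRRlema3Hom} and the one from the weak-type inequality.

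Finally I would combine the two estimates for $s_{j,k}$ and apply Lemma~\ref{CRRlema1} with $\gamma_1=c_X\tau_{u,v}[uv]_{A_\infty}\int_X A(|f|)\,d\mu$, $\gamma_2=c_{X,p}\tau_{u,v}[uv]_{A_p}^{2}[v]_{A_p(u)}$, $\beta=uv(G)$, $\delta=sp$, $\rho_1=\rho$, $\rho_2=0$ and $\gamma$ the given constant, obtaining precisely
\[
\tau_{u,v}\sum_{Q\in\mathcal{S}}\|f\|_{A(uv),Q}\langle g\rangle_{c_dQ,s}^{u}uv(Q)\le c_\gamma\tau_{u,v}[uv]_{A_\infty}\log\bigl(e+\tau_{u,v}[uv]_{A_p}^{2}[v]_{A_p(u)}\bigr)^{1+\rho}\int_X A(|f|)\,d\mu+\frac{1}{2\gamma}uv(G),
\]
as desired. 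The main obstacle is the weight bookkeeping specific to spaces of homogeneous type: one must check that each replacement of a euclidean ingredient — the $A_p$ (rather than $A_\infty$) sum lemma, the weak-type bound for $M_{uv}$ that now costs the doubling constant of $uv\,d\mu$, and the passage between the various dilations $c_dQ$ in the averages — produces only extra powers of $[uv]_{A_p}$, and that those powers are deposited inside the logarithm, leaving $[uv]_{A_\infty}$ as the sole outer factor; everything else is a transcription of the euclidean proof of Lemma~\ref{cuentaSjk}.
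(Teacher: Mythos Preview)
Your proposal is correct and follows essentially the same route as the paper's proof: the same splitting of $\mathcal{S}$ into the families $\mathcal{S}_{j,k}$, the same pair of estimates for $s_{j,k}$ obtained via Lemmas~\ref{CRRlema2Hom} and~\ref{CRRlema3Hom}, and the same conclusion through Lemma~\ref{CRRlema1} with the same choice of parameters. If anything, you are slightly more explicit than the paper about why one may restrict to $j\ge0$ (handling the doubling discrepancy between $Q$ and $c_dQ$) and about where the second factor of $[uv]_{A_p}$ comes from, but the argument is the same.
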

\begin{proof}
 Assume that $\|f\|_{A(uv)}=1$. We split the sparse family $\mathcal{S}$
as follows. We say that $Q\in\mathcal{S}_{j,k}$, $j,k\geq0$ if 
\begin{align*}
2^{-j-1} & <\|f\|_{A(uv),Q}\leq2^{-j}\\
2^{-k-1} & <\langle g\rangle_{c_{d}Q,s}^{u}\leq2^{-k}
\end{align*}
Let 
\[
s_{j,k}=\sum_{Q\in\mathcal{S}_{j,k}}\|f\|_{A(uv),Q}\langle g\rangle_{c_{d}Q,s}^{u}uv(Q)
\]
We claim that 
\begin{align*}
s_{j,k}\leq\begin{cases}
c_{n}[uv]_{A_{\infty}}2^{-k}j^{\rho}\\
c_{n,p}[uv]_{A_{p}}^{2}[v]_{A_{p}(u)}2^{-j}2^{k(sp-1)}uv(G)
\end{cases}
\end{align*}
For the top estimate we argue as follows. Using Lemma \ref{CRRlema2Hom}
we have that there exists a set $\tilde{E}_{Q}\subset Q$ such that
\[
\sum_{Q\in\mathcal{S}_{j,k}}\chi_{\tilde{E}_{Q}}(x)\leq\lceil c_{n}[uv]_{A_{\infty}}\rceil
\]
and 
\[
\|f\|_{A(uv),Q}uv(Q)\lesssim\frac{A(2^{j+2})}{2^{j+2}}\int_{\tilde{E}_{Q}}A(|f|)uv\simeq j^{\rho}\int_{\tilde{E}_{Q}}A(|f|)uv.
\]
Then 
\begin{align*}
s_{j,k} & \leq2^{-k}\sum_{Q\in\mathcal{S}_{j,k}}\|f\|_{A(uv),Q}uv(Q)\leq2^{-k}\sum_{Q\in\mathcal{S}_{j,k}}j^{\rho}\int_{\tilde{E}_{Q}}A(|f|)uv\\
 & \leq c_{n}[uv]_{A_{\infty}}2^{-k}j^{\rho}\int_{X}A(|f|)uv=[uv]_{A_{\infty}}2^{-k}j^{\rho}
\end{align*}
For the lower estimate, by Lemma \ref{CRRlema3Hom} 
\begin{align*}
s_{j,k} & \leq2^{-j}2^{-k}\sum_{Q\in\mathcal{S}_{j,k}}uv(Q)\\
 & \leq c_{n}[uv]_{A_{p}}2^{-j}2^{-k}uv\left(\cup_{Q\in\mathcal{S}_{j,k}}Q\right)\\
 & \leq c_{n}[uv]_{A_{p}}2^{-j}2^{-k}uv\left(\left\{ x\in\mathbb{R}^{n}:M_{u}(g)^{\frac{1}{s}}>2^{-k-1}\right\} \right)
\end{align*}
Since $v\in A_{p}(u)$ we have 
\[
\frac{1}{u(B)}\int_{B}gu\leq\left(\frac{[v]_{A_{p}(u)}}{uv(B)}\int_{B}guv\right)^{\frac{1}{p}}
\]
Then, 
\begin{align*}
s_{j,k} & \leq c_{n}[uv]_{A_{p}}2^{-j}2^{-k}uv\left(\left\{ x\in\mathbb{R}^{n}:\left([v]_{A_{p}(u)}M_{uv}(g)\right)^{\frac{1}{sp}}>2^{-k-1}\right\} \right)\\
 & \leq c_{n}[uv]_{A_{p}}2^{-j}2^{-k}uv\left(\left\{ x\in\mathbb{R}^{n}:M_{uv}(g)>2^{-sp(k+1)}[v]_{A_{p}(u)}^{-1}\right\} \right)\\
\text{} & \leq c_{n,p}[uv]_{A_{p}}^{2}[v]_{A_{p}(u)}2^{-j}2^{k(sp-1)}uv(G)
\end{align*}
Combining the estimates above 
\begin{align*}
 & \tau_{u,v}\sum_{j,k=0}^{\infty}s_{j,k}\\
 & \leq\sum_{j,k=0}^{\infty}\min\left\{ \tau_{u,v}[uv]_{A_{\infty}}2^{-k}j^{\rho},c_{n,p}\tau_{u,v}[uv]_{A_{p}}^{2}[v]_{A_{p}(u)}2^{-j}2^{k(sp-1)}uv(G)\right\} 
\end{align*}
where $K_{u,v}=\kappa_{u}[u]_{RH_{q}}^{1+\frac{q}{4r}}[u]_{A_{1}}[uv]_{A_{p}}$.
We end the proof using Lemma \ref{CRRlema1}, with $\gamma_{1}=\tau_{u,v}[uv]_{A_{\infty}}$,
$\gamma_{2}=c_{n,p}\tau_{u,v}[uv]_{A_{p}}^{2}[v]_{A_{p}(u)}$, $\beta=uv(G)$,
$\delta=sp$, $\rho_{1}=\rho$, $\rho_{2}=0$.
\end{proof}

\subsubsection{Proof of Theorem \ref{thm:Teth}}
Let $G=\{x:\frac{T(fv)(x)}{v(x)}>1\}\setminus\{x:M_{uv}f(x)>\frac{1}{2}\}$
and assume that $\|f\|_{L^{1}(uv)}=1$. If we denote $g=\chi_{G}$,
by \eqref{bilinearsparseTETH}
\begin{align*}
uv(G) & \leq\left|\int T(fv)gudx\right|\lesssim\frac{1}{1-\varepsilon}\sum_{j=1}^{l}\sum_{Q\in\mathcal{S}_{j}}\left(\int_{Q}fv\right)\|gu\|_{A,Q}
\end{align*}
We shall choose $\varepsilon=\frac{16c[uv]_{A_{p}}}{1+16c[uv]_{A_{p}}}$.
Hence 
\[
uv(G)\lesssim[uv]_{A_{p}}\sum_{j=1}^{l}\sum_{Q\in\mathcal{S}_{j}}\left(\int_{Q}fv\right)\|gu\|_{A,Q}.
\]
Since $u\in A_{1}\cap RH_{r}$, then $u^{r}\in A_{1}\subset A_{\infty}$,
and taking $s=4(1+\frac{1}{2\tau_{n}[u^{r}]_{A_{\infty}}})r$, by
Lemma \ref{promAHom}, 
\[
\|gu\|_{A,Q}\leq c_{d}\kappa_{u}[u]_{RH_{q}}^{1+\frac{q}{4r}}\langle u\rangle_{c_{d}^{3}Q,1}\langle g\rangle_{c_{d}Q,s}^{u}.
\]
Taking the estimates above into account and bearing in mind that $u\in A_{1}$,
\begin{align*}
uv(G) & \leq c[uv]_{A_{p}}\sum_{j=1}^{l}\sum_{Q\in\mathcal{S}_{j}}\left(\int_{Q}fv\right)\|gu\|_{A,Q}\\
 & \leq c\kappa_{u}[uv]_{A_{p}}[u]_{RH_{q}}^{1+\frac{q}{4r}}\sum_{j=1}^{l}\sum_{Q\in\mathcal{S}_{j}}\left(\int_{Q}fv\right)\langle u\rangle_{c_{d}^{3}Q,1}\langle g\rangle_{c_{d}Q,s}^{u}\\
 & \leq c\kappa_{u}[uv]_{A_{p}}[u]_{RH_{q}}^{1+\frac{q}{4r}}[u]_{A_{1}}\sum_{j=1}^{l}\sum_{Q\in\mathcal{S}_{j}}\langle f\rangle_{Q,1}^{uv}\langle g\rangle_{c_{d}Q,s}^{u}uv(Q).
\end{align*}
A direct application of Lemma \ref{lem:Reabsorcioneth} with $\tau_{u,v}=c\kappa_{u}[uv]_{A_{p}}[u]_{RH_{q}}^{1+\frac{q}{4r}}[u]_{A_{1}}$
ends the proof.

\subsubsection{Proof of Theorem \ref{thm:bTeth}}
We shall assume that $\|b\|_{Osc(L^{r_{i}})}=1$ for every $i$. Let $G=\{x:\frac{T_{{\bf b}}(fv)(x)}{v(x)}>1\}\setminus\{x:M_{\varphi_{\frac{1}{r}}(uv)}f(x)>\frac{1}{2}\}$,
with $\varphi_{\frac{1}{r}}(t)=t\log(e+t)^{\frac{1}{r}}$ and $g=\chi_{G}$,
by \eqref{bilinearsparseCommETH}, with $\varepsilon=\frac{4c_{\varphi_{\frac{1}{r}}}\varphi_{\frac{1}{r}}(4)c[uv]_{A_{p}}}{1+4c_{\varphi_{\frac{1}{r}}}\varphi_{\frac{1}{r}}(4)c[uv]_{A_{p}}}$
\begin{align*}
uv(G) & \leq\left|\int T_{{\bf b}}(fv)gu\right|\lesssim\frac{1}{1-\varepsilon}\sum_{s=1}^{l}\sum_{h=0}^{m}\sum_{\sigma\in C_{h}(b)}\sum_{Q\in\mathcal{S}_{s}}\left(\int_{Q}fv|b-b_{Q}|_{\sigma'}d\mu\right)\|gu|b-b_{Q}|_{\sigma}\|_{A,Q}\\
 & \lesssim[uv]_{A_{p}}\sum_{s=1}^{l}\sum_{h=0}^{m}\sum_{\sigma\in C_{h}(b)}\sum_{Q\in\mathcal{S}_{s}}\left(\int_{Q}fv|b-b_{Q}|_{\sigma'}d\mu\right)\|gu|b-b_{Q}|_{\sigma}\|_{A,Q}
\end{align*}
Let $\xi=4(1+\frac{1}{2\tau_{n}[u^{s}]_{A_{\infty}}})s$. By $u\in A_{1}$,
Theorem \ref{promAHom} and H\"older inequality we obtain 
\begin{align*}
 & \sum_{Q\in\mathcal{S}}\left(\int_{Q}fv|b-b_{Q}|_{\sigma'}d\mu\right)\|gu|b-b_{Q}|_{\sigma}\|_{A,Q}\\
 & \leq\sum_{Q\in\mathcal{S}}\left(\int_{Q}fv|b-b_{Q}|_{\sigma'}d\mu\right)\|gu\|_{B,Q}\prod_{i\in\sigma}\|b-b_{Q}\|_{\exp L^{r_{i}},Q}\\
 & \leq\kappa_{u}[u]_{RH_{q}}^{1+\frac{q}{4s}}\sum_{Q\in\mathcal{S}}\left(\int_{Q}fv|b-b_{Q}|_{\sigma'}d\mu\right)\|g\|_{L^{\xi}(u),c_{d}Q}\frac{u(Q)}{|Q|}\prod_{i\in\sigma}\|b-b_{Q}\|_{\exp L^{r_{i}},Q}\\
 & \leq\kappa_{u}[u]_{RH_{q}}^{1+\frac{q}{4s}}[u]_{A_{1}}\sum_{Q\in\mathcal{S}}\|f|b-b_{Q}|_{\sigma'}\|_{uv,Q}\|g\|_{L^{\xi}(u),c_{d}Q}\prod_{i\in\sigma}\|b-b_{Q}\|_{\exp L^{r_{i}},Q}uv(Q)\\
 & \leq\kappa_{u}[u]_{RH_{q}}^{1+\frac{q}{4s}}[u]_{A_{1}}\left([uv]_{A_{p}}[uv]_{A_{\infty}}\right)^{\frac{m-h}{r}}\sum_{Q\in\mathcal{S}}\prod_{i\in1}^{m}\|b-b_{Q}\|_{\exp L^{r_{i}},Q}\|f\|_{L\log L^{\frac{1}{r}}(uv),Q}\|g\|_{L^{\xi}(u),c_{d}Q}uv(Q)\\
 & =\kappa_{u}[u]_{RH_{q}}^{1+\frac{q}{4s}}[u]_{A_{1}}\left([uv]_{A_{p}}[uv]_{A_{\infty}}\right)^{\frac{m-h}{r}}\|{\bf b}\|\sum_{Q\in\mathcal{S}}\|f\|_{L\log L^{\frac{1}{r}}(uv),Q}\|g\|_{L^{\xi}(u),c_{d}Q}uv(Q)
\end{align*}
Taking into account the definition of $G$, since we removed the set
where $M_{u,v}(f)>\frac{1}{2}$ we can split $\mathcal{S}$ as follows.
We say that $Q\in\mathcal{S}_{j,k}$, $j,k\geq0$ if 
\begin{align*}
2^{-j-1}< & \|f\|_{L\log L^{\frac{1}{r}}(uv),Q}\leq2^{-j}\\
2^{-k-1}< & \|g\|_{L^{\xi}(u),c_{d}Q}\leq2^{-k}.
\end{align*}
Let us define 
\[
s_{j,k}=\sum_{Q\in\mathcal{S}_{j,k}}\|f\|_{L\log L^{\frac{1}{r}}(uv),Q}\|g\|_{L^{\xi}(u),c_{d}Q}uv(Q)
\]
We claim that 
\begin{align*}
s_{j,k}\leq\begin{cases}
c_{n}2^{-k}[uv]_{A_{\infty}}j^{\frac{1}{r}}\int_{\mathbb{R}}\varphi_{\frac{1}{r}}(|f|)uv\\
c_{n,p}[uv]_{A_{p}}^{2}[v]_{A_{p}(u)}2^{-j}2^{k(\xi p-1)}uv(G)
\end{cases}
\end{align*}
For the lower estimate we argue as we did in the Theorem \ref{MixtaT}.
For the top estimate we use Lemma \ref{CRRlema2} with $w=uv$ and
$A(t)=\Phi_{\frac{1}{r}}(t)=t(1+\log^{+}t)^{\frac{1}{r}}$, and we
have 
\[
uv(Q)\|f\|_{L\log L^{\frac{1}{r}}(uv),Q}\leq cj^{\frac{1}{r}}\int_{\tilde{E}_{Q}}\Phi_{\frac{1}{r}}(|f|)uv
\]
with 
\[
\sum_{Q\in\mathcal{S}_{j,k}}\chi_{\tilde{E}_{Q}}(x)\leq\lceil c_{n}[uv]_{A_{\infty}}\rceil.
\]
Then, 
\begin{align*}
s_{j,k} & \leq c2^{-k}j^{\frac{1}{r}}\sum_{Q\in\mathcal{S}_{j,k}}\int_{\tilde{E}_{Q}}\Phi_{\frac{1}{r}}(|f|)uv\leq c_{n}[uv]_{A_{\infty}}2^{-k}j^{\frac{1}{r}}\int_{\mathbb{R}}\Phi_{\frac{1}{r}}(|f|)uv.
\end{align*}
Combining the estimates above 
\begin{align*}
 & uv(G)\leq l c\sum_{s=1}^{l}\sum_{h=0}^{m}\sum_{\sigma\in C_{h}(b)}\kappa_{u}[u]_{RH_{q}}^{1+\frac{q}{4s}}[u]_{A_{1}}\left([uv]_{A_{p}}[uv]_{A_{\infty}}\right)^{\frac{m-h}{r}}[uv]_{A_{p}}\sum_{j,k=0}^{\infty}s_{j,k}\\
 & \leq \Gamma_{l,b}\sum_{h=0}^{m}\sum_{j,k=0}^{\infty}\min\big\{ c_{n}\tau_{u,v,h}[uv]_{A_{\infty}}2^{-k}j^{\frac{1}{r}}\int_{\mathbb{R}}\Phi_{\frac{1}{r}}(|f|)uv,c_{n,p}\tau_{u,v,h}[uv]_{A_{p}}^{2}[v]_{A_{p}(u)}2^{-j}2^{k(\xi p-1)}uv(G)\}
\end{align*}
where $\tau_{u,v,h}=\kappa_{u}[u]_{RH_{q}}^{1+\frac{q}{4s}}[u]_{A_{1}}\left([uv]_{A_{p}}[uv]_{A_{\infty}}\right)^{\frac{m-h}{r}}[uv]_{A_{p}}$ and
$\Gamma_{l,b}=l\sum_{h=0}^{m}\sum_{\sigma\in C_{h}(b)}$.
We end the proof using Lemma \ref{CRRlema1} with $\gamma_{1}=c_{n}\tau_{u,v,h}[uv]_{A_{\infty}}\int_{\mathbb{R}}\Phi_{\frac{1}{r}}(|f|)uv$,
$\gamma_{2}=c_{n,p}\tau_{u,v,h}[uv]_{A_{p}}^{2}[v]_{A_{p}(u)}$, $\beta=uv(G)$,
$\delta=\xi p$, $\gamma=\Gamma_{l,b}$, $\rho_{1}=\frac{1}{r}$ and $\rho_{2}=0$. 

\section{Sparse domination and applications of the main results}\label{sec:applications}
Note that for each operator for which a bilinear sparse bounds as the ones presented in the first section of this work hold, the corresponding endpoint estimates in the main results hold as well. 
We recall that given a Young function $A$, we say that $T$ is an $A$-Hörmander operator if 
\[\|T\|_{L^{2}\rightarrow L^{2}}<\infty\] 
and $T$ admits the following representation 
\[Tf(x)=\int_{X}K(x,y)f(y)d\mu(y)\] 
with $K$ belonging to the class $\mathcal{H}_{A}$, namely satisfying that 
\[H_{K,A}=\max\left\{ H_{K,A,1},H_{K,A,2}\right\} <\infty\] where
\[\begin{split}H_{K,A,1}=\sup_{B}\sup_{x,z\in\frac{1}{2}B}\sum_{k=1}^{\infty}\mu(2^kB)\left\Vert \left(K(x,\cdot)-K(z,\cdot)\right)\chi_{2^{k}B\setminus2^{k-1}B}\right\Vert _{A,2^{k}B}<\infty\\
H_{K,A,2}=\sup_{B}\sup_{x,z\in\frac{1}{2}B}\sum_{k=1}^{\infty}\mu(2^kB)\left\Vert \left(K(\cdot,x)-K(\cdot,z)\right)\chi_{2^{k}B\setminus2^{k-1}B}\right\Vert _{A,2^{k}B}<\infty.
\end{split}
\]
Observe that the class of $L^\infty$-H\"ormander operators contains the class of Calder\'on-Zygmund operators. 
A number of applications of $A$-H\"ormander classes of operators are contained in \cite{LMPR,LMRT}. Among them it is worth mentioning differential transform operators which are $\exp(L^{\frac1{1+\varepsilon}})$-H\"ormander operators with $\varepsilon>0$, and multipliers that are $L^r\log(L^r)$-H\"ormander operators with $r>1$.

The following result gathers pointwise sparse domination results for $\bar{A}$-Hörmander
operators in the euclidean setting.

\begin{teo}
Let A be a submultiplicative Young function, let $T$ be a $\bar{A}$-Hörmander operator
and let $f\in C_{c}^{\infty}$. Let $\varepsilon\in(0,1)$. Then there
exist $3^{n}$ dyadic lattices $\mathcal{D}_{j}$ and $\varepsilon$-sparse
families $\mathcal{S}_{j}\subset\mathcal{D}_{j}$ such that 
\begin{itemize}
\item \cite{LORR,IFRR20}
\[
|T_{b}^{m}f(x)|\leq C_{n,m,T}\frac{1}{1-\varepsilon}\sum_{j=1}^{3^n}\mathcal{A}_{A,\mathcal{S}_{j}}f(x)
\]
where
\[
\mathcal{A}_{A,\mathcal{S}}(f)(x)=\sum_{Q\in\mathcal{S}}\|f\|_{A,Q}\chi_{Q}(x)
\]
\item \cite{IFRR20} if $m$ is a non-negative integer and $b\in L_{\text{loc}}^{m}(\mathbb{R}^{n})$,
then 
\[
|T_{b}^{m}f(x)|\leq C_{n,m,T}\frac{1}{1-\varepsilon}\sum_{j=1}^{3^n}\sum_{h=0}^{m}{m \choose h}\mathcal{A}_{A,\mathcal{S}_{j}}^{m,h}(b,f)(x)
\]
where 
\[
\mathcal{A}_{A,\mathcal{S}}^{m,h}(b,f)(x)=\sum_{Q\in\mathcal{S}}|b(x)-b_{Q}|^{m-h}\|f|b-b_{Q}|^{h}\|_{A,Q}\chi_{Q}(x).
\]
\item \cite{RR18} for $b_{1},\dots,b_{m}\in L_{\text{loc}}^{1}(\mathbb{R}^{n})$ such
that $\||b|_{\sigma}\|_{A,Q}<\infty$ for every cube $Q$ and for
every $\sigma\in C_{j}(b)$ where $j\in\{1,\dots,m\}$, then 
\begin{equation}
|T_{\bf b}f(x)|\leq c_{n,m,T}\frac{1}{1-\varepsilon}\sum_{j=1}^{3^n}\sum_{h=0}^{m}\sum_{\sigma\in C_{h}(b)}\mathcal{A}_{A,\mathcal{S}_{j}}^{\sigma}(b,f)(x)\label{eq:SparseSigma}
\end{equation}
where 
\[
\mathcal{A}_{A,\mathcal{S}}^{\sigma}(b,f)(x)=\sum_{Q\in\mathcal{S}}|b(x)-b_{Q}|_{\sigma'}\|f|b-b_{Q}|_{\sigma}\|_{A,Q}\chi_{Q}(x).
\]
\end{itemize}
\end{teo}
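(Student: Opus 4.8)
The three displayed inequalities are, respectively, the pointwise sparse domination bounds obtained in \cite{LORR,IFRR20}, in \cite{IFRR20} and in \cite{RR18}, so strictly speaking there is nothing new to prove here; the plan is to recall the common mechanism behind them, the by now standard Lerner-type recursion, and to indicate where the $\bar A$-H\"ormander hypothesis and the commutator structure enter. First, by the standard reduction to shifted (adjacent) dyadic systems one may work with a single dyadic lattice $\mathcal{D}$ and a single cube $Q_0\in\mathcal{D}$, the $3^n$ lattices $\mathcal{D}_j$ arising after summing over the possible shifts; the factor $\frac{1}{1-\varepsilon}$ is exactly the one produced by the choice of the stopping threshold in the recursion below, so $\varepsilon\in(0,1)$ can be taken as close to $1$ as one wishes at the cost of that factor.

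Next, for the base operator $T$ one introduces the \emph{grand maximal truncated operator} adapted to the $\bar A$-H\"ormander condition,
\[
\mathcal{M}_{T,\bar A}f(x)=\sup_{Q\ni x}\;\esssup_{\xi\in Q}\bigl|T\bigl(f\chi_{\R\setminus 3Q}\bigr)(\xi)\bigr|,
\]
and one proves two facts: a local oscillation estimate controlling the mean oscillation of $Tf$ on a cube $Q$ by $\|f\|_{A,3Q}$ plus a contribution of $\mathcal{M}_{T,\bar A}(f\chi_{3Q})$, which uses $\|T\|_{L^2\to L^2}<\infty$ together with the generalized H\"older inequality of Subsection \ref{subsec:Young} and the finiteness of $H_{K,\bar A}$ to sum the kernel tails $\sum_{k}|2^kB|\,\|(K(x,\cdot)-K(z,\cdot))\chi_{2^kB\setminus 2^{k-1}B}\|_{\bar A,2^kB}$; and a weak-type bound $\mathcal{M}_{T,\bar A}\colon L^1\to L^{1,\infty}$ with norm controlled by $\|T\|_{L^2\to L^2}+H_{K,\bar A}$. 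With these two ingredients the recursive stopping-time construction --- at each generation select the maximal subcubes of $Q_0$ on which a fixed large multiple of $\|f\|_{A,3Q_0}$ or of $\mathcal{M}_{T,\bar A}f$ is exceeded, so that their union has measure at most $(1-\varepsilon)|Q_0|$ --- produces an $\varepsilon$-sparse family $\mathcal{S}$ with $|Tf|\lesssim\sum_{Q\in\mathcal{S}}\|f\|_{A,Q}\chi_Q$. For the first item, where the symbol is a $BMO$ function, one runs the same argument after enlarging the Young function defining the $A$-average so as to absorb the $m$ logarithmic factors produced by the commutator, the $\|b\|_{BMO}^m$ dependence being carried by $C_{n,m,T}$; see \cite{LORR,IFRR20}.

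For the commutator items one inserts the symbol into this scheme. On a cube $Q$ one uses, with $\lambda=b_Q$, the algebraic identity
\[
T_b^m f=\sum_{h=0}^m\binom{m}{h}(-1)^h\,(b-\lambda)^{m-h}\,T\bigl((b-\lambda)^h f\bigr),
\]
applies the base pointwise sparse bound to each inner piece $T((b-b_Q)^h f)$, and carries out the stopping-time recursion simultaneously over all the pieces so that a single sparse family is obtained; the factor $|b(x)-b_Q|^{m-h}$ comes out pointwise while $\|(b-b_Q)^h f\|_{A,Q}$ remains inside the Orlicz average, which yields exactly $\mathcal{A}^{m,h}_{A,\mathcal{S}}(b,f)$ as defined above. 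The multisymbol estimate \eqref{eq:SparseSigma} is obtained in the same manner, expanding $T_{\bf b}f$ over the subsets $\sigma\subseteq\{1,\dots,m\}$ and keeping $|b-b_Q|_{\sigma'}$ outside and $|b-b_Q|_\sigma$ inside the average; here no integrability of the symbols beyond $b_i\in L^1_{\text{loc}}$ and $\||b|_\sigma\|_{A,Q}<\infty$ is needed, which are precisely the hypotheses listed.

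The only genuinely delicate point is the weak $(1,1)$ estimate for $\mathcal{M}_{T,\bar A}$ under the $\bar A$-H\"ormander condition --- that is, summing the kernel tails via the generalized H\"older inequality without losing the quantitative dependence on $H_{K,\bar A}$ --- together with performing the commutator iteration while keeping explicit track of the sparseness parameter so that the $\frac{1}{1-\varepsilon}$ dependence survives in the end. Both issues are handled in full detail in \cite{LORR,IFRR20,RR18}, to which we refer for the complete proofs.
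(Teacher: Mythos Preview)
Your reading of the situation is correct: in the paper this theorem is stated purely as a collection of known results, with citations to \cite{LORR,IFRR20,RR18}, and no proof is given. Your outline of the Lerner-type recursion via the grand maximal truncated operator $\mathcal{M}_{T,\bar A}$, the local oscillation estimate using $H_{K,\bar A}$, and the algebraic expansion of $T_b^m$ and $T_{\bf b}$ against the constants $b_Q$ is an accurate summary of the mechanism behind those references, and matches what the paper later carries out in full generality in Theorem~\ref{thm:ThmSparseComm} for spaces of homogeneous type.

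One point to flag: your interpretation of the first bullet is off. The inequality there has $T_b^m$ on the left but the right-hand side $\mathcal{A}_{A,\mathcal{S}_j}f$ involves neither $b$ nor any enlarged Young function, so it cannot be a commutator bound with the $BMO$ logarithms ``absorbed'' into $A$ as you suggest; the left-hand side is almost certainly a typo for $|Tf(x)|$ (the base operator, $m=0$), consistent with the citation to \cite{LORR,IFRR20} and with the second and third bullets handling the genuine commutator cases. Your proposed mechanism for item one---enlarging $A$ by $\log^m$ factors to swallow the BMO oscillation---would produce a different sparse form than the one displayed, so that paragraph should simply describe the sparse domination of $T$ itself.
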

Observe that the required bilinear estimates \eqref{bilinearsparseT}, \eqref{bilinearsparseTb} and \eqref{bilinearsparsecomm}  for the main results hold since if $G$ is any of the operators above, then
\[\left|\int_{\mathbb{R}^n}Gfg\right|=\left|\int_{\mathbb{R}^n}fG^*g\right|\]
and the same sparse bounds for $G$ hold as well for $G^*$ and then  \eqref{bilinearsparseT}, \eqref{bilinearsparseTb} and \eqref{bilinearsparsecomm} readily follow. Hence the main results allow us to derive the corresponding estimates for the aforementioned operators.
\begin{remark}Note that in the case of $\bar{A}$-Hörmander operators the estimates above
appeared first in \cite{IFRR20} under some additional technical assumption
on $A$. However such an assumption can be dropped, as it was shown in \cite{LO} for $T$. In the case of commutators \eqref{eq:SparseSigma}
appeared first in \cite{RR18} under some additional technical condition
on $A$. Later on in \cite[Theorem 3.5]{HLO}, some particular cases of that result were recovered. It is possible as well, to provide the quantitative version in terms of the sparseness constant following ideas in \cite{HytNotes}. In the next subsection we will provide such a result in the realm of spaces of homogeneous type.
\end{remark}

\begin{remark}
In view of the aforementioned sparse domination results, it is worth noting that results obtained here further extend results in \cite{CRR20} and also allow to recover, for instance, results for $T_{\bf b}$, with $T$ being a Calder\'on-Zygmund operator, recently obtained in \cite{BCP21}. 
\end{remark}

\subsection{A sparse domination result for $T_{\bf b}$ commutators in spaces of homogeneous type and applications}
In this subsection we provide a full argument, extending \eqref{eq:SparseSigma}
to spaces of homogeneous type, which contains ${\bf b}=\emptyset$
as a particular case, and allows us to drop the aforementioned technical
condition in \cite{IFRR20}. Note that this result also extends the commutator bound from \cite{DGKLW}. We remit the reader to the latter mentioned paper and to \cite{L21} and the references therein for some further insight on sparse domination on spaces of homogeneous type. We will also show how to apply our main results to $A$-H\"ormander operators and their commutators.

We begin recalling that given $C$ a submultiplicative Young function
\[
\mu\left(\left\{ x\in X\,:\,M_{C}f(x)>t\right\} \right)\leq\|M_{C}\|\int_{X}C\left(\frac{|f|}{t}\right)d\mu
\]
The proof of this fact can be obtained relying upon covering by dyadic structures and then using the same argument as in, for instance, \cite[Lemma 2.6]{LORR}.

We recall as well that given $\alpha>0$ we can define the operator $\mathcal{M}_{T,\alpha}^{\#}$, that was introduced in \cite{LO} by
\[
\mathcal{M}_{T,\alpha}^{\#}f(x)=\sup_{B\ni x}\underset{y,z\in B}{\esssup}|T(f\chi_{X\setminus\alpha B})(y)-T(f\chi_{X\setminus\alpha B})(z)|.
\]
The statement of the sparse domination result we intend to prove is the following. 
\begin{teo}
\label{thm:ThmSparseComm}Let $(X,d,\mu)$ be a space of homogeneous
type and $\mathcal{D}$ a dyadic system with parameters $c_{0}$,
$C_{0}$ and $\delta$. Let us fix $\alpha\geq\frac{3c_{d}^{2}}{\delta}$
and let $f:X\rightarrow\mathbb{R}$ be a boundedly supported function
such that $f\in L^{\infty}(X)$. Assume as well that $b_{1},\dots,b_{m}\in L^{\infty}.$
Let $\text{\ensuremath{A}}$ and $B$ be submultiplicative Young functions and $C=\max(A,B)$,
and assume that there exist non-increasing functions $\psi$ and $\phi$
such that for every $Q\in\mathcal{D}$ and any supported function
$g\in L^{\infty}(X)$ 
\[
\mu\left(\{x\in Q:|T(g\chi_{Q})(x)|>\psi(\rho)\|g\|_{A,Q}\}\right)\le\rho\mu(Q)\quad(0<\rho<1)
\]
and 
\[
\mu\left(\{x\in Q:\mathcal{M}_{T,\alpha}^{\#}(g\chi_{Q})(x)>\phi(\rho)\|g\|_{B,Q}\}\right)\le\rho\mu(Q)\quad(0<\rho<1)
\]
Then, given $\varepsilon\in(0,1)$, there exists a $(1-\varepsilon)$-sparse
family $\mathcal{S}\subset\mathcal{D}$ such that 
\[
|T_{{\bf b}}f(x)|\lesssim\kappa_{\xi,\varepsilon,C}\sum_{h=0}^{m}\sum_{\sigma\in C_{h}({\bf b})}\sum_{Q\in\mathcal{S}}\||b-b_{\alpha Q}|_{\sigma}f\|_{C,\alpha Q}|b-b_{\alpha Q}|_{\sigma'}\chi_{Q}(x)
\]
where 
\[
\kappa_{\xi,\varepsilon,C}=\left(2\xi\left(\frac{\varepsilon}{3c_{1}c_{2}}\right)+\xi\left(\frac{1}{3c_{1}c_{2}}\right)\frac{\varepsilon}{3c_{1}c_{2}}\|M_{C}\|\right)
\]
and $\xi(\rho)=\psi(\rho)+\phi(\rho)$ with $c_{1}$ and $c_{2}$
being constants depending on the parameters defining $\mathcal{D}$.
Furthermore, there exist $0<c_{0}\leq C_{0}<\infty$, $0<\delta<1$,
$\gamma\geq1$ and $k\in\mathbb{N}$ such that there are dyadic systems
$\mathcal{D}_{1},\dots,\mathcal{D}_{k}$ with parameters $c_{0},C_{0}$
and $\delta$ and $k$ $(1-\varepsilon)$-sparse families $\mathcal{S}_{i}\subset\mathcal{D}_{i}$
such that 
\[
|T_{{\bf b}}f(x)|\lesssim\kappa_{\xi,\rho,C}\sum_{h=0}^{m}\sum_{\sigma\in C_{h}({\bf b})}\sum_{Q\in\mathcal{S}}\sum_{j=1}^{k}\sum_{Q\in\mathcal{S}_{j}}\||b-b_{Q}|_{\sigma}f\|_{C,Q}|b-b_{Q}|_{\sigma'}\chi_{Q}(x).
\]
\end{teo}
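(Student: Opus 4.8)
The plan is to run Lerner's recursive sparse domination scheme, in the form used for iterated commutators in \cite{LORR,IFRR20,HytNotes}, adapted to the quasi-metric setting through the dyadic systems of Proposition~\ref{proposition:dyadicsystem} and the covering Lemma~\ref{lemma:covering}. \emph{Step 1: reduction to a single dyadic system and a top cube.} Since $T_{{\bf b}}$ is linear in $f$ and $f$ is boundedly supported, by Lemma~\ref{lemma:covering} (with $\alpha\ge 3c_{d}^{2}/\delta$; trivial if $\operatorname{diam}X<\infty$) there is a partition $\mathcal{P}\subset\mathcal{D}$ of $X$ with $\operatorname{supp}f\subseteq\alpha Q_{0}$ for every $Q_{0}\in\mathcal{P}$. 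It thus suffices to construct, for each such $Q_{0}$, a $(1-\varepsilon)$-sparse family $\mathcal{S}(Q_{0})\subset\mathcal{D}(Q_{0})$ with the asserted bound on $Q_{0}$, and then set $\mathcal{S}=\bigcup_{Q_{0}\in\mathcal{P}}\mathcal{S}(Q_{0})$. Once the first conclusion is established for $\mathcal{D}$, the last assertion follows from Proposition~\ref{proposition:dyadicsystem}: each ball $\alpha Q$ with $Q\in\mathcal{S}$ lies in a cube of one of finitely many auxiliary dyadic systems $\mathcal{D}_{1},\dots,\mathcal{D}_{k}$ of comparable measure, so a routine redistribution of the masses---at the cost of a harmless worsening of the sparseness constant---produces sparse families $\mathcal{S}_{j}\subset\mathcal{D}_{j}$ and replaces the $\alpha Q$-averages by genuine $Q$-averages.

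\emph{Step 2: commutator expansion with movable centers and the stopping time.} For $x\in Q_{0}$ one has the pointwise identity
\[T_{{\bf b}}f(x)=\sum_{h=0}^{m}\sum_{\sigma\in C_{h}({\bf b})}c_{\sigma}\,(b(x)-b_{\alpha Q_{0}})_{\sigma'}\,T\big((b-b_{\alpha Q_{0}})_{\sigma}f\big)(x),\]
obtained by iterating $[b_{i},\cdot]$ and inserting the constants $b_{i,\alpha Q_{0}}$; it is legitimate since $f,b_{1},\dots,b_{m}\in L^{\infty}$ are boundedly supported. Fix a small $\rho\in(0,1)$ and, writing $g_{\sigma}=(b-b_{\alpha Q_{0}})_{\sigma}f$, let $\mathcal{E}\subseteq Q_{0}$ be the union over $\sigma$ of the Calder\'on--Zygmund set where $M_{C}(g_{\sigma}\chi_{\alpha Q_{0}})>\Lambda\|g_{\sigma}\|_{C,\alpha Q_{0}}$, of $\{|T(g_{\sigma}\chi_{\alpha Q_{0}})|>\psi(\rho)\|g_{\sigma}\|_{A,\alpha Q_{0}}\}$, and of $\{\mathcal{M}_{T,\alpha}^{\#}(g_{\sigma}\chi_{\alpha Q_{0}})>\phi(\rho)\|g_{\sigma}\|_{B,\alpha Q_{0}}\}$. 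The weak-type bound for $M_{C}$ recalled above controls the first sets by $\|M_{C}\|\mu(\alpha Q_{0})/\Lambda$; applying the two hypotheses of the theorem on a $\mathcal{D}$-cube that contains $\alpha Q_{0}$ and has comparable measure (this is where the constants $c_{1},c_{2}$, and more generally the replacement of $Q$ by $\alpha Q$, enter) controls the other two by $c\rho\,\mu(Q_{0})$. Choosing $\Lambda$ and $\rho$ in terms of $\varepsilon$, $2^{m}$ and $c_{1},c_{2}$ gives $\mu(\mathcal{E})\le\varepsilon\mu(Q_{0})$; taking $\{P_{j}\}\subset\mathcal{D}(Q_{0})$ to be the maximal dyadic cubes on which one of these conditions forces the descent yields $\sum_{j}\mu(P_{j})\le\varepsilon\mu(Q_{0})$, after which we add $Q_{0}$ to $\mathcal{S}(Q_{0})$ and iterate the construction inside every $P_{j}$ with $b_{\alpha Q_{0}}$ replaced by $b_{\alpha P_{j}}$.

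\emph{Step 3: the good part and the recursion.} For a.e.\ $x\in Q_{0}\setminus\bigcup_{j}P_{j}$ the failure of the $\mathcal{M}_{T,\alpha}^{\#}$-condition lets one replace $T(g_{\sigma}\chi_{\alpha Q_{0}})(x)$ by an average up to $\phi(\rho)\|g_{\sigma}\|_{B,\alpha Q_{0}}$, and then the failure of the $|T|$-condition together with the Lebesgue differentiation theorem yields $|T(g_{\sigma}\chi_{\alpha Q_{0}})(x)|\lesssim\xi(\rho)\|g_{\sigma}\|_{C,\alpha Q_{0}}$ with $\xi=\psi+\phi$ (using $C\ge A,B$ to pass to the $C$-norm); hence the $h,\sigma$ term of the expansion is $\lesssim\xi(\rho)\|g_{\sigma}\|_{C,\alpha Q_{0}}\,|b(x)-b_{\alpha Q_{0}}|_{\sigma'}$, exactly the $Q=Q_{0}$ contribution to the target sparse sum. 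For $x\in P_{j}$ one re-centers both the inner factor $(b-b_{\alpha Q_{0}})_{\sigma}$ and the outer factor $(b(x)-b_{\alpha Q_{0}})_{\sigma'}$ at $b_{\alpha P_{j}}$: the principal piece has the same form as $T_{{\bf b}}$ localized to $\alpha P_{j}$ and is handled by the recursion, the tail $T(g_{\sigma}\chi_{\alpha Q_{0}\setminus\alpha P_{j}})$ is absorbed through the $\mathcal{M}_{T,\alpha}^{\#}$-estimate evaluated at a good point of the parent of $P_{j}$, and the cross-terms carrying constant factors $|b_{\alpha P_{j}}-b_{\alpha Q_{0}}|$ are attributed to the already-selected ancestor cubes. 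Summing the geometric series over the stopping generations---this is where $\sum_{j}\mu(P_{j})\le\varepsilon\mu(Q_{0})$ is used---gives the bound with the constant $\kappa_{\xi,\varepsilon,C}$ of the statement, and distributing the $\alpha Q$-balls over the auxiliary grids as in Step~1 produces the final multigrid form.

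\emph{Main obstacle.} The delicate part is the commutator bookkeeping in Step~3: one must simultaneously keep the localization errors under control, i.e.\ bound $T\big((b-b_{\alpha Q_{0}})_{\sigma}f\,\chi_{\alpha Q_{0}\setminus\alpha P_{j}}\big)$ by $\mathcal{M}_{T,\alpha}^{\#}$ (which is precisely what forces $\alpha$ large enough that $\alpha P_{j}\supseteq P_{j}$ and $\alpha P_{j}$ is comparable to $\alpha\cdot(1P_{j})$), and reorganize, without destroying the sparse structure, the cross-terms produced when the symbol averages $b_{\alpha Q}$ are moved from scale to scale along the stopping tree. Both tasks interact with the quasi-metric enlargements ($c_{d}$-dilations, fitting the balls $\alpha Q$ into cubes of the auxiliary dyadic systems) that are responsible for the constants $c_{1},c_{2}$ and for the appearance of $\alpha Q$ rather than $Q$ in the averages; getting all of these to line up is the real work.
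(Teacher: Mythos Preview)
Your overall plan—Lerner-type recursion together with the P\'erez--Trujillo-Gonz\'alez expansion—is the right one, but the way you organize Step~3 introduces a complication that the paper avoids entirely, and your resolution of it (``cross-terms carrying constant factors $|b_{\alpha P_{j}}-b_{\alpha Q_{0}}|$ are attributed to the already-selected ancestor cubes'') is not justified. The clean device, which is the content of Lemma~\ref{lem:It-1}, is to telescope at the level of the \emph{full} commutator:
\[
T_{{\bf b}}(f\chi_{\alpha Q_{0}})\chi_{Q_{0}}
=T_{{\bf b}}(f\chi_{\alpha Q_{0}})\chi_{Q_{0}\setminus\cup P_{j}}
+\sum_{j}T_{{\bf b}}(f\chi_{\alpha Q_{0}\setminus\alpha P_{j}})\chi_{P_{j}}
+\sum_{j}T_{{\bf b}}(f\chi_{\alpha P_{j}})\chi_{P_{j}}.
\]
The last sum is already the recursion piece; no re-centering is needed. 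The first two pieces are then expanded via $T_{{\bf b}}g=\sum_{\sigma}\pm(b-c_{Q_{0}})_{\sigma}T((b-c_{Q_{0}})_{\sigma'}g)$ with a \emph{free} constant $c_{Q_{0}}$, the same one in both, and only at the end is $c_{Q_{0}}$ set equal to $b_{\alpha Q_{0}}$ (or to $b_{Q'}$ for the multigrid statement). Because the identity holds for any constant, the next level simply uses a fresh $c_{P_{j}}$ and there are no cross-terms at all. Your term-by-term re-centering, by contrast, manufactures constants $|b_{\alpha P_{j}}-b_{\alpha Q_{0}}|$ for which the hypotheses give no control (the symbols are merely $L^{\infty}$, not $BMO$), and the target sparse form has no slot to absorb them.

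A second, smaller point: your stopping family is described as the maximal dyadic cubes \emph{contained in} $\mathcal{E}$, and you then look for a good point in the \emph{parent} of $P_{j}$. But the $\mathcal{M}_{T,\alpha}^{\#}$ comparison needs $x$ and the good point to lie in a common ball whose $\alpha$-dilate is exactly $\alpha P_{j}$; going to the parent forces a larger ball and the localization $\chi_{\alpha Q_{0}\setminus\alpha P_{j}}$ no longer matches. The paper instead takes the Calder\'on--Zygmund cubes of $\Omega_{c_{2}}=\{M^{\mathcal{D}(Q_{0})}(\chi_{\mathcal{E}})>1/c_{2}\}$, which satisfy $\mu(P_{j}\cap\mathcal{E})\le\tfrac12\mu(P_{j})$; this guarantees a good point $x'\in P_{j}\setminus\mathcal{E}$, and with it the oscillation estimate for $T((b-c_{Q_0})_{\sigma'}f\chi_{\alpha Q_{0}\setminus\alpha P_{j}})$ goes through with the ball $1P_{j}$ and the correct cut-off.
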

Before settling this result let us show how to provide applications from it. Observe that if an operator $G$ satisfies the bound
\[\mu\left(\{x\in Q:|Gf(x)|>\lambda\}\right)\le{C_G}\int_{X}A\left(\frac{|f|}{\lambda}\right)\]
then we have that if $\varphi(\rho)\geq1$ then
\[\begin{split}
\mu\left(\{x\in Q:|G(g\chi_Q)(x)|>\varphi(\rho)\|g\|_{A,Q}\}\right)&\le{C_G}\int_{X}A\left(\frac{|g\chi_Q|}{\varphi(\rho)\|g\|_{A,Q}}\right)\\ &\le{C_G}\frac{1}{\varphi(\rho)} \int_{Q}A\left(\frac{|g|}{\|g\|_{A,Q}}\right)\\
&\le{C_G}\frac{1}{\varphi(\rho)}\mu(Q). 
\end{split}\]
Hence we have that choosing, $\varphi(t)=\frac{C_G}{t}$, since $\rho\in(0,1)$, then
\[\mu\left(\{x\in Q:|G(g\chi_Q)(x)|>\varphi(\rho)\|f\|_{A,Q}\}\right)\le\rho\mu(Q). \]
Observe that this would be a suitable choice for $\varphi$ in order to apply Theorem \ref{thm:ThmSparseComm}. Furthermore, note that for this choice of $\varphi$, then
\[\begin{split}
2\varphi\left(\frac{\varepsilon}{3c_{1}c_{2}}\right)+\varphi\left(\frac{1}{3c_{1}}\right)\frac{\varepsilon}{3c_{1}c_{2}}\|M_{C}\|&=2\frac{C_G}{\frac{\varepsilon}{3c_{1}c_{2}}}+\frac{C_G}{\frac{1}{3c_{1}}}\frac{\varepsilon}{3c_{1}c_{2}}\|M_{C}\|\\
&\leq4\frac{c_1c_2C_G\|M_C\|}{\varepsilon}
\end{split}
\]
Taking these ideas into account we have the following corollary

\begin{cor}
Let $(X,d,\mu)$ be a space of homogeneous
type and $\mathcal{D}$ a dyadic system with parameters $c_{0}$,
$C_{0}$ and $\delta$. Let us fix $\alpha\geq\frac{3c_{d}^{2}}{\delta}$
and let $f:X\rightarrow\mathbb{R}$ be a boundedly supported function
such that $f\in L^{\infty}(X)$. Assume as well that $b_{1},\dots,b_{m}\in L^{\infty}.$
Let $\text{\ensuremath{A}}$ and $B$ be Young functions and $C=\max(A,B)$,
and assume that there exist non-increasing functions $\psi$ and $\phi$
such that for every $Q\in\mathcal{D}$ and any boundedly supported function
$g\in L^{\infty}(X)$ 
\begin{equation}\label{eq:weak}
\mu\left(\{x\in Q:|Tg(x)|>\lambda\}\right)\le\|T\|\int_{X}A\left(\frac{|f|}{\lambda}\right)
\end{equation}
and 
\begin{equation}\label{eq:sharp}
\mu\left(\{x\in Q:\mathcal{M}_{T,\alpha}^{\#}(g)>\lambda\}\right)\le\|\mathcal{M}_{T,\alpha}^{\#}\|\int_{X}B\left(\frac{|f|}{\lambda}\right)
\end{equation}
Then, given $\varepsilon\in(0,1)$, there exists a $\varepsilon$-sparse
family $\mathcal{S}\subset\mathcal{D}$ such that 
\[
|T_{{\bf b}}f(x)|\lesssim\kappa\sum_{h=0}^{m}\sum_{\sigma\in C_{h}({\bf b})}\sum_{Q\in\mathcal{S}}\||b-b_{\alpha Q}|_{\sigma}f\|_{C,\alpha Q}|b-b_{\alpha Q}|_{\sigma'}\chi_{Q}(x)
\]
where 
\[
\kappa=\frac{c_1c_2\left(\|T\|+\|\mathcal{M}_{T,\alpha}^{\#}\|\right)\|M_C\|}{1-\varepsilon}
\]
and $\xi(\rho)=\psi(\rho)+\phi(\rho)$ with $c_{1}$ and $c_{2}$
being constants depending on the parameters defining $\mathcal{D}$.
Furthermore, there exist $0<c_{0}\leq C_{0}<\infty$, $0<\delta<1$,
$\gamma\geq1$ and $k\in\mathbb{N}$ such that there are dyadic systems
$\mathcal{D}_{1},\dots,\mathcal{D}_{k}$ with parameters $c_{0},C_{0}$
and $\delta$ and $k$ $(1-\varepsilon)$-sparse families $\mathcal{S}_{i}\subset\mathcal{D}_{i}$
such that 
\[
|T_{{\bf b}}f(x)|\lesssim\kappa\sum_{h=0}^{m}\sum_{\sigma\in C_{h}({\bf b})}\sum_{Q\in\mathcal{S}}\sum_{j=1}^{k}\sum_{Q\in\mathcal{S}_{j}}\||b-b_{Q}|_{\sigma}f\|_{C,Q}|b-b_{Q}|_{\sigma'}\chi_{Q}(x).
\]
\end{cor}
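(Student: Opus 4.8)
The plan is to deduce this corollary directly from Theorem~\ref{thm:ThmSparseComm}: the only thing to do is to check that the weak type bounds \eqref{eq:weak} and \eqref{eq:sharp} furnish, after a suitable choice of the gauge functions $\psi$ and $\phi$, the localized distributional hypotheses demanded there, and then to keep track of the resulting constant. This is exactly the computation already carried out in the discussion preceding the statement, specialized to $G=T$ and $G=\mathcal{M}_{T,\alpha}^{\#}$, so the proof amounts to putting those observations together.

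Concretely, fix $Q\in\mathcal{D}$ and a boundedly supported $g\in L^{\infty}(X)$ and apply \eqref{eq:weak} to $g\chi_{Q}$ with threshold $\psi(\rho)\|g\|_{A,Q}$. Using that $A$ is a Young function, so that $A(t/\nu)\le\nu^{-1}A(t)$ whenever $\nu\ge1$ (convexity plus $A(0)=0$), together with the defining property $\frac{1}{\mu(Q)}\int_{Q}A(|g|/\|g\|_{A,Q})\,d\mu\le1$ of the Luxemburg average, one obtains
\[
\mu\!\left(\{x\in Q:|T(g\chi_{Q})(x)|>\psi(\rho)\|g\|_{A,Q}\}\right)\le\|T\|\int_{Q}A\!\left(\frac{|g|}{\psi(\rho)\|g\|_{A,Q}}\right)d\mu\le\frac{\|T\|}{\psi(\rho)}\,\mu(Q),
\]
provided $\psi(\rho)\ge1$. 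Hence the choice $\psi(\rho)=\|T\|/\rho$, which is non-increasing and (after replacing $\|T\|$ by $\max(1,\|T\|)$ if necessary) is $\ge1$ on $(0,1)$, gives precisely $\le\rho\mu(Q)$. The identical argument applied to \eqref{eq:sharp}, now with the Young function $B$, yields the second hypothesis of Theorem~\ref{thm:ThmSparseComm} with $\phi(\rho)=\|\mathcal{M}_{T,\alpha}^{\#}\|/\rho$; both bounds hold for every $Q$ and every boundedly supported $g\in L^{\infty}(X)$, as required.

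With these choices $\xi(\rho)=\psi(\rho)+\phi(\rho)=(\|T\|+\|\mathcal{M}_{T,\alpha}^{\#}\|)/\rho$. Applying Theorem~\ref{thm:ThmSparseComm} with $1-\varepsilon$ in place of $\varepsilon$ (so that the family it produces is $\varepsilon$-sparse) gives the asserted pointwise domination with constant
\[
\kappa_{\xi,1-\varepsilon,C}=2\xi\!\left(\frac{1-\varepsilon}{3c_{1}c_{2}}\right)+\xi\!\left(\frac{1}{3c_{1}c_{2}}\right)\frac{1-\varepsilon}{3c_{1}c_{2}}\|M_{C}\|=\frac{6c_{1}c_{2}\bigl(\|T\|+\|\mathcal{M}_{T,\alpha}^{\#}\|\bigr)}{1-\varepsilon}+(1-\varepsilon)\bigl(\|T\|+\|\mathcal{M}_{T,\alpha}^{\#}\|\bigr)\|M_{C}\|.
\]
Since $c_{1},c_{2}\ge1$, $\varepsilon\in(0,1)$ and $\|M_{C}\|\ge1$, the second summand is absorbed into the first up to an absolute constant, so $\kappa_{\xi,1-\varepsilon,C}\lesssim\frac{c_{1}c_{2}(\|T\|+\|\mathcal{M}_{T,\alpha}^{\#}\|)\|M_{C}\|}{1-\varepsilon}$, which is the claimed $\kappa$. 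The second, lattice-free, conclusion then follows from the first by covering each ball $B(s,\rho)$ by cubes from finitely many adjacent dyadic systems $\mathcal{D}_{1},\dots,\mathcal{D}_{k}$ via Proposition~\ref{proposition:dyadicsystem}, exactly as in the second part of Theorem~\ref{thm:ThmSparseComm}.

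There is no genuine obstacle in this argument: all the analytic weight of the statement sits in Theorem~\ref{thm:ThmSparseComm}, and what remains is bookkeeping. The only points that require a little attention are (i) ensuring $\psi(\rho),\phi(\rho)\ge1$ on $(0,1)$, hence the harmless normalization $\|T\|,\|\mathcal{M}_{T,\alpha}^{\#}\|\ge1$; (ii) checking that the chosen gauges are non-increasing, as Theorem~\ref{thm:ThmSparseComm} requires; and (iii) reconciling the $\varepsilon$ versus $(1-\varepsilon)$ sparseness conventions while absorbing the lower-order additive term in $\kappa_{\xi,1-\varepsilon,C}$.
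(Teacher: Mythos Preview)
Your proof is correct and follows essentially the same approach as the paper: the corollary is derived directly from Theorem~\ref{thm:ThmSparseComm} by choosing $\psi(\rho)=\|T\|/\rho$ and $\phi(\rho)=\|\mathcal{M}_{T,\alpha}^{\#}\|/\rho$, verifying via convexity of the Young functions that the localized distributional hypotheses hold, and then tracking the constant. Your treatment is slightly more explicit than the paper's in handling the $\varepsilon$ versus $1-\varepsilon$ sparseness swap and the normalization $\|T\|,\|\mathcal{M}_{T,\alpha}^{\#}\|\ge1$, but these are exactly the bookkeeping details the paper leaves implicit in the discussion preceding the corollary.
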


If $T$ is an $\bar{\Psi}$-H\"ormander operator, then we have as well that it is not hard to check that 
\eqref{eq:weak} holds for $A(t):=t$ and \eqref{eq:sharp} holds for the Young function $B(t):=\Psi(t)$. Hence, the sparse control in the corollary above holds. Note that since
\[\left|\int_XT_{{\bf b}}f(x)g(x)d\mu(x)\right|=\left|\int_XT^*_{{\bf b}}g(x)f(x)d\mu(x)\right|\] 
and $T^*$ is as well an $\bar{\Psi}$-H\"ormander operator, then
\[\left|\int_XT_{{\bf b}}f(x)g(x)d\mu(x)\right|\lesssim\frac{1}{1-\varepsilon}\sum_{h=0}^{m}\sum_{\sigma\in C_{h}({\bf b})}\sum_{j=1}^{k}\sum_{Q\in\mathcal{S}_{j}}\||b-b_{Q}|_{\sigma}g\|_{\Psi,Q}\int_{Q}|f||b-b_{Q}|_{\sigma'}.\]
This yields that if $T$ is an $\bar{\Psi}$-H\"ormander operator, then $T$ itself and its commutators $T_{{\bf b}}$ satisfy the sparse domination conditions \eqref{bilinearsparseTETH} and \eqref{bilinearsparseCommETH} respectively, and hence the estimates obtained in Theorems \ref{thm:Teth}  and  \ref{thm:bTeth} hold as well for $T$ and $T_{{\bf b}}$.

\begin{remark}
The results that we have just mentioned in the preceding lines extend some results from \cite{CRR20} and the  estimate for $T_{\bf b}$, with $T$ being a Calder\'on-Zygmund operator, recently obtained in \cite{BCP21} to spaces of homogeneous type.
\end{remark}

\subsubsection{Proof of Theorem \ref{thm:ThmSparseComm}}
To settle Theorem \ref{thm:ThmSparseComm} we need a few Lemmas. 
The first of them is an adaption of ideas in \cite{LN} and was settled in \cite{DGKLW} and relates sparse and Carleson families.
\begin{lemma}
Let $\mathcal{D}$ be a dyadic system. If $\mathcal{S}\subset\mathcal{D}$ is a $\Lambda$-Carleson with $\Lambda>1$, then it is a $\frac{1}{\Lambda}$-sparse family. Conversely, if $\mathcal{S}$ is a $\varepsilon$-sparse family with $\varepsilon\in(0,1)$ then $\mathcal{S}$ is a $\frac{1}{\varepsilon}$-Carleson family.
\end{lemma}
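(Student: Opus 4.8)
The plan is to establish the two implications separately; the ``sparse $\Rightarrow$ Carleson'' direction is essentially immediate, while ``Carleson $\Rightarrow$ sparse'' calls for a greedy construction on the dyadic tree. Recall that $\mathcal{S}\subset\mathcal{D}$ is $\Lambda$-Carleson if $\sum_{P\in\mathcal{S},\,P\subseteq Q}\mu(P)\le\Lambda\mu(Q)$ for every $Q\in\mathcal{D}$, and that $\mathcal{S}$ is $\varepsilon$-sparse if there are pairwise disjoint measurable sets $E_Q\subseteq Q$, $Q\in\mathcal{S}$, with $\mu(E_Q)\ge\varepsilon\mu(Q)$.

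\emph{Sparse $\Rightarrow$ Carleson.} First I would dispatch this direction. If $\mathcal{S}$ is $\varepsilon$-sparse, fix $Q\in\mathcal{D}$; since the sets $E_P$ with $P\in\mathcal{S}$, $P\subseteq Q$ are pairwise disjoint and contained in $Q$,
\[
\sum_{P\in\mathcal{S},\,P\subseteq Q}\mu(P)\le\frac1\varepsilon\sum_{P\in\mathcal{S},\,P\subseteq Q}\mu(E_P)=\frac1\varepsilon\,\mu\Big(\bigcup_{P\in\mathcal{S},\,P\subseteq Q}E_P\Big)\le\frac1\varepsilon\mu(Q),
\]
so $\mathcal{S}$ is $\tfrac1\varepsilon$-Carleson.

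\emph{Carleson $\Rightarrow$ sparse.} For the main implication I would first reduce to the case $\mathcal{S}$ finite: if every finite subfamily of $\mathcal{S}$ is $\tfrac1\Lambda$-sparse, then so is $\mathcal{S}$, by exhausting $\mathcal{S}$ by an increasing sequence of finite subfamilies and extracting, via a diagonal/weak-$*$ compactness argument on the indicators $\chi_{E_Q}$, a coherent limiting choice of the $E_Q$. So assume $\mathcal{S}=\{Q_1,\dots,Q_N\}$ is finite, enumerated so that descendants precede ancestors, i.e. $Q_i\supsetneq Q_j$ implies $i>j$ (a linear extension of the finite poset $(\mathcal{S},\subseteq)$ with minimal elements first). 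I would define $E_{Q_i}$ inductively, maintaining that $E_{Q_1},\dots,E_{Q_i}$ are pairwise disjoint, $E_{Q_\ell}\subseteq Q_\ell$, and $\mu(E_{Q_\ell})=\tfrac1\Lambda\mu(Q_\ell)$. At step $i$ put $Q=Q_i$ and $U=\bigcup\{E_{Q_\ell}:\ell<i,\ Q_\ell\subsetneq Q\}$; by the dyadic structure, any previously chosen $E_{Q_\ell}$ meeting $Q$ must satisfy $Q_\ell\subsetneq Q$, and conversely every $P\in\mathcal{S}$ with $P\subsetneq Q$ has already been processed, so
\[
\mu(U)=\sum_{\substack{\ell<i\\ Q_\ell\subsetneq Q}}\mu(E_{Q_\ell})=\frac1\Lambda\sum_{\substack{P\in\mathcal{S}\\ P\subsetneq Q}}\mu(P)\le\frac1\Lambda\big(\Lambda\mu(Q)-\mu(Q)\big)=\Big(1-\frac1\Lambda\Big)\mu(Q),
\]
where the inequality is the $\Lambda$-Carleson condition at $Q$. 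Hence $\mu(Q\setminus U)\ge\tfrac1\Lambda\mu(Q)$ and I may choose $E_Q\subseteq Q\setminus U$ with $\mu(E_Q)=\tfrac1\Lambda\mu(Q)$, preserving the invariant. After $N$ steps, $\{E_Q\}_{Q\in\mathcal{S}}$ witnesses $\tfrac1\Lambda$-sparseness.

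\emph{Where the difficulty lies.} Two points deserve care. The step ``pick $E_Q\subseteq Q\setminus U$ of measure \emph{exactly} $\tfrac1\Lambda\mu(Q)$'' uses that the measures involved carry no obstructing atoms; in general one runs the argument on $(X\times[0,1],\mu\otimes\mathrm{Leb})$ (splitting atoms) and projects back, or simply notes the measures arising are nonatomic in the case of interest. The more substantive issue is the passage from finite to arbitrary $\mathcal{S}$, i.e. making the finite-stage choices compatible in the limit; this compactness step is exactly the content of the argument of \cite{LN}, on which \cite{DGKLW} builds. Alternatively, one may phrase the finite case as a transportation problem on the dyadic tree and invoke a Hall/max-flow–min-cut theorem, the $\Lambda$-Carleson packing bounds being precisely the feasibility conditions — but the greedy bottom-up construction above makes the Hall machinery unnecessary here.
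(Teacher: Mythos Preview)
The paper does not supply its own proof of this lemma; it merely states it and attributes it to \cite{LN} and \cite{DGKLW}. Your argument is exactly the standard one from those sources: the sparse-to-Carleson direction is the immediate computation you give, and the Carleson-to-sparse direction runs the bottom-up greedy allocation on finite subfamilies, with the two delicate points (atoms, and the passage to infinite $\mathcal{S}$) correctly flagged and deferred to \cite{LN}. One small caution: the phrase ``diagonal/weak-$*$ compactness on the indicators $\chi_{E_Q}$'' is imprecise as written, since weak-$*$ limits of indicator functions need not be indicators; the actual argument in \cite{LN} augments the space to $X\times[0,1]$ and builds the sets $E_Q$ there directly rather than through a limiting procedure. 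Since you explicitly invoke \cite{LN} for precisely this step, the proposal is sound.
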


The second of them is contained in \cite[Section 6.3]{LN}. Although there it is stated in the euclidean setting the same proof works as well for dyadic systems in spaces of homogeneous type. 
\begin{lemma}\label{LemmaCarleson}
Let $\mathcal{D}$ be a dyadic system. If $\mathcal{S}\subset\mathcal{D}$
is a $\Lambda$-Carleson family and $t\geq2$ then $\mathcal{S}=\cup_{i=1}^{t}\mathcal{S}_{i}$
each $\mathcal{S}_{i}$ is a $1+\frac{\Lambda-1}{t}$-Carleson family. 
\end{lemma}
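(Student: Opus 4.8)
The plan is to obtain the splitting by \emph{colouring} the cubes of $\mathcal{S}$ with $t$ colours in a greedy, bottom-up manner, the Carleson constant of each colour class then being controlled by a one-line pigeonhole estimate. Recall that $\mathcal{S}\subset\mathcal{D}$ is $\Lambda$-Carleson if $\sum_{R\in\mathcal{S},\,R\subseteq Q}\mu(R)\le\Lambda\mu(Q)$ for every $Q\in\mathcal{D}$; since a cube $Q\in\mathcal{S}$ itself contributes $\mu(Q)$ to this sum, it follows that $\sum_{R\in\mathcal{S},\,R\subsetneq Q}\mu(R)\le(\Lambda-1)\mu(Q)$ whenever $Q\in\mathcal{S}$. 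The first step is a reduction: it suffices to produce a map $c\colon\mathcal{S}\to\{1,\dots,t\}$ such that, writing $\mathcal{S}_{i}=c^{-1}(i)$, one has
\[
\sum_{R\in\mathcal{S}_{i},\,R\subseteq Q}\mu(R)\le\Bigl(1+\tfrac{\Lambda-1}{t}\Bigr)\mu(Q)\qquad\text{for every }i\in\{1,\dots,t\}\text{ and every }Q\in\mathcal{S}_{i}.
\]
Indeed, given any $Q'\in\mathcal{D}$ and a colour $i$, let $\{P_{j}\}_{j}$ be the maximal cubes of $\mathcal{S}_{i}$ contained in $Q'$; by the nestedness of $\mathcal{D}$ these are pairwise disjoint, every $R\in\mathcal{S}_{i}$ with $R\subseteq Q'$ lies in exactly one $P_{j}$, and each $P_{j}\in\mathcal{S}_{i}$, so summing the displayed inequality over $j$ and using $\bigsqcup_{j}P_{j}\subseteq Q'$ gives $\sum_{R\in\mathcal{S}_{i},\,R\subseteq Q'}\mu(R)\le(1+\tfrac{\Lambda-1}{t})\mu(Q')$; that is, $\mathcal{S}_{i}$ is $(1+\tfrac{\Lambda-1}{t})$-Carleson.

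To construct $c$ I would first treat the case in which strict inclusion is a well-founded relation on $\mathcal{S}$ — in particular the case of finite $\mathcal{S}$, which is the one needed in the applications of this lemma — and define $c$ by well-founded recursion on $\subsetneq$. When a cube $Q\in\mathcal{S}$ is reached, all cubes of $\mathcal{S}$ strictly contained in $Q$ have already been coloured, so I set $m_{i}(Q)=\sum_{R\in\mathcal{S},\,R\subsetneq Q,\,c(R)=i}\mu(R)$ for $i\in\{1,\dots,t\}$ and take $c(Q)$ to be any colour attaining $\min_{i}m_{i}(Q)$. Since $\sum_{i=1}^{t}m_{i}(Q)=\sum_{R\in\mathcal{S},\,R\subsetneq Q}\mu(R)\le(\Lambda-1)\mu(Q)$ by the Carleson condition, the minimising colour obeys $m_{c(Q)}(Q)\le\tfrac{\Lambda-1}{t}\mu(Q)$; and since every $R\in\mathcal{S}$ with $R\subsetneq Q$ was coloured before $Q$, the set $\{R\in\mathcal{S}_{c(Q)}:R\subseteq Q\}$ consists of $Q$ together with exactly the cubes counted by $m_{c(Q)}(Q)$, whence
\[
\sum_{R\in\mathcal{S}_{c(Q)},\,R\subseteq Q}\mu(R)=\mu(Q)+m_{c(Q)}(Q)\le\Bigl(1+\tfrac{\Lambda-1}{t}\Bigr)\mu(Q),
\]
which is precisely the inequality demanded by the reduction.

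For an arbitrary (infinite) $\mathcal{S}$ strict inclusion need not be well-founded — $\mathcal{S}$ may contain an infinite strictly nested chain — so I would run the recursion on truncations and then glue by compactness. For $N\in\mathbb{Z}$ let $\mathcal{S}^{N}=\{Q\in\mathcal{S}:Q\in\mathcal{D}_{k}\text{ for some }k\le N\}$; each $\mathcal{S}^{N}$ is again $\Lambda$-Carleson, and strict inclusion \emph{is} well-founded on $\mathcal{S}^{N}$ because a cube of generation $k$ contains, by the doubling property, only finitely many cubes of each finer generation, hence only finitely many cubes of $\mathcal{S}^{N}$. The previous paragraph then yields colourings $c_{N}\colon\mathcal{S}^{N}\to\{1,\dots,t\}$ with the desired property on $\mathcal{S}^{N}$. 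Regarding each $c_{N}$ (extended arbitrarily off $\mathcal{S}^{N}$) as a point of the compact space $\{1,\dots,t\}^{\mathcal{S}}$ and passing to a cluster point $c$, one checks that $c$ works: for fixed $Q\in\mathcal{S}$ and along a subnet on which $c_{N}\to c$ coordinatewise, the summands $\mu(R)\mathbf{1}_{\{c_{N}(R)=c_{N}(Q)\}}\mathbf{1}_{\{R\in\mathcal{S}^{N}\}}$ converge to $\mu(R)\mathbf{1}_{\{c(R)=c(Q)\}}$ for every $R\in\mathcal{D}(Q)\cap\mathcal{S}$ and are dominated by $\mu(R)$, with $\sum_{R\in\mathcal{D}(Q)\cap\mathcal{S}}\mu(R)\le\Lambda\mu(Q)<\infty$, so dominated convergence gives $\sum_{R\in\mathcal{S}_{c(Q)},\,R\subseteq Q}\mu(R)=\lim_{N}\sum_{R\in\mathcal{S}^{N},\,R\subseteq Q,\,c_{N}(R)=c_{N}(Q)}\mu(R)\le(1+\tfrac{\Lambda-1}{t})\mu(Q)$. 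The combinatorial content of the argument is just the elementary pigeonhole bound $m_{c(Q)}(Q)\le\tfrac{1}{t}\sum_{i}m_{i}(Q)$; the only genuine obstacle is bookkeeping — organising the recursion and making the truncation/compactness limit precise — and it disappears entirely when $\mathcal{S}$ is finite, which is all that is used later.
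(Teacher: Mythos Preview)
Your argument is correct and is precisely the Lerner--Nazarov greedy colouring that the paper invokes by citing \cite[Section 6.3]{LN}; the paper does not supply its own proof here, only the reference, and your bottom-up assignment of $Q$ to the colour class of least accumulated sub-mass, together with the pigeonhole bound $m_{c(Q)}(Q)\le\frac{\Lambda-1}{t}\mu(Q)$, is exactly their mechanism. Your compactness pass to handle non--well-founded $\mathcal{S}$ is more explicit than what one finds in \cite{LN} (where the recursion is organised via $\mathcal{S}$-generations and the issue is treated lightly), but it is sound and, as you note, unnecessary for the finite families that actually arise in the applications.
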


The third and last lemma we need is the following one, which contains the key estimate to perform the iterative process that will allow us to settle Theorem \ref{thm:ThmSparseComm}. 

\begin{lemma}
\label{lem:It-1}Let $(X,d,\mu)$ be a space of homogeneous type and
$\mathcal{D}$ a dyadic system with parameters $c_{0}$, $C_{0}$
and $\delta$. Let us fix $\alpha\geq\frac{3c_{d}^{2}}{\delta}$ and
let $f$ be a boundedly supported
function such that $f\in L^\infty(X)$. Let $\text{\ensuremath{A}}$
and $B$ be Young functions and $C=\max(A,B)$, and assume that there
exist non-increasing functions $\psi$ and $\phi$ such that for every
$Q\in\mathcal{D}$\textup{, and every }boundedly supported function
$g\in L^{\infty}(X)$ 
\[
\mu\left(\{x\in Q:|T(g\chi_{Q})(x)|>\psi(\rho)\|g\|_{A,Q}\}\right)\le\rho\mu(Q)\quad(0<\rho<1)
\]
and 
\[
\mu\left(\{x\in Q:\mathcal{M}_{T,\alpha}^{\#}(g\chi_{Q})(x)>\phi(\rho)\|g\|_{B,Q}\}\right)\le\rho\mu(Q)\quad(0<\rho<1)
\]
Then, given $\varepsilon\in(0,1)$ there exist disjoint subcubes $Q_{j}\in\mathcal{D}(Q)$
such that 
\[
\sum_{j}\mu(Q_{j})\leq\varepsilon\mu(Q)
\]
and for every $\sigma\in C_{h}(\bf{b})$ and $h=0,\dots,m$, 
\begin{align*}
 & \left|T_{{\bf b}}(f\chi_{\alpha Q})(x)\chi_{Q}-\sum_{j}T_{{\bf b}}(f\chi_{\alpha P_{j}})(x)\chi_{P_{j}}(x)\right|\\
 & \leq\kappa_{\xi,\varepsilon,C}\sum_{h=0}^{m}\sum_{\sigma\in C_{h}({\bf b})}|b(x)-c_{Q}|_{\sigma}\|f|b-c_{Q}|_{\sigma'}\|_{C,\alpha Q}\chi_{Q}(x)
\end{align*}
where 
\[
\kappa_{\xi,\varepsilon,C}=2\xi\left(\frac{\varepsilon}{3c_{1}c_{2}}\right)+\xi\left(\frac{1}{3c_{1}}\right)\frac{\varepsilon}{3c_{1}c_{2}}\|M_{C}\|
\]
and $\xi(\rho)=\psi(\rho)+\phi(\rho)$ with $c_{1}$ and $c_{2}$
being constants depending on the parameters defining $\mathcal{D}$.
\end{lemma}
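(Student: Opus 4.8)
\emph{Strategy.} The plan is to run the Lerner--Ombrosi grand--maximal--truncation argument inside the lattice $\mathcal{D}$, combined with the algebraic expansion of the iterated commutator. Write $c_{Q}=\big((b_{1})_{\alpha Q},\dots,(b_{m})_{\alpha Q}\big)$ and, for $\sigma\subseteq\{1,\dots,m\}$, put $f_{\sigma}:=(b-c_{Q})_{\sigma}f$, so that $|f_{\sigma}|=|f|\,|b-c_{Q}|_{\sigma}$ and hence $\|f_{\sigma}\|_{C,\alpha Q}=\|f|b-c_{Q}|_{\sigma}\|_{C,\alpha Q}$. Since $[b_{i},T]=[b_{i}-\lambda,T]$ for every constant $\lambda$, iterating gives the constant--free identity
\[
T_{{\bf b}}(g)(x)=\sum_{h=0}^{m}\sum_{\sigma\in C_{h}({\bf b})}(-1)^{m-h}\,(b(x)-c_{Q})_{\sigma}\,T\big((b-c_{Q})_{\sigma'}g\big)(x),
\]
valid for every boundedly supported $g$. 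Applying it to $g=f\chi_{\alpha Q}$ and to $g=f\chi_{\alpha Q_{j}}$ for a disjoint family $\{Q_{j}\}\subset\mathcal{D}(Q)$, using the linearity of $T_{{\bf b}}$ in the function and the inclusions $\alpha Q_{j}\subset\alpha Q$ (which hold since each $Q_{j}$ is a strict subcube of $Q$ and $\alpha\ge 3c_{d}^{2}/\delta$), one finds
\begin{align*}
&T_{{\bf b}}(f\chi_{\alpha Q})(x)\chi_{Q}(x)-\sum_{j}T_{{\bf b}}(f\chi_{\alpha Q_{j}})(x)\chi_{Q_{j}}(x)\\
&\qquad=\sum_{h=0}^{m}\sum_{\sigma\in C_{h}({\bf b})}(-1)^{m-h}(b(x)-c_{Q})_{\sigma}\,R_{\sigma}(x),
\end{align*}
where $R_{\sigma}(x):=T(f_{\sigma'}\chi_{\alpha Q})(x)\,\chi_{Q\setminus\bigcup_{j}Q_{j}}(x)+\sum_{j}T\big(f_{\sigma'}\chi_{\alpha Q\setminus\alpha Q_{j}}\big)(x)\,\chi_{Q_{j}}(x)$. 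It therefore suffices to produce \emph{one} family $\{Q_{j}\}$ with $\sum_{j}\mu(Q_{j})\le\varepsilon\mu(Q)$ such that, for every $\sigma$, $|T(f_{\sigma'}\chi_{\alpha Q})(x)|\le\kappa_{\xi,\varepsilon,C}\|f_{\sigma'}\|_{C,\alpha Q}$ a.e.\ on $Q\setminus\bigcup_{j}Q_{j}$ and $|T(f_{\sigma'}\chi_{\alpha Q\setminus\alpha Q_{j}})(x)|\le\kappa_{\xi,\varepsilon,C}\|f_{\sigma'}\|_{C,\alpha Q}$ a.e.\ on $Q_{j}$; then $|R_{\sigma}(x)|\le\kappa_{\xi,\varepsilon,C}\|f|b-c_{Q}|_{\sigma'}\|_{C,\alpha Q}\chi_{Q}(x)$ and the claimed bound follows.

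\emph{The exceptional set and the stopping cubes.} Fix $\rho$ comparable to $\varepsilon$ (its precise value $\varepsilon/(3c_{1}c_{2})$ being dictated by the constants below). Since $\alpha Q$ is a ball, first choose $\widehat{Q}\in\mathcal{D}$ with $\alpha Q\subset\widehat{Q}$ and $\mu(\widehat{Q})\le c_{2}\mu(Q)$ (by doubling); then $T(h\chi_{\widehat{Q}})=T(h)$ for $h$ supported in $\alpha Q$ and $\|h\|_{A,\widehat{Q}}\le\|h\|_{A,\alpha Q}\le\|h\|_{C,\alpha Q}$ because $C=\max(A,B)$. Applying the two hypotheses on $\widehat{Q}$ to the functions $f_{\sigma'}\chi_{\alpha Q}$, and keeping the weak $(C,C)$ bound $\mu(\{M_{C}h>t\})\le\|M_{C}\|\int C(|h|/t)$ in reserve, set
\begin{align*}
E:=\bigcup_{h=0}^{m}\bigcup_{\sigma\in C_{h}({\bf b})}\Big(
&\{x\in Q:|T(f_{\sigma'}\chi_{\alpha Q})(x)|>\psi(\rho)\|f_{\sigma'}\|_{C,\alpha Q}\}\\
&\cup\{x\in Q:\mathcal{M}^{\#}_{T,\alpha}(f_{\sigma'}\chi_{\alpha Q})(x)>\phi(\rho)\|f_{\sigma'}\|_{C,\alpha Q}\}\Big).
\end{align*}
Each of the finitely many sets in this union has measure $\le c_{2}\rho\,\mu(Q)$, so $\mu(E)\le\tfrac{\varepsilon}{2}\mu(Q)$ for the appropriate $\rho$. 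Let $\{Q_{j}\}$ be the maximal cubes of $\mathcal{D}(Q)$ with $\mu(Q_{j}\cap E)>\tfrac12\mu(Q_{j})$. They are pairwise disjoint, cover $E\cap Q$ up to a null set, satisfy $\sum_{j}\mu(Q_{j})\le2\mu(E)\le\varepsilon\mu(Q)$, their $\mathcal{D}$-parents obey $\mu(\widehat{Q_{j}}\setminus E)\ge\tfrac12\mu(Q_{j})$, and — by the quasi-metric together with $\alpha\ge3c_{d}^{2}/\delta$ — $\widehat{Q_{j}}\subset\alpha Q_{j}\subset\alpha Q$. (These are the cubes $P_{j}$ of the statement.)

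\emph{The pointwise estimates.} On $Q\setminus\bigcup_{j}Q_{j}$ every point is off $E$, hence $|T(f_{\sigma'}\chi_{\alpha Q})(x)|\le\psi(\rho)\|f_{\sigma'}\|_{C,\alpha Q}\le\kappa_{\xi,\varepsilon,C}\|f_{\sigma'}\|_{C,\alpha Q}$. Now fix $x\in Q_{j}$, pick $\bar x_{j}\in\widehat{Q_{j}}\setminus E$, and choose a ball $B\ni x,\bar x_{j}$ with $\alpha Q_{j}\subset\alpha B$ and $\mu(\alpha B)\le c_{1}\mu(Q_{j})$ (possible because $\widehat{Q_{j}}\subset\alpha Q_{j}$, with $c_{1}$ depending on $\mathcal{D}$). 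Using $\alpha Q_{j}\subset\alpha B\subset\alpha Q$,
\[
T(f_{\sigma'}\chi_{\alpha Q\setminus\alpha Q_{j}})(x)=T\big((f_{\sigma'}\chi_{\alpha Q})\chi_{X\setminus\alpha B}\big)(x)+T\big(f_{\sigma'}\chi_{\alpha B\setminus\alpha Q_{j}}\big)(x),
\]
and for the first summand
\begin{align*}
\big|T\big((f_{\sigma'}\chi_{\alpha Q})\chi_{X\setminus\alpha B}\big)(x)\big|\le{}&\big|T\big((f_{\sigma'}\chi_{\alpha Q})\chi_{X\setminus\alpha B}\big)(x)-T\big((f_{\sigma'}\chi_{\alpha Q})\chi_{X\setminus\alpha B}\big)(\bar x_{j})\big|\\
&+\big|T(f_{\sigma'}\chi_{\alpha Q})(\bar x_{j})\big|+\big|T(f_{\sigma'}\chi_{\alpha B})(\bar x_{j})\big|.
\end{align*}
The oscillation in the first line is $\le\mathcal{M}^{\#}_{T,\alpha}(f_{\sigma'}\chi_{\alpha Q})(\bar x_{j})\le\phi(\rho)\|f_{\sigma'}\|_{C,\alpha Q}$ (using $x,\bar x_{j}\in B$ and that $(f_{\sigma'}\chi_{\alpha Q})\chi_{X\setminus\alpha B}$ is exactly of the form over which the supremum defining $\mathcal{M}^{\#}_{T,\alpha}(f_{\sigma'}\chi_{\alpha Q})(\bar x_{j})$ runs); the second term is $\le\psi(\rho)\|f_{\sigma'}\|_{C,\alpha Q}$ since $\bar x_{j}\notin E$; and the third term — $T$ of a function supported on the ball $\alpha B$ of measure $\le c_{1}\mu(Q_{j})$, evaluated at the interior point $\bar x_{j}$ — is absorbed by means of the weak $(C,C)$ bound for $M_{C}$ after a routine further refinement of the choice of $\bar x_{j}$ (the weak bound for $T$ on a dyadic cube containing $\alpha B$ removes from $\widehat{Q_{j}}$ only a subset of measure $<\mu(\widehat{Q_{j}}\setminus E)$), and tracking the constants produces the $\xi(1/(3c_{1}))\,\tfrac{\varepsilon}{3c_{1}c_{2}}\,\|M_{C}\|$ summand of $\kappa_{\xi,\varepsilon,C}$. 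The remaining term $T(f_{\sigma'}\chi_{\alpha B\setminus\alpha Q_{j}})(x)$ — $T$ of a function supported on an annulus of measure $\le c_{1}\mu(Q_{j})$ adjacent to $Q_{j}$ — is handled the same way. Collecting everything yields $|T(f_{\sigma'}\chi_{\alpha Q\setminus\alpha Q_{j}})(x)|\le\kappa_{\xi,\varepsilon,C}\|f_{\sigma'}\|_{C,\alpha Q}$ a.e.\ on $Q_{j}$, which together with the estimate off $\bigcup_{j}Q_{j}$ and the identity of the first paragraph proves the lemma.

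\emph{Main obstacle.} The conceptual core — the Lerner--Ombrosi argument and the commutator expansion about the common constants $c_{Q}$ — is standard. The real work is the geometric bookkeeping forced by the quasi-metric and by the fact that the $\mathcal{D}$-dilates are balls, not cubes: arranging $\widehat{Q}\supset\alpha Q$ and a ball $B$ around each stopping cube so that $\widehat{Q_{j}}\subset\alpha Q_{j}\subset\alpha B\subset\alpha Q$, and, above all, estimating the two near-diagonal terms $T(f_{\sigma'}\chi_{\alpha B})(\bar x_{j})$ and $T(f_{\sigma'}\chi_{\alpha B\setminus\alpha Q_{j}})(x)$, which require a second, carefully calibrated use of the weak bounds for $T$ and $M_{C}$. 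Keeping track of how the doubling constants $c_{1},c_{2}$ and the combinatorial factor $\sum_{h}\binom{m}{h}$ enter the admissible $\rho$, and hence $\kappa_{\xi,\varepsilon,C}$, is the only genuinely delicate point.
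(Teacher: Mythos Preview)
Your overall strategy—the commutator expansion about the constants $c_Q$ combined with a Lerner--Ombrosi stopping argument—matches the paper's. The gap is in the stopping construction and its consequence for the pointwise estimate on the selected cubes.

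You take the $Q_j$ to be the \emph{maximal} cubes with $\mu(Q_j\cap E)>\tfrac12\mu(Q_j)$; thus each $Q_j$ is \emph{mostly inside} $E$, and you are forced to seek the good point $\bar x_j$ in the parent $\widehat{Q_j}$. Because $\bar x_j$ need not lie in $1Q_j$, the ball $B$ in your oscillation step must strictly contain $1Q_j$, so $\alpha Q_j\subsetneq\alpha B$ and the annulus term
\[
T\big(f_{\sigma'}\chi_{\alpha B\setminus\alpha Q_j}\big)(x),\qquad x\in Q_j\ \text{arbitrary},
\]
appears. Your phrase ``handled the same way'' does not work: the previous local term $T(f_{\sigma'}\chi_{\alpha B})(\bar x_j)$ was evaluated at a \emph{single good point} that you refined further, whereas here you need a \emph{pointwise} bound for almost every $x\in Q_j$. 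The hypotheses give only weak level-set control of $T$ on dyadic cubes (whose exceptional set can swallow all of $Q_j$), and the oscillation operator $\mathcal{M}^{\#}_{T,\alpha}$ cannot be invoked either, since the function is supported \emph{inside} $\alpha B$. So this term is genuinely uncontrolled in your scheme.

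The paper avoids the annulus entirely by choosing the cubes the other way around: it performs a local Calder\'on--Zygmund decomposition of the enlarged set $\Omega_{c_2}=\{M^{\mathcal{D}(Q)}(\chi_\Omega)>1/c_2\}$, producing cubes $P_j$ with $\mu(P_j\cap\Omega)\le\tfrac12\mu(P_j)$. Then the good point $x'$ lies in $P_j$ itself, one may take $B=1P_j$ so that $\alpha B=\alpha P_j$ exactly, and the annulus term never arises. A second simplification in the paper is that the $M_C$ level set is included in $\Omega$ from the start, which directly yields $\|f_{\sigma'}\|_{A,\alpha P_j}\le M_C(f_{\sigma'}\chi_{\alpha Q})(x')\lesssim\rho\|M_C\|\,\|f_{\sigma'}\|_{C,\alpha Q}$ at any $x'\notin\Omega$, replacing your ``routine further refinement''. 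Adopting this stopping construction (and adding the $M_C$ condition to $E$) repairs your argument with no other changes.
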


\begin{proof}
We shall use the following identity that was obtained in \cite[p. 684]{PTG}
\[
T_{{\bf b}}f(x)=\sum_{h=0}^{m}\sum_{\sigma\in C({\bf b})}(-1)^{m-h}\left(b(x)-{\bf \lambda}\right)_{\sigma'}T\left(\left(b-{\bf \lambda}\right)_{\sigma}f\right)(x)
\]
By the doubling condition of the measure there exists $c_{1}$ such
that $\mu(\alpha P)\leq c_{1}\mu(P)$ for any cube $P$. Now we observe
that for any disjoint family $\{P_{j}\}\subset\mathcal{D}(Q)$ 
\begin{align*}
 & T_{{\bf b}}(f\chi_{\alpha Q})(x)\chi_{Q}(x)\\
 & =T_{{\bf b}}(f\chi_{\alpha Q})(x)\chi_{Q\setminus\cup P_{j}}(x)+\sum_{j}T_{{\bf b}}(f\chi_{\alpha Q\setminus\alpha P_{j}})(x)\chi_{P_{j}}(x)+\sum_{j}T_{{\bf b}}(f\chi_{\alpha P_{j}})(x)\chi_{P_{j}}(x)
\end{align*}
and we have that 
\begin{align*}
 & T_{{\bf b}}(f\chi_{\alpha Q})(x)\chi_{Q}(x)-\sum_{j}T_{{\bf b}}(f\chi_{\alpha P_{j}})(x)\chi_{P_{j}}(x)\\
 & =T_{{\bf b}}(f\chi_{\alpha Q})(x)\chi_{Q\setminus\cup P_{j}}(x)+\sum_{j}T_{{\bf b}}(f\chi_{\alpha Q\setminus\alpha P_{j}})(x)\chi_{P_{j}}(x)\\
 & =\sum_{h=0}^{m}\sum_{\sigma\in C({\bf b})}(-1)^{m-h}\left(b(x)-c_{Q}\right)_{\sigma}T\left(\left(b-c_{Q}\right)_{\sigma'}f\chi_{\alpha Q}\right)(x)\chi_{Q\setminus\cup P_{j}}(x)\\
 & +\sum_{h=0}^{m}\sum_{\sigma\in C({\bf b})}\sum_{j}(-1)^{m-h}\left(b(x)-c_{Q}\right)_{\sigma}T\left(\left(b-c_{Q}\right)_{\sigma'}f\chi_{\alpha Q\setminus\alpha P_{j}}\right)(x)\chi_{P_{j}}(x)
\end{align*}
We claim that there exists a disjoint family $\{P_{j}\}\subset\mathcal{D}(Q)$
with 
\[
\sum_{j}\mu(P_{j})\leq\varepsilon\mu(Q)
\]
and 
\[
\begin{split} & |T_{{\bf b}}(f\chi_{\alpha Q})(x)\chi_{Q\setminus\cup P_{j}}+\sum_{j}T_{{\bf b}}(f\chi_{\alpha Q\setminus\alpha P_{j}})(x)\chi_{P_{j}}|\\
 & \leq\kappa_{n,\rho,s}\sum_{h=0}^{m}\sum_{\sigma\in C_{h}({\bf b})}|b(x)-c_{Q}|_{\sigma}\|f|b-c_{Q}|_{\sigma'}\|_{C,\alpha Q}\chi_{Q}(x)
\end{split}
\]
Note that to settle the claim, we are actually going to show that
\begin{align*}
 & \left|\sum_{h=0}^{m}\sum_{\sigma\in C({\bf b})}(-1)^{m-h}\left(b(x)-c_{Q}\right)_{\sigma}T\left(\left(b-c_{Q}\right)_{\sigma'}f\chi_{\alpha Q}\right)(x)\chi_{Q\setminus\cup P_{j}}(x)\right.\\
 & \left.+\sum_{h=0}^{m}\sum_{\sigma\in C({\bf b})}\sum_{j}(-1)^{m-h}\left(b(x)-c_{Q}\right)_{\sigma}T\left(\left(b-c_{Q}\right)_{\sigma'}f\chi_{\alpha Q\setminus\alpha P_{j}}\right)(x)\chi_{P_{j}}(x)\right|\\
 & \leq\kappa_{n,\rho,s}\sum_{h=0}^{m}\sum_{\sigma\in C_{h}({\bf b})}|b(x)-c_{Q}|_{\sigma}\|f|b-c_{Q}|_{\sigma'}\|_{C,\alpha Q}\chi_{Q}(x)
\end{align*}
For $\rho\in(0,1)$ to be chosen let 
\[
\mathcal{\tilde{M}}_{T}(f)=\max_{\sigma\in C_{h}({\bf b}),\,h=0,\dots,m}\left\{ \frac{|T(|b-c_{Q}|_{\sigma'}f\chi_{\alpha Q})|}{\xi(\rho)\||b-c_{Q}|_{\sigma'}f\|_{A,\alpha Q}},\frac{\mathcal{M}_{T,\alpha}^{\#}(|b-c_{Q}|_{\sigma'}f\chi_{\alpha Q})}{\xi(\rho)\||b-c_{Q}|_{\sigma'}f\|_{B,\alpha Q}}\right\} 
\]
and let us define the sets 
\[
\Omega=\left\{ x\in Q\,:\,\max\left\{ \frac{M_{C}(|b-c_{Q}|_{\sigma'}f\chi_{\alpha Q})(x)}{\rho\|M_{C}\|\||b-c_{Q}|_{\sigma'}f\|_{C,\alpha Q}},\mathcal{\tilde{M}}_{T}(f)\right\} >1\right\} 
\]
Observe that 
\begin{align*}
\begin{split}\mu(\Omega) & \leq\sum_{\sigma\in C_{h}({\bf b}),\,h=0,\dots,m}\mu\left(\left\{ x\in Q\,:\,\frac{M_{C}(|b-c_{Q}|_{\sigma'}f\chi_{\alpha Q})(x)}{\rho\|M_{C}\|\||b-c_{Q}|_{\sigma'}f\|_{C,\alpha Q}}>1\right\} \right)\\
 & +\sum_{\sigma\in C_{h}({\bf b}),\,h=0,\dots,m}\mu\left(\left\{ x\in Q\,:\,\frac{|T(|b-c_{Q}|_{\sigma'}f\chi_{\alpha Q})|}{\xi(\rho)\||b-c_{Q}|_{\sigma'}f\|_{A,\alpha Q}}>1\right\} \right)\\
 & +\sum_{\sigma\in C_{h}({\bf b}),\,h=0,\dots,m}\mu\left(\left\{ x\in Q\,:\,\frac{\mathcal{M}_{T,\alpha}^{\#}(|b-c_{Q}|_{\sigma'}f\chi_{\alpha Q})}{\xi(\rho)\||b-c_{Q}|_{\sigma'}f\|_{B,\alpha Q}}>1\right\} \right)\\
 & \leq3\Gamma_{m}\rho\mu(\alpha Q)\leq3\Gamma_{m}\rho c_{1}\mu(Q)
\end{split}
\end{align*}
where $\Gamma_{k}=\sum_{\sigma\in C_{h}({\bf b}),\,h=0,\dots,m}$.
Now we take the local Calderón-Zygmund decomposition (see \cite[Lemma 4.5]{FN})
of 
\[
\Omega_{c_{2}}=\left\{ s\in Q\,:\,M^{\mathcal{D}(Q)}(\chi_{\Omega})>\frac{1}{c_{2}}\right\} \qquad c_{2}\geq2.
\]
For a suitable choice of $c_{2}$ we have that $\Omega_{c_{2}}$ is
a proper subset of $Q$ and that there exists a family $\{P_{j}\}\subset\mathcal{D}(Q)$
such that $\Omega_{c_{2}}=\bigcup P_{j}$ and 
\[
\frac{1}{c_{2}}\leq\frac{\mu(P_{j}\cap\Omega)}{\mu(P_{j})}\leq\frac{1}{2}.
\]
Then we have that 
\[
\sum_{j}\mu(P_{j})\leq c_{2}\sum_{j}\mu(P_{j}\cap\Omega)\leq c_{2}\mu(\Omega)\leq3\rho c_{1}c_{2}\mu(Q),
\]
and choosing $\rho=\frac{\varepsilon}{3\Gamma_{m}c_{1}c_{2}}$, 
\[
\sum_{j}\mu(P_{j})\leq\varepsilon\mu(Q).
\]

Note that by the Lebesgue differentiation theorem there exists some
set $N$ of measure zero such that 
\begin{equation}
\Omega\setminus N\subset\Omega_{c_{2}}=\bigcup_{j}P_{j}\label{eq:Null-1}
\end{equation}
Now we show that this family $\{P_{j}\}$ is suitable for the claim
above to hold. Taking \eqref{eq:Null-1} into account if $x\in Q\setminus\cup P_{j}$
then the inequalities in $\Omega$ hold reversed a.e. in particular,
\[
\left|T(|b-c_{Q}|_{\sigma'}f\chi_{\alpha Q})\right|\leq\xi(\rho)\||b-c_{Q}|_{\sigma'}f\|_{A,\alpha Q}
\]
Now we deal with each term $T(f\chi_{\alpha Q\setminus\alpha P_{j}})(x)\chi_{P_{j}}(x)$.
First we note that $\mu(P_{j}\setminus\Omega)\not=0$. Indeed 
\[
\mu(P_{j})=\mu(P_{j}\cap\Omega)+\mu(P_{j}\setminus\Omega)\leq\frac{1}{2}\mu(P_{j})+\mu(P_{j}\setminus\Omega)
\]
Then for each $x'\in P_{j}\setminus\Omega$ 
\[
\begin{split} & \left|T\left(\left(b-c_{Q}\right)_{\sigma'}f\chi_{\alpha Q\setminus\alpha P_{j}}\right)(x)\right|\\
 & =\left|T\left(\left(b-c_{Q}\right)_{\sigma'}f\chi_{\alpha Q\setminus\alpha P_{j}}\right)(x)-T\left(\left(b-c_{Q}\right)_{\sigma'}f\chi_{\alpha Q\setminus\alpha P_{j}}\right)(x')\right|\\
 & +\left|T\left(\left(b-c_{Q}\right)_{\sigma'}f\chi_{\alpha Q\setminus\alpha P_{j}}\right)(x')\right|
\end{split}
\]
For $I$ we observe that since $x'\in P_{j}\setminus\Omega$
\begin{align*}
 & \left|T\left(\left(b-c_{Q}\right)_{\sigma'}f\chi_{\alpha Q\setminus\alpha P_{j}}\right)(x)-T\left(\left(b-c_{Q}\right)_{\sigma'}f\chi_{\alpha Q\setminus\alpha P_{j}}\right)(x')\right|\\
 & \leq\mathcal{M}_{T,\alpha}^{\#}(\left(b-c_{Q}\right)_{\sigma'}f\chi_{\alpha Q})(x')\leq\xi(\rho)\||b-c_{Q}|_{\sigma'}f\|_{A,\alpha Q}.
\end{align*}
 Then 
\[
\left|T\left(\left(b-c_{Q}\right)_{\sigma'}f\chi_{\alpha Q\setminus\alpha P_{j}}\right)(x)\right|\leq\xi(\rho)\||b-c_{Q}|_{\sigma'}f\|_{C,\alpha Q}+\inf_{x'\in P_{j}\setminus\Omega}\left|T\left(\left(b-c_{Q}\right)_{\sigma'}f\chi_{\alpha Q\setminus\alpha P_{j}}\right)(x')\right|
\]
For the remaining term we note that 
\[
\left|T\left(\left(b-c_{Q}\right)_{\sigma'}f\chi_{\alpha Q\setminus\alpha P_{j}}\right)(x')\right|\leq\left|T\left(\left(b-c_{Q}\right)_{\sigma'}f\chi_{\alpha Q}\right)(x')\right|+\left|T\left(\left(b-c_{Q}\right)_{\sigma'}f\chi_{\alpha P_{j}}\right)(x')\right|
\]
For the first term, as above 
\[
\left|T\left(\left(b-c_{Q}\right)_{\sigma'}f\chi_{\alpha Q}\right)(x')\right|\leq\xi(\rho)\||b-c_{Q}|_{\sigma'}f\|_{A,\alpha Q}.
\]
Then 
\[
\inf_{x'\in P_{j}\setminus\Omega}\left|T(\left(b-c_{Q}\right)_{\sigma'}f\chi_{\alpha Q\setminus\alpha P_{j}})(x')\right|\leq\xi(\rho)\||b-c_{Q}|_{\sigma'}f\|_{A,\alpha Q}+\inf_{x'\in P_{j}\setminus\Omega}\left|T\left(\left(b-c_{Q}\right)_{\sigma'}f\chi_{\alpha P_{j}}\right)(x')\right|
\]
Now we note that for $t=\frac{1}{3c_{1}}$ 
\[
\begin{split} & \mu\left(\left\{ x\in P_{j}\,:\,\left|T\left((b-c_{Q})_{\sigma'}f\chi_{\alpha P_{j}}\right)(x)\right|>\xi(t)\|(b-c_{Q})_{\sigma'}f\|_{A,\alpha P_{j}}\right\} \right)\\
 & \leq t\mu(\alpha P_{j})\leq\frac{c_{1}}{3c_{1}}\mu(P_{j})=\frac{1}{3}\mu(P_{j})
\end{split}
\]
Then, necessarily 
\[
\begin{split}\inf_{x'\in P_{j}\setminus\Omega}\left|T\left(\left(b-c_{Q}\right)_{\sigma'}f\chi_{\alpha P_{j}}\right)(x')\right| & \leq\xi(t)\|(b-c_{Q})_{\sigma'}f\|_{A,\alpha P_{j}}\\
 & \leq\xi(t)\inf_{x'\in P_{j}\setminus\Omega}M_{C}(\left(b-c_{Q}\right)_{\sigma'}f\chi_{\alpha Q})(x')\\
 & \leq\xi(t)\rho\|M_C\|\|\left(b-c_{Q}\right)_{\sigma'}f\|_{C,\alpha Q}
\end{split}
\]
Indeed. First we recall that since $\frac{\mu(P_{j}\cap\Omega)}{\mu(P_{j})}\leq\frac{1}{2}$,
then
\[
\mu(P_{j})=\mu(P_{j}\cap\Omega)+\mu(P_{j}\setminus\Omega)\leq\frac{1}{2}\mu(P_{j})+\mu(P_{j}\setminus\Omega)
\]
and we have that
\[
\frac{1}{2}\mu(P_{j})\leq\mu(P_{j}\setminus\Omega).
\]
Now we observe that if we had
\[
P_{j}\setminus\Omega\subset\left\{ x\in P_{j}\,:\,\left|T\left((b-c_{Q})_{\sigma'}f\chi_{\alpha P_{j}}\right)(x)\right|>\xi(\rho)\|\left(b-c_{Q}\right)_{\sigma'}f\|_{A,\alpha P_{j}}\right\} 
\]
then 
\[
\begin{split}\frac{1}{2}\mu(P_{j}) & \leq\mu(P_{j}\setminus\Omega)\\
 & \leq\mu\left(\left\{ x\in P_{j}\,:\,\left|T\left((b-c_{Q})_{\sigma'}f\chi_{\alpha P_{j}}\right)(x)\right|>\xi(\rho)\|\left(b-c_{Q}\right)_{\sigma'}f\|_{A,\alpha P_{j}}\right\} \right)\\
 & \leq\frac{1}{3}\mu(P_{j})
\end{split}
\]
which would be a contradiction. Gathering the estimates above the
claim holds and hence we are done.
\end{proof}
 Finally armed with the Lemma above we are in the position to settle
Theorem \ref{thm:ThmSparseComm}. 
\begin{proof}
[Proof of Theorem \ref{thm:ThmSparseComm}] Let $Q$ be a cube.
We iterate Lemma \ref{lem:It-1} and we get $\{Q_{j}^{l}\}$ families
of cubes with 
\[
\sum_{Q_{j}^{l+1}\subset Q_{i}^{l}}\mu(Q_{j}^{l+1})\leq\varepsilon\mu(Q_{i}^{l})
\]
such that 
\[
\begin{split}|T_{{\bf b}}(f\chi_{\alpha Q})(x)|\chi_{Q}(x) & \leq\kappa_{n,\rho,s}\sum_{h=0}^{m}\sum_{\sigma\in C_{h}({\bf b})}\sum_{l=0}^{L-1}\sum_{j}|b(x)-c_{Q_{j}^{l}}|_{\sigma}\|f|b-c_{Q_{j}^{l}}|_{\sigma'}\|_{C,\alpha Q_{j}^{l}}\chi_{Q_{j}^{l}}(x)\\
 & +\sum_{j}T_{{\bf b}}(f\chi_{\alpha Q_{j}^{L}})(x)\chi_{Q_{j}^{L}}(x).
\end{split}
\]
Note that 
\[
\sum_{j}\mu(Q_{j}^{L})\leq\varepsilon^{L}\mu(Q).
\]
Hence, letting $L\rightarrow\infty$ 
\[
|T_{{\bf b}}(f\chi_{\alpha Q})(x)|\chi_{Q}(x)\leq\kappa_{n,\rho,s}\sum_{h=0}^{m}\sum_{\sigma\in C_{h}({\bf b})}\sum_{l=0}^{\infty}\sum_{j}|b(x)-c_{Q_{j}^{l}}|_{\sigma}\|f|b-c_{Q_{j}^{l}}|_{\sigma'}\|_{C,\alpha Q_{j}^{l}}\chi_{Q_{j}^{l}}(x)
\]
and clearly 
\[
\mathcal{S}_{Q}=\bigcup_{j,l}\{Q_{j}^{l}\}
\]
is a $(1-\varepsilon)$-sparse family. Now we use Lemma \ref{lemma:covering}
with $E=\text{supp}(f)$, there exists a partition of $X$, $\mathcal{P}\subset\mathcal{D}$
such that $E\subseteq\alpha Q$ for every $Q\in\mathcal{P}$. Then
\[
T_{{\bf b}}f(x)=\sum_{Q\in\mathcal{D}}T_{{\bf b}}(f\chi_{\alpha Q})(x)\chi_{Q}(x)
\]
and it suffices to apply the estimate above to each term and we are
done just choosing in this case $c_{Q_{j}^{k}}=b_{Q_{j}^{k}}$. Note
that since $\mathcal{D}$ is a partition $\mathcal{S}=\cup_{Q\in\mathcal{D}}\mathcal{S}_{Q}$
is a $(1-\varepsilon)$-sparse family and hence we are done.\\
To prove the furthermore part, we fix the parameters in Proposition
\ref{proposition:dyadicsystem}. Then there exist $\mathcal{D}_{1},\dots,\mathcal{D}_{t_{0}}$
dyadic systems associated to those parameters. We repeat the argument
above for $\mathcal{D}_{1}$ and its parameters. Then there exists
a $(1-\varepsilon)$-sparse family $\mathcal{S}\subset\mathcal{D}_{1}$
such that 
\[
|T_{{\bf b}}f(x)|\lesssim\kappa_{\xi,\varepsilon,C}\sum_{h=0}^{m}\sum_{\sigma\in C_{h}({\bf b})}\sum_{Q\in\mathcal{S}}\||b-c_{Q}|_{\sigma'}f\|_{C,\alpha Q}|b-c_{Q}|_{\sigma}\chi_{Q}(x)
\]
Now by Proposition \ref{proposition:dyadicsystem}, we have that for
any $Q\in\mathcal{S}$ with center $z$ and side length $\delta^{k}$
we can find $Q'\in\mathcal{D}_{j}$ for some $1\leq j\leq m_{0}$
such that 
\[
\alpha Q=B(z,\alpha C_{0}\delta^{k})\subseteq Q'\qquad\text{diam}(Q')\leq\gamma\alpha C_{0}\delta^{k}
\]
then there exists $c>0$ depending on $X$ and $\alpha$ such that
\[
\mu(Q')\leq\mu(B(z,\alpha\gamma C_{0}\delta^{k}))\leq c\mu B(z,C_{0}\delta^{k})\leq c\mu(Q).
\]
Taking $E_{Q'}=E_{Q}$ we have that 
\[
\mu(Q')\leq c\mu(P)\leq\frac{c}{1-\varepsilon}\mu(E_{Q})=\frac{c}{1-\varepsilon}\mu(E_{Q'})
\]
and hence
\[
\mu(Q')\frac{1-\varepsilon}{c}\leq\mu(E_{Q'})
\]
and hence the collections of cubes 
\[
\tilde{\mathcal{S}_{j}}=\left\{ Q'\in\mathcal{D}_{j}\,:\,Q\in\mathcal{S}\right\} 
\]
are $\frac{1-\varepsilon}{c}$-sparse. Gathering the facts above,
at this point we have, choosing $c_{Q}=b_{Q'}$, the following
estimate
\[
|T_{{\bf b}}f(x)|\lesssim c\kappa_{\xi,\varepsilon,C}\sum_{h=0}^{k}\sum_{\sigma\in C_{h}({\bf b})}\sum_{j=1}^{t_{0}}\sum_{Q\in\tilde{\mathcal{S}_{j}}}\||b-b_{Q}|_{\sigma'}f\|_{C,\alpha Q}|b(x)-b_{Q}|_{\sigma}\chi_{Q}(x)
\]
and all we are left to do is to show that we can actually choose $(1-\varepsilon)$-sparse
families. For that purpose, observe that our families $\tilde{\mathcal{S}_{j}}$
are $\frac{c}{1-\varepsilon}$-Carleson. Let $u>c$ an integer. In
virtue of Lemma \ref{LemmaCarleson} we have that we can write $\tilde{\mathcal{S}_{j}}=\cup_{i=1}^u\mathcal{S}_{j}^{i}$
where each $\mathcal{S}_{j}^{i}$ is $1+\frac{\frac{c}{(1-\varepsilon)}-1}{u}$-Carleson.
Note that
\begin{align*}
1+\frac{\frac{c}{1-\varepsilon}-1}{u} &
=1+\frac{c-1+\varepsilon}{u(1-\varepsilon)}
 \leq1+\frac{\varepsilon}{1-\varepsilon}
 =\frac{1}{1-\varepsilon}.
\end{align*}
This yields that, each $\text{\ensuremath{\mathcal{S}_{j}^{i}} is \ensuremath{\frac{1}{1-\varepsilon}}}$-Carleson
and consequently, $(1-\varepsilon)$-sparse. Then we have that
\[
|T_{{\bf b}}f(x)|\lesssim c\kappa_{\xi,\varepsilon,C}\sum_{h=0}^{m}\sum_{\sigma\in C_{h}({\bf b})}\sum_{j=1}^{t_{0}}\sum_{i=1}^{u}\sum_{Q\in\mathcal{S}_{i}^{j}}\||b-b_{Q}|_{\sigma'}f\|_{C,\alpha Q}|b(x)-b_{Q}|_{\sigma}\chi_{Q}(x)
\]
and reindexing, we are done.
\end{proof}

\end{document}